 \newtheorem{thm}{Theorem}[section]
 \newtheorem{cor}[thm]{Corollary}
 \newtheorem{lem}[thm]{Lemma}
 \newtheorem{prop}[thm]{Proposition}
 \theoremstyle{definition}
 \newtheorem{defn}[thm]{Definition}
 \theoremstyle{remark}
 \newtheorem*{ex}{Example}
 \numberwithin{equation}{section}
\begin{document}

%
%
%
%
%
%
%
%
%

\title[Double bases from generalized Faber polynomials]
 {Double bases from generalized Faber polynomials  with complex-valued coefficients in weighted Lebesgue spaces}

\author[B.T. Bilalov]{B.T. Bilalov}

\address{%
Department of "Non-harmonic Analysis"\\
Institute of Mathematics and Mechanics of NAS of Azerbaijan\\
B.Vahabzade 9, AZ1141\\
Baku, Azerbaijan}

\email{b\_bilalov@mail.ru}

\thanks{This research was supported by the Azerbaijan National Academy of Sciences under the program "Approximation by neural networks and some problems of frames", 2017.}
\author{A.A. Huseynli}
\address{$^a$Department of Mathematics, Khazar University, AZ1096, Baku, Azerbaijan\br
$^b$Department of Non-harmonic analysis", Institute of Mathematics and Mechanics of NAS of Azerbaijan\br
AZ1141, Baku, Azerbaijan}
\email{alihuseynli@gmail.com}
\author{S.R. Sadigova}
\address{Department of "Non-harmonic Analysis"\br
Institute of Mathematics and Mechanics of NAS of Azerbaijan\br
B.Vahabzade 9, AZ1141\br
Baku, Azerbaijan}
\email{s\_sadigova@mail.ru}
\subjclass{30D55; 41A58; 42C15}

\keywords{Faber polynomials, Smirnov classes, weight, basisness }

\date{October 2, 2018}

\begin{abstract}
In the paper it is considered the generalized Faber polynomials defined inside and outside a regular curve on the complex plane. The weighted Smirnov spaces corresponding to bounded and unbounded regions are defined. It is proved that the generalized Faber polynomials forms a basis in weighted Smirnov spaces, if the weight function satisfies the Muckenhoupt condition on the regular curve. The double system of generalized Faber polynomials with complex-valued coefficients is also considered and the basis properties of such a system in weighted Lebesgue spaces over regular curves are studied.
\end{abstract}

\maketitle
\section{Introduction}

The Faber polynomials were introduced in 1903 by G. Faber in connection with applications of approximation on complex plane. The detailed information about these problems one may consult the monograph by V. I. Smirnov and N.A. Lebedev \cite{1}, also D. Gayer \cite{2}.  They replace polynomials of the variable $z$  in a circle with respect to simply-connected domains. These polynomials play an important role in the problems of approximation on the complex plane and in the theory of conformal mappings. Series of classical Faber polynomials have been investigated enough well, and the results obtained here were comprehensively covered in \cite{1}. The $L_{p} $  case of the classical results was initiated to be studied by V. Kokilashvili in \cite{3}, where generalized Faber polynomials were introduced. The connection between generalized Faber and classical Faber polynomials was studied by J.E. Andersson in \cite{4}. For some other aspects of approximation by Faber polynomials, see \cite{2} and \cite{40}. Note that the basisness problem of the system of generalized Faber polynomials in the Lebesgue spaces of functions defined on  rectifiable closed Jordan curves was studied by B.T. Bilalov and T.I. Najafov in \cite{5}. The degree of the approximation by generalized Faber polynomials in Smirnov's spaces was investigated by \cite{6}.

 To study the basisness of generalized Faber polynomials we will use the method of Riemann-Hilbert boundary value problems for analytic functions. This idea takes its origins from the note \cite{7}  of A. V. Bitsadze. The method was successfully used by S. Ponomarev \cite{8,9} and E. I. Moiseev \cite{10,11,12,13,14,15,16} to solve mixed type PDEs on special regions by the method of Fourier, and also to prove the criteria which guarantees the basisness of a trigonometric system with linear phase in Lebesgue spaces. The further development of this method to study the basisness (completeness, minimality and basisness) problems of special system of functions can be found in the work of B.T.Bilalov \cite{17,18,19,20,21,22}. This method is still intensively developing, the boundary value problems and basisness problems are studied in various function spaces (see, e.g. \cite{23,24,25,26,27,28,29,30}). To establish the basisness of the double system of generalized Faber polynomials in weighted Lebesgue spaces we heavily use the Riemann boundary  value problems posed in weighted Smirnov spaces. These problems were studied in \cite{31}. Note that similar problems in various formulation were studied in \cite{32,33,34,35,36}.

The present  paper considers generalized Faber polynomials defined inside and outside a regular curve on the complex plane. The weighted Smirnov spaces corresponding to bounded and unbounded regions are defined. It is proved that the generalized Faber polynomials forms a basis in weighted Smirnov spaces, if the weight function satisfies the  Muckenhoupt condition on the regular curve. It is also considered the double system of generalized Faber polynomials with complex-valued coefficients and the basisness of these systems in weighted Lebesgue spaces over regular curves are studied. It should be noted that in the weightless case these problems were previously studied in \cite{5} and here we will follow the scheme of this work.

\section{Preliminaries}

 We give general notations and some definitions from the approximation theory and the theory of singular integral operators, which we will use through. By $O_{r} \left(z\right)$ we denote a disc with the radius $r$ and the center $z_{0} $ in the complex plane: $O_{r} \left(z_{0} \right)\equiv \left\{z\in \mathbb C:\, \left|z-z_{0} \right|<r\, \right\}$ ($\mathbb C$ is the complex plane); $\left|M\right|$ denotes the arc measure of  the set $M\subset \Gamma $, where $\Gamma \subset \mathbb C$ is a rectifiable curve. Denote $\omega =O_{1} \left(0\right)$ and $T=\partial \omega $.  We will also use the following standard  notation.  $\mathbb Z_{+} =\left\{0\right\}\cup \mathbb N$; $\mathbb Z$ is the set of all integers. 

\begin{defn}\label{D2.1} A  rectifiable Jordan curve $\Gamma $ in the complex plane is called Carleson or regular if 
\[{\mathop{\sup }\limits_{z\in \Gamma }} \left|\Gamma \cap O_{r} \left(z\right)\right|\le cr\, \, ,\, \, \, \forall r>0\, ,\] 
where $c$ is a constant, independent of $r$.
\end{defn}
More details on this concept can be found in \cite{6,31,32}. 

 Let $ \Gamma$ be a rectifiable Jordan curve and $\rho \left(\cdot \right)$ be a positive function defined a.e. on $ \Gamma$.

\begin{defn}\label{D2.2} The function $\rho \left(\cdot \right)$ is said to belong to the Muckenhoupt class $A_{p} \left(\Gamma \right)$ ($p>1$) on the curve $\Gamma $ if 
\[{\mathop{\sup }\limits_{z\in \Gamma }} {\mathop{\sup }\limits_{r>0}} \left(\frac{1}{r} \int _{\Gamma \cap O_{r} \left(z\right)}\rho \left(\xi \right)\, \left|d\xi \right| \right)\, \left(\frac{1}{r} \int _{\Gamma \cap O_{r} \left(z\right)}\left|\rho \left(\xi \right)\right|^{-\frac{1}{p-1} } \, \left|d\xi \right| \right)^{p-1} <+\infty .\] 
\end{defn}
Let us give the definition of the generalized $p$-Faber polynomials   $F_{p,n}^{+} $ and $F_{p,n}^{-} $  (see \cite{3,6}). Let $D^{+} $ be a bounded region with the boundary $\Gamma $ and  the simple-connected complement $D^{-} =C\backslash \bar{D}^{+} $ ($\bar{D}^{+} $ is the closure of $D^{+} $). Let $w=\varphi \left(z\right)$ be a single-valued conformal mapping the region $D^{-} $ on $C\backslash \overline{O_{1} \left(0\right)}\equiv O_{1}^{-} \left(0\right):\, \varphi \left(\infty \right)=\infty \, ,\, \, \varphi '\left(\infty \right)=\gamma >0$. $\varphi \left(z\right)$ is the sum of its Laurent series at $z=\infty $:
\[\varphi \left(z\right)=\gamma \, z+\gamma _{0} +\gamma _{1} z^{-1} +...\, \, .\] 
Let us take the analytic branch of $\sqrt[{p}]{\varphi '\left(z\right)} $ for which $\sqrt[{p}]{\varphi '\left(\infty \right)} >0$. By $F_{p,n}^{+} $ we denote the principal part of the Laurent series of $\left[\varphi \left(z\right)\right]^{n} \, \sqrt[{p}]{\varphi '\left(z\right)} $ at $z=\infty $:
\[\left[\varphi \left(z\right)\right]^{n} \, \sqrt[{p}]{\varphi '\left(z\right)} \equiv F_{p,n}^{+} \left(z\right)+E_{p,n}^{+} \left(z\right),\, \, z\in D^{-} ,\] 
where $E_{p,n}^{+} \left(\infty \right)=0$.  Here we take $F_{p,0}^{+} \equiv 1$.

Similarly, the  $F_{p,n}^{-} $ $p$-Faber polynomial  corresponding to the mapping $\psi \left(\cdot \right)$ is defined.  Now, let $D^{+} $ be a bounded simply-connected region, containing $z=0$ and  $w=\psi \left(z\right)$ conformal and single-valued function mapping $D^{+} $ on $O_{1}^{-} \left(0\right)\, :\, \psi \left(0\right)=\infty $, ${\mathop{\lim }\limits_{z\to 0}} z\psi \left(z\right)=\alpha >0$. The function $\psi \left(z\right)$ has a Laurent expansion at $z=0$:
\[\psi \left(z\right)=\alpha \, z^{-1} +\alpha _{0} +\alpha _{1} z+...\, \, .\] 
It is clear that the point $z=0$ is a pole of $\left[\psi \left(z\right)\right]^{n-\frac{2}{p} } $ $\sqrt[{p}]{\psi '\left(z\right)} $ of order $n$ and thus 
\[\left[\psi \left(z\right)\right]^{n-\frac{2}{p} } \sqrt[{p}]{\psi '\left(z\right)} =F_{p,n}^{-} \left(z^{-1} \right)+E_{p,n}^{-} \left(z\right),\] 
where $F_{p,n}^{-} \left(z^{-1} \right)$ is the principal part of the series

\[F_{p,n}^{-} \left(z^{-1} \right)=\alpha _{n}^{\left(n\right)} z^{-n} +\alpha _{n-1}^{\left(n\right)} z^{-n+1} +...+\alpha _{1}^{\left(n\right)} z^{-1} \, \, .\]

In what follows, we denote by 
 $\varphi_{-1}:O_{1}^{-}\left(0\right)\rightarrow D^{-}$  and   $\psi_{-1}:O_{1}^{-}\left(0\right)\rightarrow D^{+}$,  the functions inverse to  $\varphi \left(\cdot\right)$ and $\psi \left(\cdot\right)$, respectively. 

 $L_{p,\rho} \left(\Gamma \right)$  is the usual weighted Lebesgue space equipped with the norm $\left\| \cdot \right\| _{p,\rho } $:
\[\left\| f\right\| _{L_{p, \rho} \left(\Gamma \right)} =\left(\int _{\Gamma}\left|f\left(\xi \right)\right|^{p} \rho \left(\xi \right)\left|d\xi \right| \right)^{\frac{1}{p} } .\] 

Consider the Cauchy singular integral operator $S_{\Gamma} $:
\[S_{\Gamma} \left(f\right)=\frac{1}{2\pi i} \int _{\Gamma}\frac{f\left(\xi \right)}{\xi -\tau }  d\xi \, ,\, \, \tau \in \Gamma.\] 

The following theorem is valid.

\begin{thm}\label{D}   $S_{\Gamma} $ is bounded in $L_{p} \left(\Gamma\right)$, $1<p<+\infty $, if and only if $\Gamma $ is a regular curve. Furthermore, if $ \Gamma$ is a regular curve then $S_{\Gamma} $ is bounded in $L_{p, \rho} \left(\Gamma \right)$, $1<p<+\infty $, if and only if $\rho \in A_{p} \left(\Gamma\right)$.
\end{thm}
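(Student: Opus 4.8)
The plan is to regard $S_{\Gamma}$ as a Calderón--Zygmund singular integral operator acting on the metric measure space $\left(\Gamma, \left|d\xi\right|\right)$ and to split the four implications of the statement into one deep analytic input and three comparatively soft arguments. Concretely, I would reduce the two ``if'' directions to a single $L_{2}$-boundedness result plus standard Calderón--Zygmund machinery, and handle the two ``only if'' directions by testing the operator against characteristic functions of small arcs.

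First I would observe that the regularity condition of Definition \ref{D2.1} is precisely the assertion that $\Gamma$, equipped with arc-length measure and the Euclidean distance restricted to $\Gamma$, is a \emph{space of homogeneous type} (Ahlfors $1$-regular): the arc $\Gamma \cap O_{r}\left(z\right)$ has measure comparable to $r$ uniformly in $z$ and $r$. The Cauchy kernel $\left(\xi-\tau\right)^{-1}$ is then a standard Calderón--Zygmund kernel with respect to this measure, satisfying the size bound $\left|\xi-\tau\right|^{-1}$ together with the usual Hörmander smoothness estimate. Hence the whole Calderón--Zygmund apparatus becomes available \emph{once} boundedness of $S_{\Gamma}$ on a single exponent, say $L_{2}\left(\Gamma\right)$, is secured.

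The hard part will be exactly this $L_{2}$-boundedness on an arbitrary regular curve. For Lipschitz graphs it is the Calderón--Coifman--McIntosh--Meyer theorem; its extension to general Carleson curves is due to G.~David. I would obtain it either from the $T(b)$ theorem with $b$ adapted to the curve, or by the rotation method reducing to the Lipschitz case together with a stopping-time/Carleson decomposition; either route yields $\left\|S_{\Gamma}f\right\|_{L_{2}\left(\Gamma\right)} \le C\left\|f\right\|_{L_{2}\left(\Gamma\right)}$ with $C$ depending only on the Carleson constant. Granting this, Calderón--Zygmund theory on spaces of homogeneous type gives the weak-$\left(1,1\right)$ estimate and, via Marcinkiewicz interpolation and duality, boundedness on $L_{p}\left(\Gamma\right)$ for every $1<p<+\infty$, which is the ``if'' direction of the first assertion. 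For the converse, assuming $S_{\Gamma}$ bounded on some $L_{p}\left(\Gamma\right)$, I would test on $f=\chi_{\Gamma \cap O_{r}\left(z\right)}$ and bound $S_{\Gamma}f$ from below on a comparable arc; the boundedness inequality then forces $\left|\Gamma \cap O_{r}\left(z\right)\right| \le c\,r$, i.e.\ regularity.

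Finally, fixing a regular $\Gamma$, I would treat the weighted statement by the $A_{p}$ machinery transplanted to the homogeneous space $\left(\Gamma, \left|d\xi\right|\right)$. For sufficiency, since $S_{\Gamma}$ is by now a Calderón--Zygmund operator bounded on $L_{2}\left(\Gamma\right)$, the Coifman--Fefferman good-$\lambda$ inequality (the Hunt--Muckenhoupt--Wheeden theory in its homogeneous-space form) shows that $\rho \in A_{p}\left(\Gamma\right)$ implies boundedness of $S_{\Gamma}$ on $L_{p,\rho}\left(\Gamma\right)$. For necessity, I would once more test on $\chi_{\Gamma \cap O_{r}\left(z\right)}$ and its dual counterpart: boundedness of $S_{\Gamma}$ on $L_{p,\rho}\left(\Gamma\right)$ keeps the product of the two averages appearing in Definition \ref{D2.2} uniformly bounded, which is exactly the $A_{p}\left(\Gamma\right)$ condition. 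Assembling these four pieces yields the theorem, the sole genuinely deep ingredient being David's $L_{2}$ bound in the third step.
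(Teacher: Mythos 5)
The first thing to note is that the paper does not prove Theorem \ref{D} at all: it is quoted as known, with the references \cite{41,42,43} (Dynkin's surveys and G.~David's theorem on the Cauchy integral over Carleson curves). Your skeleton — regularity makes $\left(\Gamma,\left|d\xi\right|\right)$ an Ahlfors $1$-regular space of homogeneous type, David's $L_{2}$ bound is the one deep input, Calder\'on--Zygmund theory plus interpolation and duality give all $1<p<+\infty$, and the weighted sufficiency follows from the good-$\lambda$ / Hunt--Muckenhoupt--Wheeden machinery transplanted to homogeneous spaces — is precisely the architecture of the cited literature, so the sufficiency halves of your plan are sound (one small mislabel: David's reduction of Carleson curves to Lipschitz graphs is a ``big pieces'' corona-type construction with a good-$\lambda$ comparison, not the rotation method, which is for odd kernels in $\mathbb{R}^{n}$).

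There is, however, a genuine gap in both of your ``only if'' steps as written. You propose to test on $f=\chi_{\gamma}$, $\gamma=\Gamma\cap O_{r}\left(z\right)$, and ``bound $S_{\Gamma}f$ from below on a comparable arc''; but $S_{\Gamma}$ integrates against the \emph{complex} differential $d\xi$, so for $\tau\notin\gamma$ one has $S_{\Gamma}\chi_{\gamma}\left(\tau\right)=\frac{1}{2\pi i}\log\frac{b-\tau }{a-\tau }$ (continuous branch along the curve), where $a,b$ are the endpoints of $\gamma$. This depends only on the endpoints and the winding, not on $\left|\gamma\right|$: a comb-like curve packing length $\left|\gamma\right|\gg r$ into $O_{r}\left(z\right)$ by back-and-forth oscillation, with no net winding, gives $\left|S_{\Gamma}\chi_{\gamma}\right|=O\left(1\right)$ while $\left\|\chi_{\gamma}\right\|_{L_{p}\left(\Gamma\right)}^{p}=\left|\gamma\right|$, so no inequality of the form $\left|\Gamma\cap O_{r}\left(z\right)\right|\le c\,r$ can be extracted from this test; the same phase cancellation defeats your necessity argument for $A_{p}\left(\Gamma\right)$. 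The standard repair is to test instead on $f=\chi_{\gamma}\,\overline{\dot{z}}$ (conjugate unit tangent; in the weighted case $f=\chi_{\gamma}\,\overline{\dot{z}}\,\rho^{-\frac{1}{p-1}}$), which converts $d\xi$ into arc length $\left|d\xi\right|$ without changing any of the norms involved, and to evaluate on an arc $\gamma'\subset\Gamma\cap O_{r/4}\left(\zeta\right)$ with $\zeta\in\Gamma$, $\left|\zeta-z\right|\approx 10r$: connectivity of the rectifiable Jordan curve guarantees such a $\zeta$ (and gives $\left|\gamma'\right|\gtrsim r$, which is also the lower Ahlfors bound you used tacitly in step one), and at that separation the directions $\xi-\tau$, $\xi\in\gamma$, $\tau\in\gamma'$, lie in a cone of small aperture, so $\left|S_{\Gamma}f\left(\tau\right)\right|\gtrsim\left|\gamma\right|/r$ on $\gamma'$ and the testing argument closes. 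Without this unimodular correction, the necessity directions — which are the only parts of the theorem you attempt in full rather than delegate to David — simply fail.
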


For these and related results see, for example, \cite{41,42,43}.

Many facts and concepts we need are given in \cite{5}.  Let us give this information for the sake of ease of reading. We will use some facts on basisness of the classical system of exponents and its part in the weighted Lebesgue and Hardy spaces. Recall the definition of Hardy classes and its weighted counterpart. 

The definitions of the classical Hardy classes $H_{p}^{+} $ and $_{m}H_{p}^{-} $ of analytic functions  inside and outside the unit circle, respectively, can be found in  \cite{5}. Now we define the weighted counterparts of the Hardy classes. Let
\[\tilde{H}^{+} \equiv \left\{f\in H_{1}^{+} :\, f^{+} \in L_{p,\, \nu ^{+} } \right\},\] 
where $H_{1}^{\pm } $ are Hardy classes of functions defined inside and outside of the unit disc, respectively, $\nu ^{+} \left(\cdot \right)$ is a weight function defined on $\left[-\pi ,\pi \right]$, $L_{p,\, \nu ^{+} } $ is a weighted Lebesgue space on $\left(-\pi ,\, \pi \right)$, $f^{+} \left(e^{it} \right)$ is the non-tangential boundary value of $f\in H_{1}^{+} $. Equip $\tilde{H}^{+} $ with the following norm
\begin{equation} \label{GrindEQ__2_1_} 
\left\| f\right\| _{\tilde{H}^{+} } \equiv \left\| f^{+} \left(e^{it} \right)\right\| _{p,\, \nu ^{+} },              
\end{equation} 
where $\left\| \, \cdot \, \right\| _{p,\, \nu ^{+} } $ is the norm of $L_{p,\, \nu ^{+} } $:
\[\left\| f\right\| _{p,\nu ^{+} } =\left(\int _{-\pi }^{\pi }\left|f(t\right|^{p} \nu ^{+} \left(t\right)dt \right)^{\frac{1}{p} },\]
and the corresponding space denote by $H_{p,\nu}^{+}.$ 

It is easy to prove the following 

\begin{prop}\label{P2.1} Let $\left|\nu ^{+} \right|^{-\frac{p}{q} } \in L_{1} ,$ $\frac{1}{p} +\frac{1}{q} =1,\, $ $1\le q<+\infty $. Then the space $H_{q,\, \nu ^{+} }^{+} $  is a Banach space.
\end{prop}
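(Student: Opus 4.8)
The plan is to verify completeness directly, since the fact that the expression in \eqref{GrindEQ__2_1_} (with $p$ replaced by $q$) defines a norm is essentially inherited from $L_{q,\,\nu^{+}}$. First I would check that it is a genuine norm: homogeneity and the triangle inequality follow at once from the corresponding properties of the $L_{q,\,\nu^{+}}$ norm, while positive definiteness is the only nonroutine point. If $\left\|f\right\|_{H_{q,\,\nu^{+}}^{+}}=0$, then $\int_{-\pi}^{\pi}\left|f^{+}\left(e^{it}\right)\right|^{q}\nu^{+}\left(t\right)\,dt=0$, and since $\nu^{+}$ is positive a.e. this forces $f^{+}=0$ a.e.; by the classical uniqueness property of $H_{1}^{+}$ (a function in $H_{1}^{+}$ whose nontangential boundary values vanish on a set of positive measure is identically zero) we conclude $f\equiv 0$.

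The heart of the argument is completeness. Let $\left\{f_{n}\right\}\subset H_{q,\,\nu^{+}}^{+}$ be a Cauchy sequence. By the definition of the norm, $\left\{f_{n}^{+}\right\}$ is Cauchy in $L_{q,\,\nu^{+}}$, which is complete, so there exists $g\in L_{q,\,\nu^{+}}$ with $f_{n}^{+}\to g$ in $L_{q,\,\nu^{+}}$. The task is to produce $f\in H_{1}^{+}$ whose boundary function is $g$ and which lies in $H_{q,\,\nu^{+}}^{+}$.

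The key step, and where the hypothesis $\left|\nu^{+}\right|^{-p/q}\in L_{1}$ enters, is to show that convergence in $L_{q,\,\nu^{+}}$ implies convergence in the unweighted $L_{1}$. For any $h\in L_{q,\,\nu^{+}}$, writing $\left|h\right|=\left|h\right|\left(\nu^{+}\right)^{1/q}\left(\nu^{+}\right)^{-1/q}$ and applying Hölder's inequality with exponents $q$ and $p$ gives
\[
\int_{-\pi}^{\pi}\left|h\left(t\right)\right|\,dt\le\left(\int_{-\pi}^{\pi}\left|h\left(t\right)\right|^{q}\nu^{+}\left(t\right)\,dt\right)^{1/q}\left(\int_{-\pi}^{\pi}\left|\nu^{+}\left(t\right)\right|^{-p/q}\,dt\right)^{1/p},
\]
so $\left\|h\right\|_{L_{1}}\le C\left\|h\right\|_{q,\,\nu^{+}}$ with $C=\left(\int_{-\pi}^{\pi}\left|\nu^{+}\right|^{-p/q}\,dt\right)^{1/p}<\infty$ by hypothesis. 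Hence the embedding $L_{q,\,\nu^{+}}\hookrightarrow L_{1}$ is continuous and $f_{n}^{+}\to g$ in $L_{1}$ as well.

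Finally I would use that the boundary-value map identifies $H_{1}^{+}$ with a closed subspace of $L_{1}\left(T\right)$, namely the functions whose Fourier coefficients of negative index vanish; this subspace is closed because each Fourier coefficient is a continuous linear functional on $L_{1}$. Since $f_{n}^{+}\to g$ in $L_{1}$ and every $f_{n}^{+}$ lies in this subspace, $g$ also has vanishing negative Fourier coefficients, so $g=f^{+}$ for a unique $f\in H_{1}^{+}$. Then $f^{+}=g\in L_{q,\,\nu^{+}}$ shows $f\in H_{q,\,\nu^{+}}^{+}$, and $\left\|f_{n}-f\right\|_{H_{q,\,\nu^{+}}^{+}}=\left\|f_{n}^{+}-g\right\|_{q,\,\nu^{+}}\to 0$, which proves completeness. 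The main obstacle is precisely this transfer of limits: one must guarantee that the $L_{q,\,\nu^{+}}$-limit of boundary values is again the trace of an $H_{1}^{+}$ function, and it is exactly the integrability condition on the weight that makes the passage to $L_{1}$, where $H_{1}^{+}$ is known to be closed, possible.
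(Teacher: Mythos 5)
Your proof is correct, and its skeleton coincides with the argument the paper uses for the analogous statement (the paper itself dismisses Proposition \ref{P2.1} as easy and only writes out the proof of the Smirnov-class counterpart, Lemma \ref{L3.1}): take a Cauchy sequence, pass to the limit $g$ in the complete weighted Lebesgue space, and then use H\"older's inequality with the hypothesis $\left|\nu^{+}\right|^{-p/q}\in L_{1}$ --- exactly as the paper uses $\rho^{-q/p}\in L_{1}$ --- to upgrade weighted convergence of the boundary values to convergence in unweighted $L_{1}$. The one place you genuinely diverge is the final identification step: the paper invokes completeness of the unweighted class ($E_{1}(D^{+})$ there, $H_{1}^{+}$ here) as a known fact, obtains a limit $f$ in that class, and matches $f^{+}=g$ a.e.\ by comparing the two $L_{1}$ limits, whereas you instead characterize the trace space of $H_{1}^{+}$ inside $L_{1}(T)$ as the subspace with vanishing negative Fourier coefficients, note that this subspace is closed because Fourier coefficients are continuous functionals on $L_{1}$, and conclude $g=f^{+}$ directly. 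Your variant is slightly more self-contained in the disc setting --- it effectively reproves the completeness fact the paper cites --- at the cost of using the specific Fourier-analytic description of $H^{1}$ boundary values, which does not transport verbatim to the Smirnov classes over a general curve treated in Lemma \ref{L3.1}; the paper's route, by contrast, works uniformly in both settings. One cosmetic remark: at the endpoint $q=1$ (so $p=\infty$) your H\"older step degenerates, but the hypothesis $\left|\nu^{+}\right|^{-p/q}\in L_{1}$ itself does not parse there either, so this is a defect of the statement as printed rather than of your argument, which is sound for $1<q<+\infty$.
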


 Let ${}_{m} H_{1}^{-} $ be the Hardy class of  the functions, which are analytic outside the unit disc and has a zero of order not greater than $m$ at infinity. Let
\[{}_{m}H_{q,\, \nu ^{-} }^{-} \equiv \left\{f\in _{m} H_{1}^{-} :\, f^{-} \left(e^{it} \right)\in L_{q,\, \nu ^{-} } \right\},\] 
where $\nu ^{-} $ is a weighted function on $\left[-\pi ,\pi \right]$. The following proposition is the analog of  the above one. 

\begin{prop}\label{P2.2} If $\left|\nu ^{-} \right|^{-\frac{p}{q} } \in L_{1} $, $\frac{1}{p} +\frac{1}{q} =1$, $1\le q<+\infty $ then ${}_{m}H_{q,\, \nu ^{-} }^{-}$ is a Banach space with respect to the norm
\[\left\| f\right\| _{{}_{m}H_{q,\, \nu ^{-} }^{-} } \equiv \left\| f^{+} \left(e^{it} \right)\right\| _{q,\, \nu ^{-} } .\] 
\end{prop}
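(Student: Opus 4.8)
The plan is to mirror the argument behind Proposition \ref{P2.1}, the only genuinely new ingredients being the boundary uniqueness theorem for the class ${}_m H_1^-$ and a continuous embedding of the weighted space into $L_1$. First I would verify that $\left\|\cdot\right\|_{{}_m H_{q,\,\nu^-}^-}$ is a norm. Homogeneity and the triangle inequality are inherited directly from the norm of $L_{q,\,\nu^-}$, so the only point needing care is positivity: if $\left\|f\right\|=0$, then $f^-(e^{it})=0$ in $L_{q,\,\nu^-}$, and since $\nu^-$ is positive a.e. this forces $f^-(e^{it})=0$ a.e. on $T$. Because $f\in {}_m H_1^-\subset H_1^-$, the boundary uniqueness theorem for Hardy classes then yields $f\equiv 0$.

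The main work is completeness. Let $\{f_n\}$ be Cauchy in ${}_m H_{q,\,\nu^-}^-$; then $\{f_n^-\}$ is Cauchy in $L_{q,\,\nu^-}$, and since the weighted Lebesgue space is complete there is $g\in L_{q,\,\nu^-}$ with $f_n^-\to g$ in $L_{q,\,\nu^-}$. The key observation is that the hypothesis $\left|\nu^-\right|^{-p/q}\in L_1$ furnishes a continuous embedding $L_{q,\,\nu^-}\hookrightarrow L_1(-\pi,\pi)$. Indeed, writing $|h|=\bigl(|h|(\nu^-)^{1/q}\bigr)(\nu^-)^{-1/q}$ and applying H\"older's inequality with the conjugate exponents $q$ and $p$ gives
\[\int_{-\pi}^{\pi}\left|h(t)\right|\,dt\le\left(\int_{-\pi}^{\pi}\left|h(t)\right|^{q}\nu^-(t)\,dt\right)^{1/q}\left(\int_{-\pi}^{\pi}\left|\nu^-(t)\right|^{-p/q}\,dt\right)^{1/p},\]
so that $\left\|h\right\|_{L_1}\le C\left\|h\right\|_{q,\,\nu^-}$ with $C=\bigl\||\nu^-|^{-p/q}\bigr\|_{L_1}^{1/p}<+\infty$. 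Hence $f_n^-\to g$ in $L_1$ as well.

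It remains to identify $g$ as the boundary value of a function in the class. I would use that the boundary traces of functions in ${}_m H_1^-$ form a closed subspace of $L_1(T)$: membership is characterized by the vanishing of the appropriate Fourier coefficients (those encoding analyticity outside the disc together with the finitely many encoding the zero of order at least $m$ at infinity), and these are continuous linear conditions on $L_1$. Since $f_n^-\to g$ in $L_1$, the limit $g$ satisfies the same conditions, so $g=f^-$ for some $f\in {}_m H_1^-$ recovered from $g$ through the Cauchy integral. Because $g\in L_{q,\,\nu^-}$, this $f$ lies in ${}_m H_{q,\,\nu^-}^-$, and $\left\|f_n-f\right\|_{{}_m H_{q,\,\nu^-}^-}=\left\|f_n^--g\right\|_{q,\,\nu^-}\to 0$, which establishes completeness.

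I expect the main obstacle to be this last closedness step: one must ensure that the $L_1$-limit of the boundary traces is again an admissible trace and that the analytic function it determines genuinely retains the prescribed order of vanishing at infinity. Passing to a subsequence converging a.e. guarantees that the $L_1$-limit and the $L_{q,\,\nu^-}$-limit agree a.e., so no inconsistency arises between the two modes of convergence, and the argument closes.
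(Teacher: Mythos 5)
Your proof is correct and takes essentially the same route as the paper: the paper leaves Proposition \ref{P2.2} unproved as ``the analog'' of Proposition \ref{P2.1}, but its proof of the parallel Lemma \ref{L3.1} follows exactly your scheme --- completeness of the weighted Lebesgue space, the H\"older estimate from $\left|\nu^-\right|^{-p/q}\in L_{1}$ giving the continuous embedding into $L_{1}$, and identification of the two boundary limits. The only cosmetic difference is that where the paper invokes completeness of the unweighted class ($E_{1}$, resp.\ ${}_{m}H_{1}^{-}$) directly, you re-derive the needed closedness of the trace subspace of ${}_{m}H_{1}^{-}$ in $L_{1}(T)$ via the vanishing of Fourier coefficients, which is an equivalent and equally valid justification of the same step.
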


Throughout this paper,  $q$ will denote the conjugate of a  number   $p$, i.e. $\frac{1}{p}+\frac{1}{q}=1$.
The restrictions of the functions belonging to $H_{p,\, \nu }^{+} $ and ${}_{m} H_{p,\, \nu }^{-} $ to the unit circle is denoted by $L_{p,\, \nu }^{+} $ and ${}_{m} L_{p,\, \nu }^{-} $, respectively: ${H_{p,\, \nu }^{+}  \mathord{\left/{\vphantom{H_{p,\, \nu }^{+}  {}_{\partial \omega } }}\right.\kern-\nulldelimiterspace} {}_{\partial \omega } } =L_{p,\, \nu }^{+} $; ${}_{m} {H_{p,\, \nu }^{-}  \mathord{\left/{\vphantom{H_{p,\, \nu }^{-}  {}_{\partial \omega } }}\right.\kern-\nulldelimiterspace} {}_{\partial \omega } } =_{m} L_{p,\, \nu }^{-} $.

We will say that the weight $\nu \left(\cdot \right)$ defined on $\left[-\pi ,\pi \right]$ belongs to the Muckenhoupt class $A_{p} ,1<p<+\infty $, if
\[{\mathop{\sup }\limits_{I\subset \left[-\pi ;\pi \right]}} \left(\frac{1}{\left|I\right|} \int _{I}\nu \left(t\right)dt \right)\left(\frac{1}{\left|I\right|} \int _{I}\left|\nu \left(t\right)\right|^{-\frac{1}{p -1} } dt \right)^{p -1} <+\infty ,\] 
where $\sup $ takes over all subintervals $I\subset \left[-\pi ,\pi \right],\, \, \, \, \, \left|I\right|$ is the Lebesgue measure of the interval $I$. 

Summarizing the results obtained earlier in \cite{44}, we reach the following

\begin{thm}\label{M-H} The system of exponentials $\left\{e^{int} \right\}_{n\in \mathbb Z} $ forms a basis in $L_{p;\nu } \left(-\pi ,\pi \right)\, $ if and only if $\nu \in A_{p} ,\, \, \, \, 1<p<+\infty $.\end{thm}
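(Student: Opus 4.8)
The plan is to reduce the basis property to the uniform boundedness of the Fourier partial-sum operators and then to identify that boundedness with the mapping properties of the Cauchy singular integral, which Theorem~\ref{D} characterizes through the Muckenhoupt condition. I would use the standard criterion that a complete and minimal system in a Banach space is a basis if and only if the associated partial-sum projections are uniformly bounded. For the exponential system the partial-sum operator is
$$S_n f(t)=\sum_{k=-n}^{n}\hat f(k)\,e^{ikt},\qquad \hat f(k)=\frac{1}{2\pi}\int_{-\pi}^{\pi}f(\tau)e^{-ik\tau}\,d\tau,$$
so the whole problem turns on showing that $\sup_n\|S_n\|_{L_{p,\nu}\to L_{p,\nu}}<\infty$ holds precisely when $\nu\in A_p$.

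First I would record the algebraic identity linking $S_n$ to the Riesz projection $P_+f=\sum_{k\ge 0}\hat f(k)e^{ikt}$. Writing $P_{\ge m}f=e^{imt}P_+\!\left(e^{-imt}f\right)$ and using $S_n=P_{\ge -n}-P_{\ge n+1}$, one gets
$$S_n f=e^{-int}P_+\!\left(e^{int}f\right)-e^{i(n+1)t}P_+\!\left(e^{-i(n+1)t}f\right).$$
Because $|e^{ikt}|\equiv 1$, multiplication by $e^{ikt}$ is an isometry of $L_{p,\nu}(-\pi,\pi)$, so this identity yields $\|S_n\|\le 2\|P_+\|$ uniformly in $n$, while the converse bound $\|P_+\|\le\sup_n\|S_n\|$ follows since $P_+$ is recovered as a strong limit from the $S_n$. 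Hence uniform boundedness of the partial sums is equivalent to boundedness of $P_+$ on $L_{p,\nu}$. Finally $P_+$ differs from the half-sum $\tfrac12(I+S_T)$ of the Cauchy operator on $T=\partial\omega$ only by a rank-one term, so boundedness of $P_+$ on $L_{p,\nu}$ is equivalent to boundedness of $S_T$ there.

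With this reduction in hand both implications follow from Theorem~\ref{D} applied on the unit circle. For sufficiency, assume $\nu\in A_p$; then $S_T$ is bounded on $L_{p,\nu}(T)$, hence so is $P_+$, hence $\sup_n\|S_n\|<\infty$. Completeness of $\{e^{int}\}$ holds because the $A_p$ condition, applied to the full interval, guarantees $\nu,\nu^{-1/(p-1)}\in L_1(-\pi,\pi)$, which places the trigonometric polynomials densely in $L_{p,\nu}$; minimality follows from the biorthogonal system $\{\tfrac{1}{2\pi}e^{int}\}$, whose members lie in the dual space $L_{q,\nu^{1-q}}$ precisely because $\nu^{1-q}=\nu^{-1/(p-1)}\in L_1$. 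A complete, minimal system with uniformly bounded partial sums is a basis, which settles the "if" part. For necessity, if $\{e^{int}\}$ is a basis then every $f$ has a convergent expansion, so by the Banach–Steinhaus theorem the projections $S_n$ are uniformly bounded; running the reduction backwards gives boundedness of $S_T$ on $L_{p,\nu}(T)$, and the "only if" half of Theorem~\ref{D} forces $\nu\in A_p$.

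The main obstacle is the careful passage between the interval model $L_{p,\nu}(-\pi,\pi)$ and the circle model $L_{p,\nu}(T)$, and in particular verifying that the weight transported to $T$ still satisfies the Muckenhoupt condition in the sense of Definition~\ref{D2.2}; once the arc-length geometry of $T$ is matched to Lebesgue measure on $(-\pi,\pi)$ (arcs correspond to subintervals) this is routine, but it must be done so that Theorem~\ref{D} is genuinely applicable. A secondary technical point is the density argument underlying completeness, where one must use the $A_p$-induced integrability of both $\nu$ and its dual weight to approximate an arbitrary $L_{p,\nu}$ function by trigonometric polynomials; everything else reduces to the singular-integral characterization already available.
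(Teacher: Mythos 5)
The paper itself offers no proof of Theorem \ref{M-H}: it is stated as a summary of results from \cite{44}, which in turn rest on the weighted norm inequalities of \cite{37,38}. Your proposal reconstructs exactly the classical argument behind those citations --- partial sums to the Riesz projection via modulations, Riesz projection to the Cauchy singular operator $S_T$, then Theorem \ref{D} in both directions, with completeness and minimality supplied by $\nu,\nu^{-1/(p-1)}\in L_1$ --- and in outline this is sound. Two small repairs: with the paper's normalization $S_\Gamma f=\frac{1}{2\pi i}\int_\Gamma \frac{f(\xi)}{\xi-\tau}\,d\xi$, the Sokhotskii--Plemelj computation gives $P_+=\frac12 I+S_T$ exactly on trigonometric polynomials, so no rank-one correction is needed (your ``half-sum $\frac12(I+S_T)$'' is the wrong normalization, though harmless for the boundedness equivalence); and since the expansion over $\mathbb Z$ is a double series in the sense of Definition \ref{D2.3}, you should bound the rectangular sums $S_{M,N}=P_{\ge -N}-P_{\ge M+1}$ rather than only the symmetric ones --- your modulation identity handles this verbatim.

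The one genuine gap is precisely the step you dismiss as routine: transferring the Muckenhoupt condition between $(-\pi,\pi)$ and $T$. The paper's definition of $A_p$ on $[-\pi,\pi]$ runs over subintervals $I\subset[-\pi,\pi]$, whereas Theorem \ref{D} on $T$ requires the arc condition of Definition \ref{D2.2}, and arcs through the point $e^{i\pi}$ correspond to unions of two intervals abutting $\pm\pi$, not to subintervals. Interval-$A_p$ does not imply arc-$A_p$: take $p=2$ and $\nu(t)=|t-\pi|^{3/5}\,|t+\pi|^{3/5}$; each endpoint exponent lies in $(-1,1)$, so the interval condition holds, but the weight transported to $T$ behaves like $|s|^{6/5}$ near the point $-1$, so $A_2(T)$ fails, $S_T$ is unbounded there, and the partial sums cannot be uniformly bounded --- hence your sufficiency direction collapses under the literal interval reading of the hypothesis. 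Your necessity direction is unaffected (it even delivers the stronger arc condition, since every subinterval is an arc). The repair is to read $\nu\in A_p$ periodically --- over arcs, equivalently over intervals for the $2\pi$-periodized weight --- which is the convention of \cite{37,38} and evidently the intended meaning in \cite{44}; with that reading, and the two normalization fixes above, your argument is complete.
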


By using this theorem the following theorem is proved.

\begin{thm}\label{T2.1}  Let $\nu \in A_{p} ,\, \, \, \, \, \, 1<p<+\infty $. Then: i) the system $\left\{z^{n} \right\}_{n\in \mathbb Z_{+} } $ (i.e. $\left\{e^{int} \right\}_{n\in \mathbb Z_{+} } $) forms a basis in $H_{\rho ,\nu }^{+} $ (i.e.in $L_{\rho ,\nu }^{+} $); ii) the system $\left\{z^{-n} \right\}_{n\ge m} $ (i.e. $\left\{e^{-int} \right\}_{n\ge m} $) forms a basis in ${}_{m} H_{\rho ,\nu }^{-} $ (i.e. in ${}_{m} L_{\rho ,\nu }^{-} $).
\end{thm}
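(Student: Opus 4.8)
The plan is to deduce both statements from Theorem \ref{M-H} together with the boundedness of the analytic (Riesz) projection, which is available precisely because $\nu\in A_p$ and the unit circle is a regular curve. Since the norms on $H_{p,\nu}^+$ and ${}_m H_{p,\nu}^-$ are defined through the non-tangential boundary values, the restriction maps $H_{p,\nu}^+\to L_{p,\nu}^+$ and ${}_m H_{p,\nu}^-\to {}_m L_{p,\nu}^-$ are isometric isomorphisms; hence it suffices to establish the basis property for the boundary systems $\{e^{int}\}_{n\ge 0}$ in $L_{p,\nu}^+$ and $\{e^{-int}\}_{n\ge m}$ in ${}_m L_{p,\nu}^-$.

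First I would invoke Theorem \ref{M-H}: as $\nu\in A_p$, the full system $\{e^{int}\}_{n\in\mathbb Z}$ is a basis of $L_{p,\nu}(-\pi,\pi)$, so each $f$ admits a unique norm-convergent expansion $f=\sum_{n\in\mathbb Z}c_n(f)e^{int}$ with continuous coefficient functionals $c_n$. Note that $\nu\in A_p$ forces $\nu^{-1/(p-1)}\in L_1$, whence by H\"older's inequality $L_{p,\nu}\hookrightarrow L_1$; consequently the ordinary Fourier coefficients of $f$ are well defined and, by orthogonality of the exponentials combined with uniqueness of the expansion, coincide with $c_n(f)$.

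Next I would introduce the coordinate projection $P_+$ sending $\sum_n c_n e^{int}$ to $\sum_{n\ge 0}c_n e^{int}$. The key point is that $P_+$ is bounded on $L_{p,\nu}$: the unit circle is a regular curve and $\nu\in A_p$, so by Theorem \ref{D} the Cauchy singular operator $S_T$ is bounded on $L_{p,\nu}(T)$, and $P_+$ is expressible through $S_T$ up to the constant Fourier mode. With $P_+$ bounded, I would apply the general Banach-space principle that if $\{e_n\}_{n\in\mathbb Z}$ is a basis and the partial coordinate projection onto an index subset $J$ is bounded, then $\{e_n\}_{n\in J}$ is a basis of its closed linear span, which equals the range of that projection. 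Taking $J=\mathbb Z_+$ yields that $\{e^{int}\}_{n\ge 0}$ is a basis of $\operatorname{Range}P_+$. For part (ii) I would use instead the projection $P_{-}^{(m)}$ onto the indices $\{n\le -m\}$, bounded by the same reasoning, and conclude that $\{e^{-int}\}_{n\ge m}$ is a basis of $\operatorname{Range}P_{-}^{(m)}$.

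The main obstacle, and the concluding step, is to identify these ranges with the weighted boundary spaces: $\operatorname{Range}P_+=L_{p,\nu}^+$ and $\operatorname{Range}P_{-}^{(m)}={}_m L_{p,\nu}^-$. Here I would show that $f\in L_{p,\nu}$ has vanishing negative Fourier coefficients precisely when it is the boundary value of a function in $H_{p,\nu}^+$, and analogously for the negative class with a zero of order $\ge m$ at infinity. One inclusion is easy, since analytic polynomials lie in $L_{p,\nu}^+$ and are dense in $\operatorname{Range}P_+$; the reverse requires that a function in $L_{p,\nu}^+$, being in $L_1$ with only nonnegative Fourier modes, extends analytically into the disc and belongs to $H_1^+$, after which its boundary membership in $L_{p,\nu}$ places it in $H_{p,\nu}^+$. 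This matching of the Fourier-analytic description with the weighted Hardy class definition, together with the completeness of the target spaces guaranteed by Propositions \ref{P2.1} and \ref{P2.2}, is the delicate part; the basis property itself then follows immediately from the boundedness of the projections.
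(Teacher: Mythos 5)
Your proposal is correct and follows exactly the route the paper intends: the paper itself gives no written-out proof of Theorem \ref{T2.1}, merely stating that it is obtained from Theorem \ref{M-H} (with details referred to \cite{44}), and your argument --- basis of $\left\{e^{int}\right\}_{n\in\mathbb Z}$ in $L_{p,\nu}$ from $\nu\in A_{p}$, boundedness of the Riesz projection via the Cauchy singular operator of Theorem \ref{D} on the (regular) unit circle, and identification of the projection ranges with $L_{p,\nu}^{+}$ and ${}_{m}L_{p,\nu}^{-}$ through the vanishing of one-sided Fourier coefficients and the Smirnov-type extension into $H_{1}^{\pm}$ --- is precisely the standard realization of that sketch. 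The details you flag as delicate (the $L_{p,\nu}\hookrightarrow L_{1}$ embedding from the $A_{p}$ condition, completeness of the weighted Hardy classes via Propositions \ref{P2.1} and \ref{P2.2}, and the range identification) are handled correctly, so there is nothing to add.
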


For more comprehensive information about this and related results see, for example, \cite{44}.

At the end of this section let us give the definition of double bases in Banach space. 

\begin{defn}\label{D2.3} The double system $\left\{x^{+}_{n}; x^{-}_{n}\right\}_{n \in \mathbb N} \subset X$ in Banach space $X$ is said to be a bases for  $X$ if $\forall x \in X $ has the unique representation 
\[ x=\sum^{\infty}_{n=1}\lambda_{n}^{+} x_{n}^{+} + \sum^{\infty}_{n=1}\lambda_{n}^{-} x_{n}^{-}, \]
where $\left\{\lambda_{n}^{+}; \lambda_{n}^{-}\right\}_{n \in \mathbb N} \subset \mathbb C$ are scalars. 

\end{defn}

\section{The main assumptions.  Weighted Smirnov classes $E_{p,\rho } \left(D^{+} \right)$   and ${}_{m} E_{p,\rho } \left(D^{-} \right)$}

To study the basis properties of the system of generalized Faber polynomials in weighted Lebesgue spaces we will use the methods of Riemann boundary value problems for analytic functions. We will consider  the Riemann problem in weighted Smirnov classes. In this paragraph we define these weighted classes and prove that under some conditions these spaces are Banach spaces.

Let  $A\left(\xi \right)\equiv \left|A\left(\xi \right)\right|e^{i\alpha \left(\xi \right)} $, $B\left(\xi \right)\equiv \left|B\left(\xi \right)\right|e^{i\beta \left(\xi \right)} $ be complex-valued functions, defined on the curve $\Gamma $. We will assume that the following conditions hold:

i)  $\left|A\right|^{\pm 1} \, ,\, \left|B\right|^{\pm 1} \in L_{\infty } \left(\Gamma \right)$ ;

ii) \textit{$\alpha \left(\xi \right)\, ,\, \, \beta \left(\xi \right)$ are piecewise continuous functions on $\Gamma $ and let\linebreak  $\left\{\xi _{k},k=\overline{1,r}\, \right\}\subset \Gamma $ be the discontinuous points of $\theta \left(\xi \right)\equiv \beta \left(\xi \right)-\alpha \left(\xi \right)$.}

Relative with the curve $ \Gamma$ we assume that the following conditions hold:

iii) \textit{Either $\Gamma $ is a piecewise Lyapunov or Radon curve (i.e. is a curve with bounded rotation) with no cusps. As the direction over $ \Gamma$ we accept the positive direction, i.e. the direction when moving in it the area $D$ remains on the left. Let $a\in \Gamma$ be the initial (also the end) point of the curve $ \Gamma$. We will assume that $\xi \in \Gamma $ follows the point $\tau \in \Gamma $, i.e. $\tau \prec \xi $, if $\xi $ comes after $\tau $ when moving in the positive direction on $\Gamma \backslash a$, where $a\in \Gamma$ is a two stuck points $a^{+} =a^{-} \, \, ,\, \, \, a^{+} -$the start, $a^{-} \, -$end of the curve $\Gamma$.}

\textit{ Denote the class of curves satisfying iii) by ${\rm LR}$.}

Thus, without loss of generality, we will assume that $a^{+} \prec \xi _{1} \prec ...\prec \xi _{r} \prec b=a^{-} $. We denote the one sided limit $  \lim_{\substack{\xi \to \xi _{0} \pm 0\\ {\xi \in \Gamma}}}g\left(\xi \right) $ of the function $g\left(\xi \right)$ at the point $\xi _{0} \in \Gamma $ generated by this order by $g\left(\xi _{0} \pm 0\right)$. Let $h_{k} $:$h_{k} =\theta \left(\xi _{k} +0\right)-\theta \left(\xi _{k} -0\right)$, $k=\overline{1,r}$ be the jumps of the function $\theta \left(\xi \right)$ at points $\xi _{k} \, ,\, k=\overline{1,r}$.

Let us give the definition of Smirnov classes. Let $D^{+} \subset \mathbb C-$be a bounded region with the boundary $\Gamma =\partial D^{+} $ which satisfies iii).  By $E_{p} \left(D^{+} \right)\, ,\, 1<p<\infty $, we denote the Smirnov space of analytic functions on $D^{+} $, which is also a Banach space with respect to the norm $\left\| \, \cdot \, \right\| _{E_{p} \left(D^{+} \right)} $:
\begin{equation} \label{GrindEQ__3_1_} 
\left\| f\right\| _{E_{p} \left(D^{+} \right)} =\, :\, \left\| f^{+} \right\| _{L_{p} \left(\Gamma \right)} \, ,\, \forall f\in E_{p} \left(D^{+} \right),         
\end{equation} 
where $f^{+} ={f \mathord{\left/{\vphantom{f {}_{\Gamma } }}\right.\kern-\nulldelimiterspace} {}_{\Gamma } } -$ is the non-tangential boundary values of $f$ along $\Gamma $. Similarly, the Smirnov space $E_{p} \left(D^{-} \right)$ of functions defined on the region $D^{-} $ with the boundary $\Gamma =\partial D^{-} $ is defined with by norm
\[\left\| f\right\| _{E_{p} \left(D^{-} \right)} =\, :\, \left\| f^{-} \right\| _{L_{p} \left(\Gamma \right)} \, ,\, \forall f\in E_{p} \left(D^{-} \right),\] 
where $f^{-} ={f \mathord{\left/{\vphantom{f {}_{\Gamma } }}\right.\kern-\nulldelimiterspace} {}_{\Gamma } } -$ is the non-tangential boundary values of $f$ along $\Gamma $.

Let $\rho \in L_{1} \left(\Gamma \right)-$ is a weight function. Denote 
\[E_{p,\rho } \left(D^{+} \right)\equiv \left\{f\in E_{1} \left(D^{+} \right):\, \left\| f^{+} \right\| _{L_{p,\rho } \left(\Gamma \right)} <+\infty \right\} ,\] 
and equip it with the norm
\begin{equation} \label{GrindEQ__3_2_} 
\left\| f\right\| _{E_{p,\rho } \left(D^{+} \right)} =\left\| f^{+} \right\| _{L_{p,\rho } \left(\Gamma \right)} .        
\end{equation} 

The Smirnov classes on unbounded regions are defined similarly. Let $D^{-} \subset \mathbb C$ is a unbounded region containing infinity $\left(\infty \right)$. Denote by ${}_{m} E_{1} \left(D^{-} \right)$ the class of functions of $E_{1} \left(D^{-} \right)$, which have the Laurent expansion at $z=\infty $ of the form $f\left(z\right)=\sum _{k=-\infty }^{m}a_{k} z^{k}  $, where $m$ is an integer.

For the weight function $\rho \in L_{1} \left(\Gamma \right)$, the weighted class ${}_{m} E_{p,\rho } \left(D^{-} \right)$ is defined as
\[{}_{m} E_{p,\rho } \left(D^{-} \right)\equiv \left\{f\in _{m} E_{1} \left(D^{-} \right):\, \left\| f^{-} \right\| _{L_{p,\rho } \left(\Gamma \right)} <+\infty \right\}, \] 
and here the norm is given by
\[\left\| f\right\| _{{}_{m} E_{p,\rho } \left(D^{-} \right)} =\left\| f^{-} \right\| _{L_{p,\rho } \left(\Gamma \right)} , \] 
where $f^{-} ={f \mathord{\left/{\vphantom{f {}_{\Gamma } }}\right.\kern-\nulldelimiterspace} {}_{\Gamma } } -$ is the non-tangential boundary values of $f$ along $\Gamma $.

 We have the following

\begin{lem}\label{L3.1} If $\rho ^{-\frac{q}{p} } \in L_{1} \left(\Gamma \right)$ then $E_{p,\rho } \left(D^{+} \right)$, $1<p<+\infty $,  is a Banach space. 
\end{lem}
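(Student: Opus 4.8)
The plan is to show that $E_{p,\rho}\left(D^{+}\right)$ is complete by reducing completeness to properties we already know, namely the Banach space structure of $L_{p,\rho}\left(\Gamma\right)$ together with the fact that $E_{1}\left(D^{+}\right)$ is a complete space of analytic functions whose boundary values recover the interior functions via the Cauchy integral. First I would observe that the norm \eqref{GrindEQ__3_2_} is precisely the $L_{p,\rho}\left(\Gamma\right)$-norm of the boundary trace $f^{+}$, so the map $f\mapsto f^{+}$ embeds $E_{p,\rho}\left(D^{+}\right)$ isometrically into $L_{p,\rho}\left(\Gamma\right)$. Since $L_{p,\rho}\left(\Gamma\right)$ is a Banach space, it suffices to prove that the image of this embedding is a \emph{closed} subspace: a closed subspace of a Banach space is itself Banach, and the isometry then transfers completeness back to $E_{p,\rho}\left(D^{+}\right)$.

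To prove closedness I would take a sequence $f_{n}\in E_{p,\rho}\left(D^{+}\right)$ whose boundary traces $f_{n}^{+}$ converge in $L_{p,\rho}\left(\Gamma\right)$ to some $g\in L_{p,\rho}\left(\Gamma\right)$, and I would need to produce $f\in E_{p,\rho}\left(D^{+}\right)$ with $f^{+}=g$ a.e. The key step is to pass from $L_{p,\rho}$-convergence on $\Gamma$ to $L_{1}\left(\Gamma\right)$-convergence, so that the hypothesis $E_{p,\rho}\left(D^{+}\right)\subset E_{1}\left(D^{+}\right)$ can be exploited. This is exactly where the assumption $\rho^{-q/p}\in L_{1}\left(\Gamma\right)$ enters: by Hölder's inequality with exponents $p$ and $q$ one estimates, for any $h\in L_{p,\rho}\left(\Gamma\right)$,
\[
\int_{\Gamma}\left|h\left(\xi\right)\right|\left|d\xi\right|
=\int_{\Gamma}\left|h\left(\xi\right)\right|\rho^{1/p}\left(\xi\right)\,\rho^{-1/p}\left(\xi\right)\left|d\xi\right|
\le\left\|h\right\|_{L_{p,\rho}\left(\Gamma\right)}\left(\int_{\Gamma}\rho^{-q/p}\left(\xi\right)\left|d\xi\right|\right)^{1/q},
\]
which shows the embedding $L_{p,\rho}\left(\Gamma\right)\hookrightarrow L_{1}\left(\Gamma\right)$ is continuous. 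Consequently $f_{n}^{+}\to g$ in $L_{1}\left(\Gamma\right)$ as well, and in particular $\left\{f_{n}^{+}\right\}$ is Cauchy in $L_{1}\left(\Gamma\right)$.

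With $L_{1}$-convergence in hand, I would invoke the completeness of the ordinary Smirnov class $E_{1}\left(D^{+}\right)$, whose norm is the $L_{1}\left(\Gamma\right)$-norm of the boundary values. Since $\left\{f_{n}\right\}$ is Cauchy in $E_{1}\left(D^{+}\right)$, it converges to some $f\in E_{1}\left(D^{+}\right)$ with $f_{n}^{+}\to f^{+}$ in $L_{1}\left(\Gamma\right)$. Uniqueness of limits in $L_{1}\left(\Gamma\right)$ forces $f^{+}=g$ a.e. on $\Gamma$. Finally, since $g\in L_{p,\rho}\left(\Gamma\right)$ by construction, we have $\left\|f^{+}\right\|_{L_{p,\rho}\left(\Gamma\right)}=\left\|g\right\|_{L_{p,\rho}\left(\Gamma\right)}<+\infty$, so $f$ genuinely lies in $E_{p,\rho}\left(D^{+}\right)$ and $f_{n}\to f$ in its norm. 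This establishes closedness of the image, hence completeness.

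The main obstacle I anticipate is the transition at the level of interior analytic functions rather than just boundary traces: one must ensure that the $L_{1}$-limit of the traces $f_{n}^{+}$ really is the boundary trace of a single analytic function $f\in E_{1}\left(D^{+}\right)$, and not merely an $L_{1}$ function on $\Gamma$. This is what the completeness of $E_{1}\left(D^{+}\right)$ provides, but it relies on the structural fact that in a Smirnov class the Cauchy integral of the boundary values reproduces the function inside $D^{+}$; care is needed because $D^{+}$ is bounded by a curve of class $\mathrm{LR}$ rather than a smooth one, so I would lean on the regularity of $\Gamma$ (Definition \ref{D2.1}) to guarantee that Cauchy integrals behave well and that $E_{1}\left(D^{+}\right)$ has the required Banach structure. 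The Hölder estimate above is routine; the genuinely load-bearing input is the completeness of $E_{1}\left(D^{+}\right)$ together with the non-tangential trace theory underlying the identification $f\leftrightarrow f^{+}$.
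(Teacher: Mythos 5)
Your proof is correct and follows essentially the same route as the paper's: the same H\"older estimate exploiting $\rho^{-\frac{q}{p}}\in L_{1}\left(\Gamma\right)$ transfers a Cauchy sequence from $L_{p,\rho}\left(\Gamma\right)$ to $E_{1}\left(D^{+}\right)$, whose completeness yields the analytic limit $f$, and the identification $f^{+}=g$ a.e.\ on $\Gamma$ closes the argument exactly as in the paper. Your packaging of this as closedness of an isometric image in $L_{p,\rho}\left(\Gamma\right)$ is only a cosmetic reformulation of the paper's direct Cauchy-sequence argument.
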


\begin{proof} Let $\left\{f_{n} \right\}_{n\in \mathbb N} \subset E_{p,\rho } \left(D^{+} \right)$ be any fundamental sequence, that is 

   $\left\| f_{n} -f_{m} \right\| _{E_{p,\rho } \left(D^{+} \right)} \to 0$, as  $n,m\to \infty $.

\noindent Thus

 $\left\| f_{n}^{+} \left(\xi \right)-f_{m}^{+} \left(\xi \right)\right\| _{L_{p,\rho } \left(\Gamma \right)} \to 0$, as  $n,m\to \infty $.

\noindent As the space $L_{p,\rho } \left(\Gamma \right)$ is complete, then

$\exists g\in L_{p,\rho } \left(\Gamma \right):f_{n}^{+} \left(\xi \right)\to g\left(\xi \right)$, $n\to \infty $, in  $L_{p,\rho } \left(\Gamma \right)$.

\noindent We have
\[\left\| f_{n} -f_{m} \right\| _{E_{1} \left(D^{+} \right)} =\left\| f_{n}^{+} -f_{m}^{+} \right\| _{L_{1} \left(\Gamma \right)} =\int _{\Gamma }\left|f_{n}^{+} \left(\xi \right)-f_{m}^{+} \left(\xi \right)\right| \left|d\xi \right|=\] 
\[=\int _{\Gamma }\left|f_{n}^{+} \left(\xi \right)-f_{m}^{+} \left(\xi \right)\right| \rho ^{\frac{1}{p} } \left(\xi \right)\rho ^{-\frac{1}{p} } \left(\xi \right)\left|d\xi \right|\le\]\[\le \left(\int _{\Gamma }\rho  ^{-\frac{q}{p} } \left(\xi \right)\left|d\xi \right|\right)^{\frac{1}{q} } \left(\int _{\Gamma }\left|f_{n}^{+} \left(\xi \right)-f_{m}^{+} \left(\xi \right)\right| ^{p} \rho \left(\xi \right)\left|d\xi \right|\right)^{\frac{1}{p} } .\] 
From $\rho ^{-\frac{q}{p} } \in L_{1} \left(\Gamma \right)$ it follows that $\left\{f_{n} \right\}_{n\in \mathbb N} $  is fundamental in $E_{1} \left(D^{+} \right)$ and hence, $\exists f\in E_{1} \left(D^{+} \right):f_{n} \to f,\, n\to \infty $, in $E_{1} \left(D^{+} \right)$. Hence, $f_{n}^{+} \left(\xi \right)\to f^{+} \left(\xi \right)\, ,\, n\to \infty $, in $L_{1} \left(\Gamma \right)$. Since
\[\left\| f_{n}^{+} -g\right\| _{L_{1} \left(\Gamma \right)} =\int _{\Gamma }\left|f_{n}^{+} \left(\xi \right)-g\left(\xi \right)\right| \left|d\xi \right|\le \] 
\[\le \left(\int _{\Gamma }\rho  ^{-\frac{q}{p} } \left(\xi \right)\left|d\xi \right|\right)^{\frac{1}{q} } \left(\int _{\Gamma }\left|f_{n}^{+} \left(\xi \right)-g\left(\xi \right)\right| ^{p} \rho \left(\xi \right)\left|d\xi \right|\right)^{\frac{1}{p} } ,\] 
from $f_{n}^{+} \to g\, ,\, n\to \infty $, in $L_{p,\rho } \left(\Gamma \right)$ it follows that $f_{n}^{+} \to g^{+} \, ,\, n\to \infty $, in $L_{1} \left(\Gamma \right)$. Hence we get that $f^{+} \left(\xi \right)=g\left(\xi \right)$ a.e. on $\Gamma $, hereby $\left\| f_{n} -f\right\| _{E_{p,\rho } \left(D^{+} \right)} \, ,\, n\to \infty $. That completes the proof.
\end{proof}

 The same reasoning proves the following  

\begin{lem}\label{L3.2} If $\rho ^{-\frac{q}{p} } \in L_{1} \left(\Gamma \right)$ then ${}_{m} E_{p,\rho } \left(D^{-} \right), 1<p<+\infty,$ is a Banach space.
\end{lem}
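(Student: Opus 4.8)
The plan is to mirror essentially verbatim the argument already used for Lemma~\ref{L3.1}, replacing $D^{+}$ by $D^{-}$, the interior boundary values $f_{n}^{+}$ by the exterior ones $f_{n}^{-}$, and the auxiliary Banach space $E_{1}\left(D^{+}\right)$ by ${}_{m}E_{1}\left(D^{-}\right)$. First I would take an arbitrary fundamental sequence $\left\{f_{n}\right\}_{n\in\mathbb N}\subset {}_{m}E_{p,\rho}\left(D^{-}\right)$. By the very definition of the norm $\left\|\,\cdot\,\right\|_{{}_{m}E_{p,\rho}\left(D^{-}\right)}$, this is the same as saying that $\left\{f_{n}^{-}\right\}$ is a Cauchy sequence in the complete space $L_{p,\rho}\left(\Gamma\right)$, so there exists $g\in L_{p,\rho}\left(\Gamma\right)$ with $f_{n}^{-}\to g$ in $L_{p,\rho}\left(\Gamma\right)$ as $n\to\infty$.

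Next I would pass to the unweighted $L_{1}$ setting by H\"older's inequality. Writing $\left|f_{n}^{-}-f_{m}^{-}\right|=\left|f_{n}^{-}-f_{m}^{-}\right|\rho^{\frac{1}{p}}\rho^{-\frac{1}{p}}$ and applying H\"older with the conjugate exponents $p$ and $q$ gives
\[\left\|f_{n}^{-}-f_{m}^{-}\right\|_{L_{1}\left(\Gamma\right)}\le\Big(\int_{\Gamma}\rho^{-\frac{q}{p}}\left(\xi\right)\left|d\xi\right|\Big)^{\frac{1}{q}}\left\|f_{n}^{-}-f_{m}^{-}\right\|_{L_{p,\rho}\left(\Gamma\right)}.\]
The hypothesis $\rho^{-\frac{q}{p}}\in L_{1}\left(\Gamma\right)$ makes the first factor finite, so $\left\{f_{n}\right\}$ is fundamental in ${}_{m}E_{1}\left(D^{-}\right)$. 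Since ${}_{m}E_{1}\left(D^{-}\right)$ is a Banach space, there is $f\in {}_{m}E_{1}\left(D^{-}\right)$ with $f_{n}\to f$, hence $f_{n}^{-}\to f^{-}$ in $L_{1}\left(\Gamma\right)$. Applying the same H\"older estimate to $f_{n}^{-}-g$ shows that the convergence $f_{n}^{-}\to g$ in $L_{p,\rho}\left(\Gamma\right)$ also takes place in $L_{1}\left(\Gamma\right)$; by uniqueness of the $L_{1}$ limit we obtain $f^{-}=g$ a.e. on $\Gamma$. Consequently $f\in {}_{m}E_{p,\rho}\left(D^{-}\right)$ and $\left\|f_{n}-f\right\|_{{}_{m}E_{p,\rho}\left(D^{-}\right)}=\left\|f_{n}^{-}-g\right\|_{L_{p,\rho}\left(\Gamma\right)}\to 0$, which is exactly the required completeness.

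The only point that genuinely needs more care than in Lemma~\ref{L3.1} — and the step I expect to be the main obstacle — is the completeness of the auxiliary space ${}_{m}E_{1}\left(D^{-}\right)$. For the bounded region the completeness of $E_{1}\left(D^{+}\right)$ is classical and was used without comment; for the unbounded region one must both invoke completeness of $E_{1}\left(D^{-}\right)$ and check that the defining constraint of ${}_{m}E_{1}\left(D^{-}\right)$ — that the Laurent expansion at $z=\infty$ has the form $\sum_{k=-\infty}^{m}a_{k}z^{k}$ — is preserved under $E_{1}$-limits. This is a closedness assertion: since convergence of boundary values in $L_{1}\left(\Gamma\right)$ forces locally uniform convergence of the functions on compact subsets of $D^{-}$ (via the Cauchy/Smirnov integral representation), each Laurent coefficient $a_{m+1},a_{m+2},\dots$ depends continuously on the function and therefore remains equal to zero in the limit. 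Hence ${}_{m}E_{1}\left(D^{-}\right)$ is a closed subspace of the Banach space $E_{1}\left(D^{-}\right)$, and is itself a Banach space, which is precisely what the remainder of the argument uses.
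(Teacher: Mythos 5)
Your proof is correct and is essentially the paper's own argument: the paper proves Lemma~\ref{L3.1} in detail and then disposes of Lemma~\ref{L3.2} with the single remark that ``the same reasoning proves'' it, which is precisely the transposition $D^{+}\mapsto D^{-}$, $f^{+}\mapsto f^{-}$, $E_{1}\left(D^{+}\right)\mapsto{}_{m}E_{1}\left(D^{-}\right)$ that you carry out. Your closing paragraph verifying that ${}_{m}E_{1}\left(D^{-}\right)$ is a closed subspace of $E_{1}\left(D^{-}\right)$ (the Laurent constraint at $\infty$ surviving $L_{1}\left(\Gamma\right)$-limits via locally uniform convergence) fills in a detail the paper leaves implicit, and is a welcome addition rather than a deviation.
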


\section{The Riemann problem in weighted Smirnov classes}

Consider the following homogeneous Riemann problem in $E_{p,\rho } \left(D^{+} \right)\times _{m} E_{p,\rho } \left(D^{-} \right):$ 

\begin{equation} \label{GrindEQ__4_1_} 
A\left(\xi \right)F^{+} \left(\xi \right)+B\left(\xi \right)F^{-} \left(\xi \right)=0, \,\,\mbox{a.e.}\,\, \xi \in \Gamma.   
 \end{equation}
The solution of the problem \eqref{GrindEQ__4_1_}  is a pair of analytic functions 
\[\left(F^{+} ;F^{-} \right)\in E_{p,\rho } \left(D^{+} \right)\times _{m} E_{p,\rho } \left(D^{-} \right),\] 
whose non-tangential  boundary values $F^{\pm } \left(\xi \right)$  satisfy \eqref{GrindEQ__4_1_} on $\Gamma $. The ``non-weighted'' case of this problem was thoroughly studied earlier and we refer the reader to the monograph \cite{39} for its theory.

Let $S$ be the length of the curve $\Gamma $ and $z=z\left(s\right)\, ,\, 0\le s\le S,$ be the parametric equation of the curve $\Gamma $ in the element of length $ds$. Set
\[\sigma \left(s\right)\equiv \left|z\left(0\right)-z\left(s\right)\right|^{-\frac{h_{0} }{2\pi } } \prod _{0<s_{k} <S}\left|z\left(s_{k} \right)-z\left(s\right)\right| ^{-\frac{h_{k} }{2\pi } }  ,\] 
where $\xi _{k} =z\left(s_{k} \right)\, ,\, \, k=\overline{1,r}$. Let
\[\Omega \left(s\right)\equiv \theta \left(z\left(s\right)\right)\, ,\, 0\le s\le S.\] 
Hence 
\[h_{k} =\Omega \left(s_{k} +0\right)-\Omega \left(s_{k} -0\right), k=\overline{1,r}\, ;h_{0} =\Omega \left(+0\right)-\Omega \left(S-0\right). \] 
We will assume that the weight $\rho \left(\cdot \right)$  satisfies the following main condition:

$\alpha )$ \textit{  There exists $p_{1} ;p_{2} \in \left(1,+\infty \right)$, such that} 
\[\int _{0}^{S}\sigma ^{pp_{1} } \left(s\right)\rho ^{p_{1} } \left(z\left(s\right)\right)ds<+\infty  , \int _{0}^{S}\sigma ^{-qp_{2} } \left(s\right)\rho ^{-\frac{q}{p} p_{2} } \left(z\left(s\right)\right)ds<+\infty  .\]

In \cite{31} it was proved the following 

\begin{thm}\label{T4.1} Let the conditions i)-iii) and $\alpha )$ hold. Then, the general solution of the homogeneous problem  \eqref{GrindEQ__4_1_} in the class $E_{p,\rho } \left(D^{+} \right)\times _{m} E_{p,\rho } \left(D^{-} \right), 1<p<+\infty,$ has the form
\begin{equation} \label{GrindEQ__4_2_} 
F\left(z\right)\equiv Z\left(z\right)P_{m} \left(z\right),       
\end{equation} 
where $Z\left(z\right)$ is the canonical solution of the homogeneous problem, $P_{m} \left(z\right)$ is any polynomial of degree $k\le m.$ In the case $m<0$, $P_{m} \left(\cdot \right)\equiv 0$. 
\end{thm}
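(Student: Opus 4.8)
The plan is to follow the classical factorization scheme for the Riemann boundary value problem, as developed in \cite{39}, adapted to the weighted Smirnov setting. First I would rewrite the boundary condition \eqref{GrindEQ__4_1_} in multiplicative form. Since $\left|A\right|^{\pm 1}\in L_\infty(\Gamma)$ by i), we may divide by $A(\xi)$ to obtain $F^+(\xi)=G(\xi)F^-(\xi)$ a.e. on $\Gamma$, where $G(\xi)\equiv -B(\xi)/A(\xi)=-\frac{|B|}{|A|}e^{i\theta(\xi)}$ and $\theta=\beta-\alpha$. By i) the modulus $|G|^{\pm1}$ is bounded and bounded away from zero, while by ii) the argument of $G$ is piecewise continuous with the prescribed jumps $h_k$ at the points $\xi_k$, $k=\overline{1,r}$, together with the jump $h_0$ at the stuck point $a$. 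Thus the whole problem is governed by the factorization of $G$.

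The core step is the construction of the canonical solution $Z$. I would separate $\log G$ into its continuous part and the contribution of the jumps. The jump part is modeled by power-type factors: to each $\xi_k$ one attaches a factor behaving like $\left(z-\xi_k\right)^{h_k/2\pi i}$ (and similarly at $a$ for $h_0$), so that crossing $\Gamma$ near $\xi_k$ reproduces the jump of $\arg G$. The continuous part is handled by the Cauchy-type integral
\[
\Lambda(z)=\frac{1}{2\pi i}\int_\Gamma\frac{\log\!\big[G_0(\xi)\big]}{\xi-z}\,d\xi,
\]
where $G_0$ is $G$ with the jump factors and the winding removed, so that $\log G_0$ is single-valued and summable. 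Setting $Z(z)=e^{\Lambda(z)}$ multiplied by the jump factors and by a power $\left(z-z_0\right)^{\kappa}$ ($z_0\in D^+$ fixed) encoding the index $\kappa=\mathrm{ind}\,G$, one obtains a function analytic and zero-free in $D^+$ and in $D^-\setminus\{\infty\}$ whose boundary values satisfy $Z^+=G\,Z^-$ on $\Gamma$. By construction the singular behavior of $Z^\pm$ on $\Gamma$ is precisely $\sigma(s)^{\pm1}$, the weight introduced before the theorem.

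The main obstacle is to verify that this $Z$ actually lands in the weighted classes: that $Z^+\in E_{p,\rho}(D^+)$, that $Z$ has the correct (at most order $m$) behavior at infinity to belong to ${}_mE_{p,\rho}(D^-)$, and that the reciprocal $1/Z$ has the conjugate integrability $\big(Z^-\big)^{-1}\rho^{-1/p}\in L_q(\Gamma)$ required to pair against test functions. This is exactly where hypothesis $\alpha)$ enters. Since $\big|Z^\pm(z(s))\big|\asymp\sigma(s)^{\pm1}$, the two requirements $\int_0^S\sigma^{pp_1}\rho^{p_1}\,ds<\infty$ and $\int_0^S\sigma^{-qp_2}\rho^{-(q/p)p_2}\,ds<\infty$ yield, via Hölder with exponents $p_1,p_2$ and the regularity of $\Gamma$ (the class ${\rm LR}$, which guarantees boundedness of $S_\Gamma$ and control of the Cauchy integral $\Lambda$), that $Z^+\rho^{1/p}\in L_p(\Gamma)$ and $\big(Z^-\big)^{-1}\rho^{-1/p}\in L_q(\Gamma)$. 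Carrying out these summability estimates — and in particular matching the admissible range of the exponents of the power factors at the $\xi_k$ against the behavior of $\rho$ and the geometry of $\Gamma$ — is the delicate technical heart of the argument.

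Finally, given any solution $(F^+,F^-)$ I would form the ratio $\Phi\equiv F^+/Z^+$ in $D^+$ and $\Phi\equiv F^-/Z^-$ in $D^-$. On $\Gamma$ the boundary relation $F^+=G F^-$ together with $Z^+=G Z^-$ yields $F^+/Z^+=F^-/Z^-$, so the two definitions agree on $\Gamma$; since both are quotients of $E_1$-type functions by a zero-free $Z$, the standard gluing argument (the Cauchy integral of the common boundary value is analytic on both sides) shows that $\Phi$ extends to a single entire function. Its growth at $\infty$ is controlled by the order-$m$ Laurent behavior admissible in ${}_mE_{p,\rho}(D^-)$ and by the normalization of $Z$, so $\Phi$ must be a polynomial of degree $k\le m$, giving $F=Z\,P_m$ as claimed; when $m<0$ the growth constraint forces $\Phi\equiv0$, hence $P_m\equiv0$.
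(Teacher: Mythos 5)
Note first that the paper itself contains no proof of Theorem \ref{T4.1}: it is imported from \cite{31}, the text only recording the explicit canonical solution $Z=Z_{\left(1\right)}\tilde{Z}$ built from Cauchy-type integrals of $\ln\left|D\left(s\right)\right|$ and $\Omega\left(s\right)$. Your outline is exactly that classical factorization--Liouville scheme (reduce \eqref{GrindEQ__4_1_} to $F^{+}=GF^{-}$, construct $Z$ with $Z^{+}=GZ^{-}$, then glue $\Phi=F^{\pm}/Z^{\pm}$ across $\Gamma$ and kill it by the growth constraint at infinity, with condition iii) supplying the Smirnov property of $D^{\pm}$ needed for the $E_{\delta}$-plus-$L_{1}$-boundary-values upgrade), so in substance you reproduce the cited argument. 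Two local corrections are needed. First, your jump factor is mis-stated: for a jump $h_{k}$ of $\arg G=\theta$ at $\xi_{k}$, the exponent is $\gamma_{k}=\frac{1}{2\pi i}\ln\frac{G\left(\xi_{k}-0\right)}{G\left(\xi_{k}+0\right)}=-\frac{h_{k}}{2\pi}$, a \emph{real} number; the factor $\left(z-\xi_{k}\right)^{h_{k}/2\pi i}$ you wrote has purely imaginary exponent, hence bounded modulus, and could never produce the weight $\sigma\left(s\right)$ — your own subsequent claim $\left|Z^{\pm}\right|\asymp\sigma^{\pm1}$ tacitly presupposes the corrected exponent, which is the one matching the paper's definition of $\sigma$. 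Second, $\left|Z^{\pm}\left(z\left(s\right)\right)\right|\asymp\sigma\left(s\right)^{\pm1}$ is not literally true: since $\theta$ is only piecewise continuous (no Dini modulus is assumed), the exponential of the Cauchy integral of the continuous part of $\Omega$ need not be bounded on $\Gamma$; it belongs to $L_{r}\left(\Gamma\right)$ for every finite $r$ (a Zygmund-type estimate), and this is precisely the slack that the strict inequalities $p_{1},p_{2}>1$ in condition $\alpha)$ are designed to absorb — so your H\"older step with exponents $p_{1},p_{2}$ is the right move, but it must be run against $\sigma^{\pm1}$ multiplied by these $L_{r}$ factors, not against $\sigma^{\pm1}$ alone. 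Finally, the index factor $\left(z-z_{0}\right)^{\kappa}$ you introduce is superfluous in this setting: $\theta=\beta-\alpha$ is a single-valued piecewise continuous function on the closed curve $\Gamma$, so the winding is carried entirely by the jumps $h_{0},h_{1},\dots,h_{r}$, and the admissible polynomial degree is governed by $m$ alone, as in the statement. With these repairs your plan coincides with the proof in \cite{31}.
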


The canonical solution $Z\left(\, \cdot \, \right)$ is defined by $Z\left(z\right)=Z_{\left(\, \cdot \, \right)} \left(z\right)\tilde{Z}\left(z\right)$,  where

\[Z_{\left(1\right)} \left(z\right)=\exp \left\{\frac{1}{2\pi i} \int _{\Gamma }\ln \left|D\left(s\right)\right|\frac{dz\left(s\right)}{z\left(s\right)-z}  \right\} ,\] 
\[\tilde{Z}\left(z\right)=\exp \left\{\frac{1}{2\pi } \int _{\Gamma }\Omega \left(s\right)\frac{dz\left(s\right)}{z\left(s\right)-z}  \right\}.\] 
The function $D\left(\, \cdot \, \right)$  is defined by 
\[D\left(S\right)=-\frac{B\left(z\left(s\right)\right)}{A\left(z\left(s\right)\right)} , 0\le s\le S.\] 

 This theorem immediately implies

\begin{cor}\label{C4.1} Under the conditions of Theorem \ref{T4.1} the  problem \eqref{GrindEQ__4_1_} has only trivial solution in $E_{p,\rho } \left(D^{+} \right)\times _{m} E_{p,\rho } \left(D^{-} \right), 1<p<+\infty,$ if $F\left(\infty \right)=0$.
\end{cor}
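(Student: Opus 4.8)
The plan is to read off the corollary directly from the structural description of the general solution given in Theorem~\ref{T4.1}. By that theorem, every solution of the homogeneous problem \eqref{GrindEQ__4_1_} in the class $E_{p,\rho}\left(D^{+}\right)\times {}_{m}E_{p,\rho}\left(D^{-}\right)$, under the hypotheses i)--iii) and $\alpha)$, has the form $F\left(z\right)=Z\left(z\right)P_{m}\left(z\right)$, where $Z\left(\cdot\right)$ is the fixed canonical solution and $P_{m}\left(\cdot\right)$ ranges over all polynomials of degree at most $m$ (with the convention $P_{m}\equiv 0$ when $m<0$). So to prove the corollary it suffices to impose the extra normalization $F\left(\infty\right)=0$ and show that it forces $P_{m}\equiv 0$, whence $F\equiv 0$ identically.

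First I would analyze the behavior of the canonical solution $Z\left(z\right)=Z_{\left(1\right)}\left(z\right)\tilde{Z}\left(z\right)$ at $z=\infty$. Both factors are exponentials of Cauchy-type integrals $\frac{1}{2\pi i}\int_{\Gamma}\ln\left|D\left(s\right)\right|\frac{dz\left(s\right)}{z\left(s\right)-z}$ and $\frac{1}{2\pi}\int_{\Gamma}\Omega\left(s\right)\frac{dz\left(s\right)}{z\left(s\right)-z}$. Since the densities $\ln\left|D\left(\cdot\right)\right|$ and $\Omega\left(\cdot\right)$ are integrable on the bounded curve $\Gamma$ (by conditions i) and ii)), each of these Cauchy integrals tends to $0$ as $z\to\infty$, so that the exponents tend to $0$ and hence $Z\left(z\right)\to 1$ as $z\to\infty$. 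In particular $Z\left(\infty\right)=1\neq 0$, so $Z$ does not vanish at infinity and the factor carrying the value of $F$ at $\infty$ is precisely $P_{m}$.

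Now I would compute $F\left(\infty\right)=Z\left(\infty\right)P_{m}\left(\infty\right)$. If $m<0$ there is nothing to prove, since already $P_{m}\equiv 0$ and thus $F\equiv 0$. If $m\ge 0$, write $P_{m}\left(z\right)=\sum_{k=0}^{m}c_{k}z^{k}$; the condition $F\left(\infty\right)=0$ combined with $Z\left(\infty\right)=1$ forces $P_{m}\left(\infty\right)=0$. A nonzero polynomial of degree $k\le m$ with $k\ge 1$ diverges at infinity, while a nonzero constant polynomial ($k=0$) has $P_{m}\left(\infty\right)=c_{0}\neq 0$; either way $P_{m}\left(\infty\right)=0$ is impossible unless $P_{m}\equiv 0$. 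Hence $P_{m}\equiv 0$ and therefore $F\equiv 0$, which is the trivial solution.

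I expect the only delicate point to be the justification that $Z\left(\infty\right)\neq 0$, i.e. that the two Cauchy integrals in the definition of $Z$ really are finite and vanish at infinity rather than introducing a zero or a pole of $Z$ there. This rests on the integrability of $\ln\left|D\right|$ and $\Omega$ on $\Gamma$, which follows from condition i) (so that $\left|D\right|$ and $\left|D\right|^{-1}$ are bounded, whence $\ln\left|D\right|\in L_{\infty}\left(\Gamma\right)$) and condition ii) (so that $\Omega$ is bounded, being piecewise continuous with finitely many jumps on a curve of finite length); the remaining estimate is the standard fact that a Cauchy integral with integrable density over a finite curve vanishes at infinity. Everything else is immediate from Theorem~\ref{T4.1}.
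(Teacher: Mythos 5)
Your proof is correct and follows exactly the route the paper intends: the paper offers no written proof, stating only that Theorem \ref{T4.1} ``immediately implies'' the corollary, and your argument supplies precisely the implicit details --- the Cauchy-type integrals in $Z_{(1)}$ and $\tilde{Z}$ vanish at infinity (their densities $\ln\left|D\right|$ and $\Omega$ being bounded by conditions i) and ii)), so $Z\left(\infty\right)=1\neq 0$, and $F\left(\infty\right)=0$ then forces $P_{m}\equiv 0$ in the representation \eqref{GrindEQ__4_2_}. Nothing further is needed.
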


Consider some special cases of the weight function  $\rho $.

\begin{ex}\label{E4.1} Let $\rho \left(\cdot \right)$ be 
\begin{equation} \label{GrindEQ__4_3_} 
\rho \left(z\left(s\right)\right)=\prod _{k=1}^{m}\left|s-t_{k} \right|^{\alpha _{k} }   ,             
\end{equation} 
where $\left\{t_{k} \right\}_{1}^{m} \subset \left(0,S\right)-$ are distinct points,  $\left\{\alpha _{k} \right\}_{1}^{m} \subset \mathbb R-$ are some numbers. Denote the union of $\left\{s_{k} \right\}_{0}^{r} $ and $\left\{t_{k} \right\}_{1}^{m} $ as $\left\{\tau _{k} \right\}_{1}^{l} :\left\{\tau _{k} \right\}_{1}^{l} \equiv \left\{s_{k} \right\}_{0}^{r} \cup \left\{t_{k} \right\}_{1}^{m} $. Let $\chi _{A} \left(\, \cdot \, \right)-$ be the indicator of the set $A$ and $T_{k} :T_{k} \equiv \left\{\tau _{k} \right\},$ is one point set  $k=\overline{1,l}$. Set 
\begin{equation} \label{GrindEQ__4_4_} 
\beta _{k} =-\frac{p}{2\pi } \sum _{i=1}^{r}h\_ _{i}  \chi _{T_{k} } \left(s_{i} \right)+\sum _{i=1}^{m}\alpha \_ _{i}  \chi _{T_{k} } \left(t_{i} \right),   \, k=\overline{1,l} .     
\end{equation} 
Assume that 
\begin{equation} \label{GrindEQ__4_5_} 
-1<\beta _{k} <\frac{p}{q} , k=\overline{1,l}.                 
\end{equation} 
It is easy to show that under \eqref{GrindEQ__4_5_} the condition $\alpha )$ holds. As a result we get the following
\end{ex}

\begin{cor}\label{C4.2}  Let the conditions i)-iii) hold and the weight $\rho \left(\cdot \right)$ has the form \eqref{GrindEQ__4_3_}.  Assume that \eqref{GrindEQ__4_5_} holds, where $\beta _{k} $ is defined as in \eqref{GrindEQ__4_4_}. Then the general solution of the homogeneous problem \eqref{GrindEQ__4_1_} in  $E_{p,\rho } \left(D^{+} \right)\times _{m} E_{p,\rho } \left(D^{-} \right), 1<p<+\infty,$ has the representation \eqref{GrindEQ__4_2_}.
\end{cor}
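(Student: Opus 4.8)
The plan is to reduce Corollary \ref{C4.2} to Theorem \ref{T4.1} by verifying that the special weight \eqref{GrindEQ__4_3_} satisfying \eqref{GrindEQ__4_5_} fulfils the general hypothesis $\alpha)$. Once $\alpha)$ is established, Theorem \ref{T4.1} delivers the representation \eqref{GrindEQ__4_2_} verbatim, so no further analysis of the Riemann problem itself is required. Thus the entire task is the integrability verification announced (but not carried out) in Example \ref{E4.1}.

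The first and most substantial step is to convert the chordal factors in $\sigma(s)$ into arc-length factors. Because $\Gamma\in{\rm LR}$ is piecewise Lyapunov or of bounded rotation \emph{with no cusps}, near each singular point the chord and the arc are comparable, i.e. there are constants $0<c_{1}\le c_{2}<\infty$ with $c_{1}\left|s-s_{k}\right|\le\left|z\left(s_{k}\right)-z\left(s\right)\right|\le c_{2}\left|s-s_{k}\right|$ for $s$ near $s_{k}$. Consequently $\sigma\left(s\right)$ differs from $\prod_{k=0}^{r}\left|s-s_{k}\right|^{-h_{k}/\left(2\pi\right)}$ only by a factor bounded above and below by positive constants on $\left[0,S\right]$. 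I expect this comparability, which hinges precisely on the absence of cusps, to be the main obstacle; everywhere else the argument is an elementary power-integrability estimate.

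With this reduction I would substitute into the two integrals of $\alpha)$ and collect the exponent of $\left|s-\tau_{k}\right|$ at each point of the combined singular set $\left\{\tau_{k}\right\}_{1}^{l}$. Using the definition \eqref{GrindEQ__4_4_}, the integrand $\sigma^{pp_{1}}\rho^{p_{1}}$ behaves near $\tau_{k}$ like $\left|s-\tau_{k}\right|^{p_{1}\beta_{k}}$, while $\sigma^{-qp_{2}}\rho^{-\left(q/p\right)p_{2}}$ behaves like $\left|s-\tau_{k}\right|^{-\left(q/p\right)p_{2}\beta_{k}}$. Since $\left\{\tau_{k}\right\}_{1}^{l}$ is finite and both integrands are continuous and bounded away from these points on the finite-length curve, the integrals in $\alpha)$ converge once $p_{1}\beta_{k}>-1$ and $-\left(q/p\right)p_{2}\beta_{k}>-1$ hold for every $k$, that is
\[-\frac{1}{p_{1}}<\beta_{k}<\frac{p}{q\,p_{2}},\qquad k=\overline{1,l}.\]

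Finally I would choose the auxiliary exponents. As $p_{1}\to1^{+}$ the lower bound $-1/p_{1}\to-1^{+}$, and as $p_{2}\to1^{+}$ the upper bound $p/\left(q\,p_{2}\right)$ tends to $p/q$ from below. Hence the strict inequalities \eqref{GrindEQ__4_5_}, namely $-1<\beta_{k}<p/q$ for the finitely many indices $k$, allow one to pick $p_{1},p_{2}\in\left(1,+\infty\right)$ close enough to $1$ so that $-1/p_{1}<\beta_{k}<p/\left(q\,p_{2}\right)$ holds simultaneously for all $k$. This verifies condition $\alpha)$, whereupon Theorem \ref{T4.1} yields the representation \eqref{GrindEQ__4_2_} and completes the proof.
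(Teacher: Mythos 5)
Your proposal is correct and follows exactly the route the paper intends: the paper's ``proof'' of Corollary \ref{C4.2} is just the remark in Example \ref{E4.1} that condition $\alpha)$ follows from \eqref{GrindEQ__4_5_}, combined with Theorem \ref{T4.1}. You supply precisely the omitted verification --- chord--arc comparability of $\left|z\left(s_{k}\right)-z\left(s\right)\right|$ with $\left|s-s_{k}\right|$ (valid since $\Gamma\in{\rm LR}$ has no cusps), the local exponents $p_{1}\beta_{k}$ and $-\left(q/p\right)p_{2}\beta_{k}$, and the choice of $p_{1},p_{2}$ near $1$ --- all of which is accurate.
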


\begin{ex}\label{E4.2} As the weight $\rho \left(\cdot \right)$  we again take \eqref{GrindEQ__4_3_}, but now we assume that $\left\{s_{k} \right\}_{1}^{r} \cap \left\{t_{k} \right\}_{1}^{m} =\emptyset $. 
\end{ex}

In that case we get the following

\begin{cor}\label{C4.3}  Let the conditions of the Corollary \ref{C4.2} hold and $\left\{s_{k} \right\}_{1}^{r} \cap \left\{t_{k} \right\}_{1}^{m} =\emptyset $. If 
\[-\frac{1}{q} <\frac{h_{k} }{2\pi } <\frac{1}{p} \, ,\, k=\overline{1,r}\, ; -1<\alpha _{i} <\frac{q}{p} \, ,\, i=\overline{1,m}\, \, ,\] 
then the general solution of the problem \eqref{GrindEQ__4_1_} in $E_{p,\rho } \left(D^{+} \right)\times _{m} E_{p,\rho } \left(D^{-} \right), 1<p<+\infty,$ has the representation \eqref{GrindEQ__4_2_}.
\end{cor}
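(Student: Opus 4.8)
The plan is to obtain Corollary~\ref{C4.3} as a direct specialization of Corollary~\ref{C4.2}. Since conditions i)--iii) and the weight form \eqref{GrindEQ__4_3_} are already assumed, and since \eqref{GrindEQ__4_2_} is exactly the conclusion of Corollary~\ref{C4.2}, it suffices to show that the separate inequalities imposed on the jumps $h_k$ and on the exponents $\alpha_i$, together with the disjointness $\{s_k\}_1^r\cap\{t_k\}_1^m=\emptyset$, force the single hypothesis \eqref{GrindEQ__4_5_}, i.e. $-1<\beta_k<\frac{p}{q}$ for every $k=\overline{1,l}$. Once this is verified, Corollary~\ref{C4.2} yields the representation \eqref{GrindEQ__4_2_} at once.

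The first and main step is to compute $\beta_k$ from \eqref{GrindEQ__4_4_} under the disjointness assumption. Because $\{\tau_k\}_1^l=\{s_k\}_0^r\cup\{t_k\}_1^m$ is a union of two families that meet in no interior point, for each fixed $k$ the one-point set $T_k=\{\tau_k\}$ contains at most one of the $s_i$ and at most one of the $t_i$, and never one of each simultaneously. Hence at most one characteristic factor $\chi_{T_k}(s_i)$, $\chi_{T_k}(t_i)$ in \eqref{GrindEQ__4_4_} is nonzero, and every $\beta_k$ collapses to a single term: either $\beta_k=-\frac{p}{2\pi}h_i$ when $\tau_k$ is the jump point $s_i$ of $\theta$, or $\beta_k=\alpha_i$ when $\tau_k$ is the singular point $t_i$ of the weight.

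It then remains to substitute these two values into \eqref{GrindEQ__4_5_} and read off the elementary consequences. In the jump case the double inequality $-1<-\frac{p}{2\pi}h_i<\frac{p}{q}$, after dividing through by $-p$ (which reverses the inequalities), becomes precisely $-\frac{1}{q}<\frac{h_i}{2\pi}<\frac{1}{p}$, the first hypothesis of the corollary. In the weight case \eqref{GrindEQ__4_5_} reduces directly to the displayed two-sided bound on $\alpha_i$. Consequently every $\beta_k$ lies in $\left(-1,\frac{p}{q}\right)$, so \eqref{GrindEQ__4_5_} holds and Corollary~\ref{C4.2} applies.

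The argument is short, and I expect the only delicate point to be the bookkeeping behind the single-term form of $\beta_k$: one must check that disjointness genuinely prevents a point $\tau_k$ from carrying both a jump of $\theta$ and a zero or pole of $\rho$, so that the two families of local exponents never superpose in \eqref{GrindEQ__4_4_}. I would also treat the endpoint $s_0=0=S$ explicitly: since by \eqref{GrindEQ__4_3_} every $t_k$ lies in the open interval $(0,S)$, this point is a pure jump point of $\theta$ and is handled exactly as the interior jump points $s_1,\dots,s_r$. With the decoupling in place the reduction to Corollary~\ref{C4.2} is immediate.
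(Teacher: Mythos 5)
Your proposal follows exactly the route the paper intends: Corollary~\ref{C4.3} is given no separate proof in the paper and is presented as an immediate specialization of Corollary~\ref{C4.2}, and your computation --- disjointness of $\{s_k\}$ and $\{t_k\}$ makes each $\beta_k$ in \eqref{GrindEQ__4_4_} a single term, either $-\frac{p}{2\pi}h_i$ or $\alpha_i$, after which \eqref{GrindEQ__4_5_} decouples into the two displayed families of inequalities --- is precisely that implicit argument. The jump case is handled correctly: $-1<-\frac{p}{2\pi}h_i<\frac{p}{q}$ is indeed equivalent, upon dividing by $-p$, to $-\frac{1}{q}<\frac{h_i}{2\pi}<\frac{1}{p}$.

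One point deserves a concrete flag. In the weight case your derivation yields $-1<\alpha_i<\frac{p}{q}$, whereas the corollary as printed states $-1<\alpha_i<\frac{q}{p}$; since $\frac{p}{q}=p-1$ and $\frac{q}{p}=\frac{1}{p-1}$, these coincide only at $p=2$, and for $1<p<2$ the printed hypothesis is strictly \emph{weaker} than \eqref{GrindEQ__4_5_}, so the stated bound on $\alpha_i$ would not imply \eqref{GrindEQ__4_5_} and the reduction to Corollary~\ref{C4.2} would fail as literally written. Your sentence that \eqref{GrindEQ__4_5_} ``reduces directly to the displayed two-sided bound on $\alpha_i$'' silently identifies $\frac{p}{q}$ with $\frac{q}{p}$. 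Almost certainly the paper's display is a typo (the same $\frac{q}{p}$ appears in Theorem~\ref{T4.2}): the bound $\alpha_i<\frac{p}{q}$ is the one consistent with \eqref{GrindEQ__4_4_}, with condition $\alpha)$, and with the Muckenhoupt $A_p$ range for power weights, so your derivation is the correct one --- just state the bound as $\frac{p}{q}$ rather than asserting literal agreement with the printed display. A smaller bookkeeping remark: your (sensible) treatment of the endpoint $s_0$ tacitly requires the first sum in \eqref{GrindEQ__4_4_} to include $i=0$, so that $h_0$ is constrained; read literally the sum starts at $i=1$, under which reading $h_0$ would be unconstrained and condition $\alpha)$ could fail at $s=0$ through the factor $\left|z(0)-z(s)\right|^{-h_0/(2\pi)}$ in $\sigma$. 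Your reading is the right one, but it should be made explicit rather than assumed.
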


Now consider the non-homogeneous Riemann problem 
\begin{equation} \label{GrindEQ__4_6_} 
F^{+} \left(z\left(s\right)\right)-D\left(s\right)F^{-} \left(z\left(s\right)\right)=g\left(z\left(s\right)\right)\, ,\, s\in \left(0,S\right),         
\end{equation} 
where $g\in L_{p,\rho } \left(\Gamma \right)-$is a given function. The solution of the problem \eqref{GrindEQ__4_6_} is a pair of functions 
\[\left(F^{+} \left(z\right);F^{-} \left(z\right)\right)\in E_{p,\rho } \left(D^{+} \right)\times _{m} E_{p,\rho } \left(D^{-} \right),\] 
whose boundary values $F^{\pm } $ on $\Gamma $ a.e. hold \eqref{GrindEQ__4_6_}.

 In \cite{46} it was proved the following

\begin{thm}\label{T4.2}
  Let the conditions  i)-iii) hold. Given the weight $\rho \left(\cdot \right)$  of the form \eqref{GrindEQ__4_3_} and the numbers  $\left\{\beta _{k} \right\}_{1}^{l} $ are defined   as in \eqref{GrindEQ__4_4_}. Let \eqref{GrindEQ__4_5_}
   holds and  $\alpha _{k} <\frac{q}{p} \, ,\, k=\overline{1,m}$. Then the general solution of the problem \eqref{GrindEQ__4_6_} in $E_{p,\rho } \left(D^{+} \right)\times _{m} E_{p,\rho } \left(D^{-} \right), 1<p<+\infty,$ has the form 
\[F\left(z\right)=F_{0} \left(z\right)+F_{1} \left(z\right),\] 
where $F_{0} \left(z\right)-$ is a general solution of the corresponding homogeneous problem and $F_{1} \left(z\right)$ is expressed as 
\begin{equation} \label{GrindEQ__4_7_} 
F_{1} \left(z\right)\equiv \frac{Z\left(z\right)}{2\pi i} \int _{\Gamma }\frac{g\left(\xi \right)d\xi }{Z^{+} \left(\xi \right)\left(\xi -z\right)},              
\end{equation} 
where $Z\left(z\right)-$ is the canonical solution, $m\ge 0-$is an integer.
\end{thm}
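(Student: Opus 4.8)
The plan is to follow the classical Gakhov reduction, adapted to the weighted classes, by dividing out the canonical solution of the homogeneous problem. Since the weight has the form \eqref{GrindEQ__4_3_} and \eqref{GrindEQ__4_5_} holds, condition $\alpha)$ is satisfied (as noted in the Example preceding Corollary \ref{C4.2}), so Theorem \ref{T4.1} applies and furnishes the canonical solution $Z$: it is analytic and zero-free in $D^{+}\cup D^{-}$, and its boundary values obey the homogeneous relation $Z^{+}\left(z\left(s\right)\right)=D\left(s\right)Z^{-}\left(z\left(s\right)\right)$ a.e. on $\Gamma$. Introducing $\Phi\equiv F/Z$ and dividing \eqref{GrindEQ__4_6_} by $Z^{+}$, the boundary condition collapses to the jump relation
\[
\Phi^{+}\left(z\left(s\right)\right)-\Phi^{-}\left(z\left(s\right)\right)=\frac{g\left(z\left(s\right)\right)}{Z^{+}\left(z\left(s\right)\right)},\quad s\in\left(0,S\right),
\]
so that $\Phi$ solves a pure jump problem across $\Gamma$ with datum $g/Z^{+}$.

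First I would produce a particular solution. The natural candidate is the Cauchy-type integral
\[
\Psi\left(z\right)\equiv\frac{1}{2\pi i}\int_{\Gamma}\frac{g\left(\xi\right)}{Z^{+}\left(\xi\right)\left(\xi-z\right)}\,d\xi,
\]
so that $F_{1}=Z\Psi$ is precisely \eqref{GrindEQ__4_7_}. Provided $g/Z^{+}$ is integrable on $\Gamma$, the Cauchy transform lands in $E_{1}\left(D^{+}\right)$ and ${}_{0}E_{1}\left(D^{-}\right)$, and the Sokhotski--Plemelj formulas give $\Psi^{+}-\Psi^{-}=g/Z^{+}$ a.e., whence $F_{1}$ satisfies \eqref{GrindEQ__4_6_}. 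Once $F_{1}$ is known to be an admissible solution, the structure of the general solution is immediate: if $F$ is any solution of \eqref{GrindEQ__4_6_} in $E_{p,\rho}\left(D^{+}\right)\times {}_{m}E_{p,\rho}\left(D^{-}\right)$, then $F-F_{1}$ satisfies the homogeneous equation \eqref{GrindEQ__4_1_}, and Theorem \ref{T4.1} forces $F-F_{1}=ZP_{m}=F_{0}$ with $P_{m}$ a polynomial of degree $\le m$. This yields $F=F_{0}+F_{1}$ together with its generality in one step.

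The hard part is to verify that $F_{1}=Z\Psi$ genuinely belongs to $E_{p,\rho}\left(D^{+}\right)\times {}_{m}E_{p,\rho}\left(D^{-}\right)$, i.e. that $\left\|F_{1}^{\pm}\right\|_{L_{p,\rho}\left(\Gamma\right)}<+\infty$. By Plemelj, $F_{1}^{+}=\tfrac{1}{2}g+Z^{+}S_{\Gamma}\left(g/Z^{+}\right)$, so the finiteness of $\left\|F_{1}^{+}\right\|_{L_{p,\rho}}$ reduces, after absorbing the factor $Z^{+}$ into the weight, to the estimate
\[
\int_{\Gamma}\left|Z^{+}\left(\xi\right)S_{\Gamma}\left(g/Z^{+}\right)\left(\xi\right)\right|^{p}\rho\left(\xi\right)\left|d\xi\right|\le C\int_{\Gamma}\left|g\left(\xi\right)\right|^{p}\rho\left(\xi\right)\left|d\xi\right|,
\]
that is, to the boundedness of $S_{\Gamma}$ in $L_{p,w}\left(\Gamma\right)$ with the composite weight $w\equiv\left|Z^{+}\right|^{p}\rho$. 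By Theorem \ref{D} this holds if and only if $w\in A_{p}\left(\Gamma\right)$, the term $\tfrac{1}{2}g$ being harmless since $g\in L_{p,\rho}$. The estimate for $F_{1}^{-}$ is analogous, using $Z^{-}$ and the boundedness of $Z^{-}/Z^{+}=1/D$ granted by condition i).

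I expect the decisive obstacle to be the verification of the Muckenhoupt condition $\left|Z^{\pm}\right|^{p}\rho\in A_{p}\left(\Gamma\right)$ together with the attendant integrability of the free term. Here the power-type behaviour of the canonical solution is essential: near each node $\xi_{k}=z\left(s_{k}\right)$ the factor $\left|Z^{\pm}\right|$ grows like $\left|z-\xi_{k}\right|^{-h_{k}/\left(2\pi\right)}$ (this is exactly what $\sigma$ encodes), while $\rho$ contributes the exponent $\alpha_{k}$ at each $t_{k}$ through \eqref{GrindEQ__4_3_}. Collecting the local exponents of $w=\left|Z^{+}\right|^{p}\rho$ at the merged node set $\left\{\tau_{k}\right\}_{1}^{l}$ reproduces precisely the quantities $\beta_{k}$ of \eqref{GrindEQ__4_4_}, and, since $p/q=p-1$, the two-sided bounds \eqref{GrindEQ__4_5_} place each such exponent in the admissible interval $\left(-1,p-1\right)$ for membership in $A_{p}\left(\Gamma\right)$. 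The additional upper bound $\alpha_{k}<q/p$ is then used, jointly with \eqref{GrindEQ__4_5_}, to guarantee that $g/Z^{+}$ is integrable on $\Gamma$ and that the Cauchy-type integral \eqref{GrindEQ__4_7_} defines a function in the base Smirnov class $E_{1}$, so that the Sokhotski--Plemelj step and the membership argument above are fully justified. Disentangling which of these inequalities controls which requirement, and checking that the auxiliary exponents $p_{1},p_{2}$ of condition $\alpha)$ can be chosen consistently, is the genuinely delicate bookkeeping; the remainder is the standard reduction.
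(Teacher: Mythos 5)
A structural point first: the paper never proves Theorem \ref{T4.2} --- it is quoted from \cite{46} (``In \cite{46} it was proved the following''), so there is no in-paper argument to compare you against; the comparison must be with the standard proof in the cited literature, and your proposal is exactly that standard Gakhov-type scheme. You factor out the canonical solution $Z$ furnished by the homogeneous theory (Theorem \ref{T4.1}), reduce \eqref{GrindEQ__4_6_} to the jump problem $\Phi^{+}-\Phi^{-}=g/Z^{+}$, take the Cauchy transform $\Psi$ as a particular solution so that $F_{1}=Z\Psi$ is \eqref{GrindEQ__4_7_}, and get generality in one stroke: $F-F_{1}$ solves \eqref{GrindEQ__4_1_}, hence equals $ZP_{m}$ by Theorem \ref{T4.1}. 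Your localization of the analytic content is also the right one: by Sokhotski--Plemelj, $F_{1}^{+}=\frac{1}{2}g+Z^{+}S_{\Gamma}\left(g/Z^{+}\right)$, so membership of $F_{1}$ in the weighted class is exactly boundedness of $S_{\Gamma}$ in $L_{p,w}\left(\Gamma\right)$ with $w=\left|Z^{+}\right|^{p}\rho$; since $\left|Z^{\pm}\right|$ behaves like $\sigma$ near the nodes, the local exponents of $w$ at the merged node set are precisely the $\beta_{k}$ of \eqref{GrindEQ__4_4_}, and as $p/q=p-1$, condition \eqref{GrindEQ__4_5_} is precisely the power-weight $A_{p}\left(\Gamma\right)$ criterion, consistent with Theorem \ref{D}. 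This is the correct skeleton and the correct identification of where each hypothesis enters.

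Two caveats. First, the sentence ``provided $g/Z^{+}$ is integrable on $\Gamma$, the Cauchy transform lands in $E_{1}$'' is false as stated: the Cauchy integral of a merely $L_{1}$ density in general lies only in the classes $E_{\delta}$, $\delta<1$. What rescues the step --- and what condition $\alpha)$ with its auxiliary exponents $p_{1},p_{2}$ is engineered to deliver via H\"older --- is that $g/Z^{+}\in L_{r}\left(\Gamma\right)$ for some $r>1$, and Cauchy transforms of $L_{r}$ densities over regular curves do lie in $E_{r}\subset E_{1}$; you gesture at this later when invoking $\alpha_{k}<q/p$, but the $L_{1}$ claim should be replaced by the $L_{r}$ one outright. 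Second, the precise role of the extra hypothesis $\alpha_{k}<q/p$ (note $q/p\ne p/q$ unless $p=2$, and at the points $t_{k}$ disjoint from the jumps of $\theta$ the bound \eqref{GrindEQ__4_5_} already gives $\alpha_{k}<p/q$) is left by you as ``delicate bookkeeping''; your guess that it controls the integrability of $g/Z^{+}$ and the $E_{1}$-membership of \eqref{GrindEQ__4_7_} is plausible but is not carried out, and that computation is the actual substance of \cite{46}. So: right approach, correct reduction and correct placement of the weighted estimates, with the decisive verifications honestly flagged but deferred --- a sound plan rather than a complete proof.
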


This theorem implies the following 

\begin{cor}\label{C4.4} Let all conditions of Theorem \ref{T4.2} hold. If $F\left(\infty \right)=0$ then the problem \eqref{GrindEQ__4_6_} has a unique solution in the class $E_{p,\rho } \left(D^{+} \right)\times _{m} E_{p,\rho } \left(D^{-} \right), 1<p<+\infty,$ expressed as in \eqref{GrindEQ__4_7_}.
\end{cor}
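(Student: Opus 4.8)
The plan is to obtain this corollary as a direct consequence of Theorem~\ref{T4.2} together with Corollary~\ref{C4.1}, splitting the argument into an existence and a uniqueness part. For existence, I would take the function $F_{1}$ defined by \eqref{GrindEQ__4_7_}; by Theorem~\ref{T4.2} (applied with vanishing homogeneous part $F_{0}\equiv 0$) it is a solution of \eqref{GrindEQ__4_6_} lying in $E_{p,\rho}\left(D^{+}\right)\times {}_{m}E_{p,\rho}\left(D^{-}\right)$. The one point that requires checking is that this particular solution already satisfies the normalization $F_{1}\left(\infty\right)=0$. To verify this I would write $F_{1}\left(z\right)=Z\left(z\right)\Psi\left(z\right)$ with $\Psi\left(z\right)=\frac{1}{2\pi i}\int_{\Gamma}\frac{g\left(\xi\right)d\xi}{Z^{+}\left(\xi\right)\left(\xi-z\right)}$, observe that the Cauchy-type integral obeys $\Psi\left(z\right)=O\left(1/z\right)\to 0$ as $z\to\infty$ (the density $g/Z^{+}$ being integrable on $\Gamma$ by condition $\alpha)$ and the construction of $Z$), and recall that the canonical solution $Z=Z_{\left(1\right)}\tilde{Z}$ is built from Cauchy integrals whose kernels vanish at infinity, so that $Z_{\left(1\right)}\left(\infty\right)=\tilde{Z}\left(\infty\right)=1$ and hence $Z\left(\infty\right)=1$ is finite. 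Consequently $F_{1}\left(\infty\right)=Z\left(\infty\right)\cdot 0=0$, so a solution with the prescribed behaviour at infinity exists.

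For uniqueness I would argue as follows. If two pairs $\left(F^{+};F^{-}\right)$ and $\left(G^{+};G^{-}\right)$ solve \eqref{GrindEQ__4_6_} in the stated class and both vanish at infinity, then their difference solves the homogeneous problem \eqref{GrindEQ__4_1_} and also satisfies $\left(F-G\right)\left(\infty\right)=0$; by Corollary~\ref{C4.1} this difference must be identically zero. This already forces the solution to coincide with $F_{1}$: by Theorem~\ref{T4.1} every homogeneous solution has the form $Z\left(z\right)P_{m}\left(z\right)$ with $\deg P_{m}\le m$, and since $Z\left(\infty\right)=1$ such a term cannot tend to $0$ at infinity unless $P_{m}\equiv 0$; combined with $F_{1}\left(\infty\right)=0$, the normalization $F\left(\infty\right)=0$ selects exactly the homogeneous part $F_{0}\equiv 0$. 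Hence the unique solution is precisely the one given by \eqref{GrindEQ__4_7_}.

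The main (and essentially the only) obstacle is the verification that $F_{1}\left(\infty\right)=0$; everything else is a formal combination of the quoted results. This hinges on two facts: the normalization $Z\left(\infty\right)=1$ of the canonical solution, which follows from the explicit Cauchy-integral representation of $Z_{\left(1\right)}$ and $\tilde{Z}$, and the $O\left(1/z\right)$ decay of the Cauchy-type integral $\Psi$ at infinity. Once these are in place, Theorems~\ref{T4.1}--\ref{T4.2} and Corollary~\ref{C4.1} yield the statement immediately.
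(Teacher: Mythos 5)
Your proposal is correct and follows essentially the same route as the paper, which states Corollary~\ref{C4.4} as an immediate consequence of Theorem~\ref{T4.2}: the general solution $F=F_{0}+F_{1}$ combined with the normalization $F\left(\infty\right)=0$ kills the homogeneous part $F_{0}=Z P_{m}$ (equivalently, uniqueness via Corollary~\ref{C4.1}), leaving exactly \eqref{GrindEQ__4_7_}. Your explicit verification that $Z\left(\infty\right)=1$ and that the Cauchy-type integral vanishes at infinity simply fills in the details the paper leaves implicit.
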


\section{Basisness of $p$-Faber polynomials in the weighted Smirnov spaces}

In this section the basisness of generalized Faber polynomials in weighted Smirnov spaces is established under the condition that the weight and the boundary values of the conformal mapping performing an isomorphism between the unit circle and the simply-connected region under consideration satisfy the Muckenhoupt condition.

 Thus for the function $f\in L_{p;\rho } \left(\Gamma \right)$ we set
\[f_{+} \left(w\right)=f\left[\varphi _{-1} \left(w\right)\right]\left(\varphi '_{-1} \left(w\right)\right)^{\frac{1}{p} } , f_{-} \left(w\right)=f\left[\psi _{-1} \left(w\right)\right]\left(\psi '_{-1} \left(w\right)\right)^{\frac{2}{p} } \, ,\, \, w\in \partial \omega ,\]
and for weight function $\rho\left(\cdot\right)$ on $\Gamma$ define the following weight functions on $\partial \omega$:

\begin{equation} \label{GrindEQ__5_0_} 
 \rho _{+}\left(w\right)=\rho \left[\varphi_{-1}\left(w\right)\right]; \rho _{-}\left(w\right)=\rho \left[\psi_{-1}\left(w\right)\right], w \in \partial \omega
 \end{equation}
 
Consider the following operator $T_{p}^{+} $:
\[\left(T_{p}^{+} f\right)\left(z\right)=\frac{1}{2\pi i} \int _{\partial \omega }\frac{f\left(\xi \right)\left[\varphi '_{-1} \left(\xi \right)\right]^{\frac{1}{q} } }{\varphi _{-1} \left(\xi \right)-z}  \, \, d\xi \, ,\, \, z\in D,\] 
where $f\in H_{p,\rho _{+} }^{+} $.  In sequel we will use the following theorem which is similarly proved  as the Morrey-Hardy case   considered in the  paper \cite{6}.

\begin{thm}\label{T5.1} Let $\Gamma -$be a regular curve and $1<p<+\infty $. If $\rho \in A_{p} \left(\Gamma \right)$ and $\rho _{+} \in A_{p} \left(T\right)$,  then $T_{p}^{+} :H_{p,\rho _{+} }^{+} \leftrightarrow E_{p,\rho } \left(D^{+} \right)$ is a one-to-one operator onto $E_{p,\rho } \left(D^{+} \right)$.
\end{thm}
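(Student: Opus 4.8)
The plan is to realise $T_p^+$ as the composition of a conformal substitution with the Cauchy projection onto $E_{p,\rho}\left(D^+\right)$, and then to deduce bijectivity from the two corresponding projection splittings of $L_{p,\rho}\left(\Gamma\right)$ and $L_{p,\rho_+}\left(T\right)$. First I would put $w=\varphi\left(\tau\right)$, $\tau=\varphi_{-1}\left(\xi\right)$ in the defining integral. Using $\varphi'_{-1}\left(\varphi\left(\tau\right)\right)=1/\varphi'\left(\tau\right)$ and $\tfrac1p+\tfrac1q=1$, the factor $\left[\varphi'_{-1}\left(\xi\right)\right]^{1/q}d\xi$ becomes $\left[\varphi'\left(\tau\right)\right]^{-1/q}\varphi'\left(\tau\right)d\tau=\left[\varphi'\left(\tau\right)\right]^{1/p}d\tau$, so that
\[
\bigl(T_p^+ f\bigr)\left(z\right)=\frac{1}{2\pi i}\int_{\Gamma}\frac{g\left(\tau\right)}{\tau-z}\,d\tau,\qquad g\left(\tau\right)=f\bigl(\varphi\left(\tau\right)\bigr)\,\bigl[\varphi'\left(\tau\right)\bigr]^{1/p},\qquad z\in D^+ .
\]
A direct change of variables (using $\left|\varphi'\left(\tau\right)\right|\,\left|d\tau\right|=\left|dw\right|$ on the boundary and $\rho\left(\varphi_{-1}\left(w\right)\right)=\rho_+\left(w\right)$) shows that the assignment $f\mapsto g$, i.e. the map $V:L_{p,\rho_+}\left(T\right)\to L_{p,\rho}\left(\Gamma\right)$ defined by $\left(Vf\right)\left(\tau\right)=f\bigl(\varphi\left(\tau\right)\bigr)\bigl[\varphi'\left(\tau\right)\bigr]^{1/p}$, is an isometric isomorphism. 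Thus $T_p^+ f$ is exactly the Cauchy integral of $g$ restricted to $D^+$; by the Plemelj--Sokhotski formulae its non-tangential interior boundary values are given by a bounded operator $P^+$ on $L_{p,\rho}\left(\Gamma\right)$ whenever $S_\Gamma$ is bounded, and since $\Gamma$ is regular and $\rho\in A_p\left(\Gamma\right)$ this is guaranteed by Theorem \ref{D}. Hence $T_p^+ f\in E_{p,\rho}\left(D^+\right)$ with $\left\|T_p^+ f\right\|_{E_{p,\rho}\left(D^+\right)}\le\left\|P^+\right\|\left\|f\right\|_{H_{p,\rho_+}^+}$, i.e. $T_p^+=P^+\circ V$ is bounded.

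Next I would set up the two projection decompositions. On the circle $\rho_+\in A_p\left(T\right)$ makes the Riesz projection bounded (Theorem \ref{D} on $T$, cf.\ Theorem \ref{T2.1}), so $L_{p,\rho_+}\left(T\right)=L_{p,\rho_+}^+\oplus N_+$, where $N_+$ denotes the boundary values of exterior Hardy functions vanishing at infinity. On $\Gamma$ the boundedness of $P^+$ yields $L_{p,\rho}\left(\Gamma\right)=E_{p,\rho}\left(D^+\right)\oplus N$, where $N$ is the space of boundary values of functions in $E_{p,\rho}\left(D^-\right)$ that vanish at infinity. The structural heart of the argument is that $V$ intertwines the two ``negative'' summands, $V\left(N_+\right)=N$: if $h\in N_+$ is the boundary value of a function analytic in $O_1^-\left(0\right)$ vanishing at $\infty$, then $h\circ\varphi$ is analytic in $D^-$ and vanishes at $\infty$ (because $\varphi:D^-\to O_1^-\left(0\right)$), and multiplication by the analytic branch $\bigl[\varphi'\bigr]^{1/p}$ keeps it in the boundary values of $E_{p,\rho}\left(D^-\right)$; the inverse substitution gives the reverse inclusion, and since $V$ is an isomorphism, equality holds. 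Consequently $V\bigl(L_{p,\rho_+}^+\bigr)$ is a (generally different) closed complement of $N$ in $L_{p,\rho}\left(\Gamma\right)$.

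Bijectivity then follows from the elementary fact that if a Banach space splits as $X=A\oplus C=B\oplus C$ with the same summand $C$, then the projection onto $B$ along $C$, restricted to $A$, is a linear isomorphism of $A$ onto $B$. Applying this with $A=V\bigl(L_{p,\rho_+}^+\bigr)$, $B=E_{p,\rho}\left(D^+\right)$, $C=N$ and the projection $P^+$, I conclude that $P^+\big|_{V\left(L_{p,\rho_+}^+\right)}$ maps onto $E_{p,\rho}\left(D^+\right)$ bijectively, whence $T_p^+=P^+\circ V$ is one-to-one and onto. Concretely, injectivity reads: $T_p^+ f=0$ forces $Vf\in N$, hence $f\in N_+$; but $f\in L_{p,\rho_+}^+$ and $L_{p,\rho_+}^+\cap N_+=\left\{0\right\}$ by the uniqueness theorem for analytic functions, so $f=0$. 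Surjectivity reads: given $\Phi\in E_{p,\rho}\left(D^+\right)$, split its boundary values as $\Phi^+=Vf_0+\eta$ along $L_{p,\rho}\left(\Gamma\right)=V\bigl(L_{p,\rho_+}^+\bigr)\oplus N$ with $f_0\in L_{p,\rho_+}^+$, $\eta\in N$; then $T_p^+ f_0=P^+\Phi^+=\Phi^+$. Since both spaces are Banach (Lemma \ref{L3.1} and Proposition \ref{P2.1}) and $T_p^+$ is a bounded bijection, the open mapping theorem furnishes continuity of the inverse for free.

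I expect the main obstacle to be the intertwining step $V\left(N_+\right)=N$, that is, verifying that the conformal substitution together with the branch $\bigl[\varphi'\bigr]^{1/p}$ genuinely preserves membership in the weighted Smirnov classes and carries the exterior Hardy space on the disc onto the boundary values of $E_{p,\rho}\left(D^-\right)$ vanishing at infinity (and dually recovers $E_{p,\rho}\left(D^+\right)$). This is precisely where the regularity of $\Gamma$ and the Muckenhoupt hypotheses on \emph{both} $\rho$ and $\rho_+$ must be used in tandem, through the boundedness of the singular integral operators on $\Gamma$ and on $T$ supplied by Theorem \ref{D}; by comparison, the change-of-variables isometry for $V$ and the final splitting argument are routine.
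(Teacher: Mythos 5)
Your overall architecture is sound and is genuinely different in organization from the paper's: the paper does not prove Theorem \ref{T5.1} directly but refers to the Morrey--Hardy analogue in \cite{6}, and the argument it does carry out in full (for the companion operator $T_p^-$ inside the proof of Theorem \ref{T5.2}) is \emph{constructive} --- boundedness via the Sokhotski--Plemelj formula and Theorem \ref{D}, injectivity read off from the integral representation, and surjectivity by explicitly exhibiting the preimage $g\left(w\right)=F\left[\psi_{-1}\left(w\right)\right]w^{2/p}\left[\psi'_{-1}\left(w\right)\right]^{1/p}$ of a given $F$, verifying its membership in the weighted Hardy class, and then invoking Banach's open mapping theorem. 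Your route instead factors $T_p^+=P^+\circ V$ through the substitution isometry and deduces bijectivity abstractly from two direct-sum decompositions sharing the summand $N$; the change of variables, the isometry of $V$, and the complemented-subspace lemma are all correct, and this packaging is arguably cleaner than the constructive one.

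However, as written the proposal has a genuine gap: its two analytic pillars are asserted, not proved, and they are exactly where all the work lies. First, to place $T_p^+f$ in $E_{p,\rho}\left(D^+\right)$ (and to get the splitting $L_{p,\rho}\left(\Gamma\right)=E_{p,\rho}\left(D^+\right)\oplus N$) you need that the Cauchy-type integral of an $L_{p,\rho}$-density over a regular curve with $\rho\in A_p\left(\Gamma\right)$ belongs to the Smirnov class $E_1\left(D^+\right)$; the Plemelj formula together with boundedness of $S_\Gamma$ (Theorem \ref{D}) only controls the boundary values, while $E_1$-membership of the interior function is a separate theorem of the type underlying Section 4 (cf. \cite{31,46}), which must be invoked. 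Second, the intertwining $V\left(N_+\right)=N$ --- that composition with the conformal map times the root factor carries exterior Hardy classes onto boundary values of ${}_{-1}E_{p,\rho}\left(D^-\right)$ and back --- is precisely the hard lemma the paper proves in Theorem \ref{T5.2} for $\psi$: one first shows the transplanted function lies in $H_\delta^+$ for small $\delta$ satisfying \eqref{GrindEQ__5_7_} (a H\"older argument with auxiliary exponents $p_1,q_1$, using $\psi'_{-1}\in H_1^+$ from \cite{45}), then that its boundary values are in $L_1$, and only then concludes $H_1^+$-membership by Smirnov's theorem. Your justification ("$h\circ\varphi$ is analytic in $D^-$ and vanishes at $\infty$, and multiplication by the branch keeps it in the boundary values of $E_{p,\rho}\left(D^-\right)$") skips this entirely: analyticity plus $L_{p,\rho}$ boundary data does not by itself give Smirnov-class membership over a non-smooth domain, which is why the quantitative $H_\delta$ step exists. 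You correctly flag this as the main obstacle, but flagging it is not filling it; once points (i) and (ii) are supplied, your proof would be complete --- and filling them essentially reproduces the analytic core of the paper's Theorem \ref{T5.2} argument and of \cite{6}.
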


Using this theorem we can  prove the following 

\begin{thm}\label{T5.2} Let $\Gamma $ be a regular curve and $0\in int$ $ \Gamma$. If $\rho _{\pm } \left(\cdot \right)\in A_{p} \left(T\right)$, $\, \rho \left(\cdot \right)\in A_{p} \left(\Gamma \right)$, $1<p<+\infty $, then the system of generalized $p$-Faber polynomials $\left\{F_{p,n}^{+} \right\}_{n\in \mathbb Z_{+} } $ and $\left\{F_{p,n}^{-} \right\}_{n\in \mathbb N} $ form a basis of the spaces $E_{p,\rho } \left(D^{+} \right)$ and ${}_{-1} E_{p,\rho } \left(D^{-} \right), 1<p<+\infty$, respectively. 
\end{thm}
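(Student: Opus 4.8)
The plan is to transport the elementary monomial bases furnished by Theorem \ref{T2.1} through the isomorphisms $T_p^{\pm}$, using the standard fact that a linear homeomorphism carries a Schauder basis onto a Schauder basis. First I would observe that the hypotheses make every relevant space Banach: an $A_p$ weight satisfies $\rho^{-\frac{1}{p-1}}=\rho^{-\frac{q}{p}}\in L_1$ on a curve of finite length, which is exactly the integrability demanded by Lemma \ref{L3.1}, Lemma \ref{L3.2} and Proposition \ref{P2.1}. Hence $E_{p,\rho}\left(D^{+}\right)$, ${}_{-1}E_{p,\rho}\left(D^{-}\right)$ and $H_{p,\rho_{+}}^{+}$ are Banach spaces, and by Theorem \ref{T2.1}(i) the system $\left\{w^{n}\right\}_{n\in\mathbb Z_{+}}$ is a basis of $H_{p,\rho_{+}}^{+}$. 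By Theorem \ref{T5.1} the operator $T_{p}^{+}$ is a bounded bijection of $H_{p,\rho_{+}}^{+}$ onto $E_{p,\rho}\left(D^{+}\right)$, so the open mapping theorem upgrades it to a linear homeomorphism; consequently $\left\{T_{p}^{+}w^{n}\right\}_{n\in\mathbb Z_{+}}$ is a basis of $E_{p,\rho}\left(D^{+}\right)$, and everything reduces to identifying these images.

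The key computation is that $T_{p}^{+}w^{n}=F_{p,n}^{+}$. Substituting $\xi=\varphi\left(\zeta\right)$ with $\zeta\in\Gamma$, so that $\varphi_{-1}\left(\xi\right)=\zeta$, $d\xi=\varphi'\left(\zeta\right)d\zeta$ and $\varphi'_{-1}\left(\xi\right)=1/\varphi'\left(\zeta\right)$, the exponent collapses via $1-\frac{1}{q}=\frac{1}{p}$ to give
\[
\left(T_{p}^{+}w^{n}\right)\left(z\right)=\frac{1}{2\pi i}\int_{\Gamma}\frac{\left[\varphi\left(\zeta\right)\right]^{n}\sqrt[p]{\varphi'\left(\zeta\right)}}{\zeta-z}\,d\zeta ,\qquad z\in D^{+}.
\]
Inserting the defining decomposition $\left[\varphi\left(\zeta\right)\right]^{n}\sqrt[p]{\varphi'\left(\zeta\right)}=F_{p,n}^{+}\left(\zeta\right)+E_{p,n}^{+}\left(\zeta\right)$ and applying the Cauchy integral formula, the polynomial term reproduces $F_{p,n}^{+}\left(z\right)$, while $E_{p,n}^{+}$, being the boundary value of a function analytic in $D^{-}$ with $E_{p,n}^{+}\left(\infty\right)=0$, contributes $0$ for $z\in D^{+}$. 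This establishes $T_{p}^{+}w^{n}=F_{p,n}^{+}$ and hence that $\left\{F_{p,n}^{+}\right\}_{n\in\mathbb Z_{+}}$ is a basis of $E_{p,\rho}\left(D^{+}\right)$.

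The statement for ${}_{-1}E_{p,\rho}\left(D^{-}\right)$ is handled in complete parallel. Here I would invoke the $D^{-}$-analogue of Theorem \ref{T5.1} for the operator $T_{p}^{-}$ built from $\psi$ and the weight $\rho_{-}$, yielding an isomorphism of the weighted Hardy class of functions vanishing at infinity onto ${}_{-1}E_{p,\rho}\left(D^{-}\right)$, together with Theorem \ref{T2.1}(ii), which supplies the monomial basis $\left\{w^{-n}\right\}_{n\in\mathbb N}$. The same substitution $\xi=\psi\left(\zeta\right)$, with the $\frac{2}{p}$-twist already present in the definition of $f_{-}$ so that the numerator becomes $\left[\psi\left(\zeta\right)\right]^{n-\frac{2}{p}}\sqrt[p]{\psi'\left(\zeta\right)}$, combined with $\left[\psi\left(\zeta\right)\right]^{n-\frac{2}{p}}\sqrt[p]{\psi'\left(\zeta\right)}=F_{p,n}^{-}\left(\zeta^{-1}\right)+E_{p,n}^{-}\left(\zeta\right)$ and the Cauchy argument (now with the roles of $D^{+}$, $D^{-}$ and the pole at $z=0$ interchanged), gives $T_{p}^{-}w^{-n}=F_{p,n}^{-}$. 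Thus $\left\{F_{p,n}^{-}\right\}_{n\in\mathbb N}$ is a basis of ${}_{-1}E_{p,\rho}\left(D^{-}\right)$.

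Granting Theorem \ref{T5.1} and its $D^{-}$-counterpart, the conceptual skeleton is short and the delicate points are analytic rather than structural. The change of variables $\xi=\varphi\left(\zeta\right)$ must be justified on the possibly non-smooth Carleson curve $\Gamma$, and the fractional powers $\left[\varphi'\left(\zeta\right)\right]^{1/p}$ require the consistent analytic branch normalized by $\sqrt[p]{\varphi'\left(\infty\right)}>0$; one must verify that the resulting boundary values lie in the Smirnov class $E_{1}$ so that the Cauchy integral formula and the vanishing of the exterior contribution are legitimate. I expect the main obstacle to be precisely this Smirnov-class validity of the contour manipulations on a regular curve, together with confirming the $D^{-}$-analogue of Theorem \ref{T5.1} and the correct bookkeeping of the $\frac{2}{p}$-shift and the index normalization $\left({}_{-1}\right)$, where the unbounded geometry and the pole at $z=0$ replace the interior picture used for $F_{p,n}^{+}$.
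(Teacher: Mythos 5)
For the interior half your argument coincides with the paper's: the paper likewise combines Theorem \ref{T2.1}(i) (the monomials $\left\{w^{n}\right\}_{n\in\mathbb Z_{+}}$ form a basis of $H_{p,\rho_{+}}^{+}$ since $\rho_{+}\in A_{p}\left(T\right)$) with Theorem \ref{T5.1} and the identification $T_{p}^{+}w^{n}=F_{p,n}^{+}$, which the paper simply declares ``easy to see''; your change-of-variables/Cauchy computation with the decomposition $\left[\varphi\right]^{n}\sqrt[p]{\varphi'}=F_{p,n}^{+}+E_{p,n}^{+}$ and the vanishing of the exterior term is exactly the intended justification, so this half is fine and even slightly more detailed than the source.

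The genuine gap is in the exterior half: you ``invoke the $D^{-}$-analogue of Theorem \ref{T5.1}'', but no such analogue exists in the paper or among its quoted results --- establishing it \emph{is} the proof of Theorem \ref{T5.2}, and it occupies essentially the entire argument there. Concretely, the paper must: (a) define $T_{p}^{-}$ only on the set $\Pi^{-}$ of polynomials in $z^{-1}$ by $T_{p}^{-}P=\sum a_{k}F_{p,k}^{-}$ and prove $\Pi^{-}$ is dense in ${}_{-1}H_{p,\rho_{-}}^{-}$ via Theorem \ref{T2.1}(ii); (b) prove boundedness of $T_{p}^{-}$ on $\Pi^{-}$ through the integral representation \eqref{GrindEQ__5_3_} and the Sokhotskii--Plemelj formula, which reduces the estimate to the weighted boundedness of $S_{\Gamma}$ on $L_{p,\rho}\left(\Gamma\right)$ (Theorem \ref{D}, requiring $\rho\in A_{p}\left(\Gamma\right)$ and $\Gamma$ regular), then extend by continuity; (c) show $\mathrm{Ker}\,T_{p}^{-}=\left\{0\right\}$; and (d) --- the hardest step, absent from your sketch --- prove surjectivity: given $F\in{}_{-1}E_{p,\rho}\left(D^{-}\right)$ one sets $g\left(w\right)=F\left[\psi_{-1}\left(w\right)\right]w^{\frac{2}{p}}\left[\psi'_{-1}\left(w\right)\right]^{\frac{1}{p}}$, and the membership $g\in H_{1}^{+}$ does not follow formally. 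The paper first obtains $g\in H_{\delta}^{+}$ for small $\delta$ satisfying \eqref{GrindEQ__5_7_} by a H\"older splitting with an auxiliary exponent $p_{1}>2$, using $F\in{}_{-1}E_{\delta p_{1}}\left(D^{-}\right)$ and $\psi'_{-1}\in H_{1}^{+}$ (Goluzin), then shows $g^{+}\in L_{1}\left(\partial\omega\right)$ --- this is precisely where $\rho_{-}\in A_{p}\left(T\right)$ enters, through $\rho_{-}^{-\frac{q}{p}}\in L_{1}$ --- and only then Smirnov's theorem upgrades $g$ to $H_{1}^{+}$, hence $g\in H_{p,\rho_{-}}^{+}$ with $g\left(0\right)=0$; finally $f\left(z\right)=-g\left(1/z\right)\in{}_{-1}H_{p,\rho_{-}}^{-}$ is checked to satisfy $T_{p}^{-}f=F$, and Banach's theorem yields the isomorphism. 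You honestly flag this as ``the main obstacle'' and write ``granting Theorem \ref{T5.1} and its $D^{-}$-counterpart'', but flagging is not proving: as it stands, the basisness of $\left\{F_{p,n}^{-}\right\}_{n\in\mathbb N}$ in ${}_{-1}E_{p,\rho}\left(D^{-}\right)$ rests on an unestablished theorem-sized claim.
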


\begin{proof} It is easy to see that
\[\left(T_{p}^{+} f_{n} \right)\left(z\right)=F_{p,n}^{+} \left(z\right)\, ,\, \forall n\in \mathbb Z_{+} ,\, \] 
where $f_{n} \left(z\right)=z^{n}.$ We next turn our attention to Theorem \ref{T2.1}. Under the conditions of this theorem  the weight function $\rho _{+} \left(\cdot \right)$ satisfies $A_{p} \left(T\right)$, $1<p<+\infty,$ Muckenhoupt condition on the unit circle $T$.  Then by Theorem \ref{T2.1}, the system $\left\{z^{n} \right\}_{n\in \mathbb Z_{+} } $ forms a basis in $H_{p,\rho _{+} }^{+} $. Theorem \ref{D}  implies that if $\rho \in A_{p} \left(\Gamma \right)$, then the system $\left\{F_{p,n}^{+} \right\}_{n\in \mathbb Z_{+} } $ forms a basis in $E_{p,\rho } \left(D\right)$.

Denote by $L_{p,\rho }^{+} \left(\Gamma\right)$ the set of restrictions of the functions of $E_{p,\rho } \left(D\right)$ to $\Gamma $. We get that the system $\left\{F_{p,n}^{+} \left(\xi \right)\right\}_{n\in \mathbb Z_{+} } $ forms a basis in $L_{p,\rho }^{+} \left(\Gamma\right)$, if $ $ is a regular curve and $\rho _{+} \in A_{p} \left(T\right)$ \& $\rho \in A_{p} \left(\Gamma \right)$.

Show that analogous reasoning works also for the system $\left\{F_{p,n}^{-} \right\}_{n\in \mathbb N} $. Indeed, denote by $L_{p}^{-} \left(\Gamma\right)$ the restriction of $E_{p} \left(D^{-} \right)$ to $\Gamma $. Prove that $\left\{F_{p,n}^{-} \right\}_{n\in \mathbb  N} $ forms a basis in $L_{p,\rho }^{-} \left(\Gamma\right)$ under the conditions of theorem on functions $\rho \left(\cdot \right)$ and $\rho _{-} \left(\cdot \right)$. Let us take any polynomial $P\left(z^{-1} \right)=a_{1} \, z^{-1} +...+a_{r} \, z^{-r} $ and define the operator $T_{p}^{-} $ as follows
\begin{equation} \label{GrindEQ__5_1_} 
T_{p}^{-} P\left(z^{-1} \right)=a_{1} F_{p,\, 1}^{-} \left(z^{-1} \right)+...+a_{r} F_{p,\, r}^{-} \left(z^{-1} \right).           
\end{equation} 
Since the function $P\left(z^{-1} \right)$  is bounded in $C\backslash \omega $, from the definition of ${}_{-1} H_{p,\rho _{-} }^{-} $ it immediately follows that $P\left(z^{-1} \right)\in _{-1} H_{p,\rho _{-} }^{-} $. Since $\rho _{-} \left(\cdot \right)\in A_{p} \left(T\right)$, from the Theorem \ref{T2.1} ii) it follows that  the system  $\left\{z^{-n} \right\}_{n\in \mathbb N} $  forms a basis in ${}_{-1} H_{p,\rho _{-} }^{-} $, which proves that the set of polynomials $P\left(z^{-1} \right)$  are dense in ${}_{-1} H_{p,\rho _{-} }^{-} $. Denote the set of polynomials of the form $P\left(z^{-1} \right)$ by ${\Pi }^{-} $. Thus, ${\Pi }^{-} \subset _{-1} H_{p,\rho _{-} }^{-} $ and  $\overline{{\Pi }^{-} }=_{-1} H_{p,\rho _{-} }^{-} $ ($\bar{M}-$ is the closure of the set $ M$ in ${}_{-1} H_{p,\rho _{-} }^{-} $). Show that $T_{p}^{-} P\left(z^{-1} \right)\in _{-1} E_{p,\rho }^{} \left(D^{-} \right)$. Due to \eqref{GrindEQ__5_1_} it is enough to show that $F_{p,n}^{-} \in _{-1} E_{p,\rho }^{} \left(D^{-} \right),\, \, \forall n\in \mathbb N$. It is evident that there exists a number $M_{n} >0$ for which
\[\left|F_{p,n}^{-} \left(z\right)\right|\le M_{n} \, \, ,\, \, \, \forall z\in D^{-} .\] 
Since,  $F_{p,n}^{-} \left(z\right)$  is a polynomial from $z^{-1} $ of order $n$.  We have
\[\left\| F_{p,n}^{-} \right\| _{{}_{-1} E_{p,\rho } \left(D^{-} \right)} =\left\| F_{p,n}^{-} \left(\xi \right)\right\| _{L_{p,\rho } \left(\Gamma \right)} =\left(\int _{\Gamma }\left|F_{p,n}^{-} \left(\xi \right)\right|^{p} \rho \left(\xi \right)d\xi  \right)^{\frac{1}{p} } \le \] 
\[\le M_{n} \left(\int _{\Gamma }\rho \left(\xi \right)d\xi  \right)^{\frac{1}{p} } <+\infty ,\] 
since, $\rho \left(\cdot \right)\in A_{p} \left(\Gamma \right)$. The inclusion $F_{p,n}^{-} \in _{-1} E_{p} \left(D^{-} \right)$ is clear. Then, from the above expression it follows that $F_{p,n}^{-} \in _{-1} E_{p,\rho }^{} \left(D^{-} \right)$.  Hence, $T_{p}^{-} :\Pi ^{-} \to _{-1} E_{p,\rho }^{} \left(D^{-} \right)$.  Similarly  to the case $T_{p}^{+} $, it is proved that the operator $T_{p}^{-} $ is bounded in ${\Pi }^{-} $.  Indeed, consider the integral expression for $F_{p,n}^{-} $:
\begin{equation} \label{GrindEQ__5_2_} 
F_{p,\, n}^{-} \left(z^{-1} \right)=\frac{1}{2\pi i} \int _{\left|w\right|=R}\frac{w^{n-\frac{p}{2} } \left[\psi '_{-1} \left(w\right)\right]^{1-\frac{1}{p} } }{\psi _{-1} \left(w\right)-z} \,dw\, .\,  
\end{equation} 
Now,  let us  take an arbitrary polynomial of $r$th order $P_{r} $:
\[P_{r} \left(z^{-1} \right)=a_{1} z^{-1} +...+a_{r} z^{-r} ,\] 
and oppose to it the function 
\[\left[T_{p} P_{r} \right]\left(z\right)=\frac{1}{2\pi i} \int _{\left|w\right|=R}\frac{P_{r} \left(w\right)\left[\psi '_{-1} \left(w\right)\right]^{1-\frac{1}{p} } }{w^{\frac{p}{2} } \left(\psi _{-1} \left(w\right)-z\right)}  \, \, dw\, .\] 
We have 
\[\left[T_{p} P_{r} \right]\left(z\right)=\sum _{k=1}^{r}a_{k}  \int _{\left|w\right|=R}\frac{w^{k-\frac{p}{2} } \left[\psi '_{-1} \left(w\right)\right]^{1-\frac{1}{p} } }{\psi _{-1} \left(w\right)-z}  \, \, dw\, =\sum _{k=1}^{r}a_{k} F_{p,k}^{-} \left(z^{-1} \right) ,\] 
i.e.  the representation \eqref{GrindEQ__5_2_} coincides with the expression on $\Pi ^{-} $ . It is clear that $T_{p}^{-} $ is a linear operator acting from $\Pi ^{-} $ to ${}_{-1} E_{p} \left(D^{-} \right)$, i.e. $T_{p}^{-} :\, \Pi ^{-} \to _{-1} E_{p} \left(D^{-} \right)$. Let us show that  $T_{p}^{-} $  is bounded on ${\Pi }^{-} $. It is not difficult to see that  an integral representation  is also true  for $T_{p}^{-} $  and in the case $R=1$, i.e.
\begin{equation} \label{GrindEQ__5_3_} 
\left[T_{p}^{-} P_{r} \right]\left(z\right)=\frac{1}{2\pi i} \int _{\left|w\right|=1}\frac{P_{r} \left(w\right)\left[\psi '_{-1} \left(w\right)\right]^{1-\frac{1}{p} } }{w^{\frac{2}{p} } \left(\psi _{-1} \left(w\right)-z\right)}  \, \, dw\, .       
\end{equation} 
This follows from the Cauchy integral formula for functions from the  Smirnov class.

Making a substitution $\xi =\psi _{-1} \left(w\right)$  in the integral expression, we get
\[\left[T_{p}^{-} P_{r} \right]\left(z\right)=\frac{1}{2\pi i} \int _{}\frac{P_{r} \left[\psi \left(\xi \right)\right]\, \psi ^{-\frac{2}{p} } \left(\xi \right)\left[\psi '\left(\xi \right)\right]^{\frac{1}{p} } }{\xi -z} \,  d\xi \, ,\] 
where $z\in ext\, \Gamma =D^{-} $. Letting $z$  to $\tau \in \Gamma$ and applying the formula Sokhotskii-Plemelj, we have  
\begin{equation} \label{GrindEQ__5_4_} 
\left[T_{p}^{-} P_{r} \right]\left(\tau \right)=-\frac{1}{2} \tilde{P}_{r} \left(\tau \right)+\left[S_{\Gamma } \tilde{P}_{r} \right]\left(\tau \right),       
\end{equation} 
where
\[\tilde{P}_{r} \left(\tau \right)=P_{r} \left[\psi \left(\tau \right)\right]\, \psi ^{-\frac{2}{p} } \left(\tau \right)\left[\psi '\left(\tau \right)\right]^{\frac{1}{p} } .\] 
Taking the norm $\left\| \, \cdot \, \right\| _{L_{p,\rho } \left(\Gamma \right)} $, hence we obtain
\[\left\| T_{p}^{-} P_{r} \right\| _{L_{p,\rho } \left(\Gamma\right)} \le \frac{1}{2} \left\| \tilde{P}_{r} \left(\tau \right)\right\| _{L_{p,\rho } \left(\Gamma \right)} +\left\| S\tilde{P}_{r} \right\| _{L_{p,\rho } \left(\Gamma\right)} .\] 
We have 
\[\left\| \tilde{P}_{r} \right\| _{L_{p,\rho } \left(\Gamma\right)}^{p} =\int _{\Gamma}\left|\tilde{P}_{r} \left(\tau \right)\right|^{p} \rho \left(\tau \right)\left|d\tau \right|= \int _{\Gamma}\left|P_{r} \left[\left(\psi \left(\tau \right)\right)^{-1} \right]\right|^{p} \, \left|\psi ^{-2} \left(\tau \right)\right|\rho \left(\tau \right)\, \left|d\psi \left(\tau \right)\right|= \] 
\[=\int _{\left|w\right|=1}\left|P_{r} \left(w^{-1} \right)\right|^{p} \left|w^{-2} \right|\rho \left(\psi _{-1} \left(w\right)\right)\left|dw\right|=\int _{\left|w\right|=1}\left|P_{r} \left(w^{-1} \right)\right|^{p} \rho _{-} \left(w\right)\left|dw\right|=  \] 
\[=\left\| P_{r} \left(w^{-1} \right)\right\| _{L_{p,\rho _{-} } \left(\partial \omega \right)}^{} <+\infty .\] 
Since $\rho \left(\cdot \right)\in A_{p} \left(\Gamma \right)$, then from the boundedness of the singular operator $S$ in $L_{p,\rho } \left(\Gamma\right)$, from the relation \eqref{GrindEQ__5_4_}, we obtain
\[\left\| T_{p}^{-} P_{r} \right\| _{L_{p,\rho } \left(\Gamma \right)} \le c\left\| \tilde{P}_{r} \right\| _{L_{p,\rho } \left(\Gamma \right)} \le c\left\| P_{r} \left(w^{-1} \right)\right\| _{L_{p,\rho _{-} } \left(\partial \omega \right)} .\] 
Consequently, the operator $T_{p}^{-} $ is bounded on ${\Pi }^{-} $.  Extend $T_{p}^{-} $ by continuity $T_{p}^{-} $  to ${}_{-1} H_{p,\rho _{-} }^{-} $ and denote it again by  $T_{p}^{-} $. Thus, we obtain that $T_{p}^{-} $  boundedly acts from ${}_{-1} H_{p,\rho _{-} }^{-} $ to  ${}_{-1} E_{p,\rho } \left(D^{-} \right)$. Let us show that  $T_{p}^{-} $ establishes an isomorphism between ${}_{-1} H_{p,\rho _{-} }^{-} $   and ${}_{-1} E_{p,\rho } \left(D^{-} \right)$. It follows directly from \eqref{GrindEQ__5_3_}:
\[\left[T_{p}^{-} f\right]\left(z\right)=\frac{1}{2\pi i} \int _{\left|w\right|=1}\frac{f^{-} \left(w^{-1} \right)\left[\psi '_{-1} \left(w\right)\right]^{1-\frac{1}{p} } }{w^{\frac{2}{p} } \left(\psi _{-1} \left(w\right)-z\right)}  \, \, dw\, =\] 
\begin{equation} \label{GrindEQ__5_5_} 
=\frac{1}{2\pi i} \int _{\Gamma}\frac{f^{-} \left[\left(\psi \left(\xi \right)\right)^{-1} \right]\, \psi ^{-\frac{2}{p} } \left(\xi \right)\left[\psi '\left(\xi \right)\right]^{\frac{1}{p} } }{\xi -z} \, \,  d\xi \, .         
\end{equation} 
Hence we obtain that if $\left[T_{p}^{-} f\right]\left(z\right)\equiv 0$,  $\forall z\in ext \Gamma\, $, then   $f\equiv 0$, i.e. $KerT_{p}^{-} =\left\{0\right\}$. 

Take $\forall F\in $ ${}_{-1} E_{p,\rho } \left(D^{-} \right)$. Since $F\left(\infty \right)=0$, then from the Cauchy formula we obtain 
\[F\left(z\right)=-\frac{1}{2\pi i} \int _{\Gamma}\frac{F^{-} \left(\xi \right)d\xi }{\xi -z} \,  \, \, ,\, \, z\in D^{-} ,\] 
where $F^{-} \left(\cdot \right)-$is  nontangential boundary values of the function $F\left(\cdot \right)$ on $\Gamma $. Assume
\begin{equation} \label{GrindEQ__5_6_} 
g\left(w\right)=F\left[\psi _{-1} \left(w\right)\right]w^{\frac{2}{p} } \left[\psi '_{-1} \left(w\right)\right]^{\frac{1}{p} } \, ,\, \left|w\right|<1.        
\end{equation} 

$\psi _{-1} \left(w\right)$ in a neighborhood of a point $w=0$ has a representation of the form 
\[\psi _{-1} \left(w\right)=\beta \, w^{-1} +\beta _{0} +\beta _{1} w+...\, ,\] 
where $\beta $ $\ne 0$. Consequently,  $w^{2} \psi '_{-1} \left(w\right)$ is an analytic function for  $\left|w\right|<1$ and  ${\mathop{\lim }\limits_{w\to 0}} $ $w^{2} \psi '_{-1} \left(w\right)$ $=\beta $ $\ne 0$. Thus, the function $g\left(w\right)$ is analytic for  $\left|w\right|<1$. 

 Let us show that $g\in H_{p,\rho _{-} }^{+} $. It follows immediately from \eqref{GrindEQ__5_6_} that the nontangential boundary values $g^{+} \left(\xi \right)$ on $\partial \omega $  are connected by non-tangent boundary values $F^{-} \left[\psi _{-1} \left(\xi \right)\right]$  by the relation 
\[g^{+} \left(\xi \right)=F^{-} \left[\psi _{-1} \left(\xi \right)\right]\left[\psi '_{-1} \left(\xi \right)\right]^{\frac{1}{p} } \xi ^{\frac{2}{p} } \, ,\, \, \left|\xi \right|=1.\] 
We have
\[\int _{\left|\xi \right|=1}\left|g^{+} \left(\xi \right)\right|^{p} \rho _{-} \left(\xi \right)\left|d\xi \right| =\int _{\left|\xi \right|=1}\left|F^{-} \left[\psi _{-1} \left(\xi \right)\right]\, \right|^{p} \left|\psi '_{-1} \left(\xi \right)\, \right|\rho _{-} \left(\xi \right)\, \left|d\xi \right|= \, \] 
\[=/\xi =\psi \left(\tau \right)\, ,\, \tau \in \Gamma \, /=\int _{\Gamma}\left|F^{-} \left(\tau \right)\right|^{p} \left|\psi '_{-1} \left(\psi \left(\tau \right)\right)\right|\rho _{-} \left(\psi \left(\tau \right)\right)\left|\psi '\left(\tau \right)\right|\left|d\tau \right| .\] 
Given here the obvious expressions
\[\frac{d\psi }{d\tau } =\frac{1}{{\frac{d\psi _{-1} \left(\xi \right)}{d\xi }  \mathord{\left/{\vphantom{\frac{d\psi _{-1} \left(\xi \right)}{d\xi }  \begin{array}{l} {} \\ {{}_{\xi =\psi \left(\tau \right)} } \end{array}}}\right.\kern-\nulldelimiterspace} \begin{array}{l} {} \\ {{}_{\xi =\psi \left(\tau \right)} } \end{array}} } ,\] 
and 
\[\rho _{-} \left(\xi \right)=\rho \left[\psi _{-1} \left(\xi \right)\right]\, ,\xi \in T\Leftrightarrow \rho _{-} \left(\psi \left(\tau \right)\right)=\rho \left(\tau \right),\, \, \tau \in \Gamma ,\] 
we have  
\[\int _{\left|\xi \right|=1}\left|g^{+} \left(\xi \right)\right|^{p} \, \rho _{-} \left(\xi \right)\left|d\xi \right| =\int _{\Gamma}\left|F^{-} \left(\tau \right)\right|^{p} \rho \left(\tau \right)\left|d\tau \right| <+\infty ,\] 
since,  $F\in $ ${}_{-1} E_{p,\rho } \left(D^{-} \right)$.

 Consequently, $g^{+} \left(\cdot \right)\in $ $L_{p,\rho _{-} } \left(\partial \omega \right)$. Let us show that $g\left(\cdot \right)\in H_{1}^{+} $. First, we show that for sufficiently small  $\delta >0$ the inclusion $g\left(\cdot \right)\in H_{\delta }^{+} $ is true. Let $p_{1} >2$  be some number and  $q_{1} :\frac{1}{p_{1} } +\frac{1}{q_{1} } =1-$ is a conjugate number  to a number  $p_{1} $. Let   $\omega _{r} =\left\{z\in \mathbb C:\left|z\right|=r\right\}\, ,\, \, \, 0<r<1$,  and $\Gamma _{r} =\psi _{-1} \left(\omega _{r} \right)-\psi _{-1} \left(\cdot \right)$ is an image of $\omega_{r}$. We have
\[\int _{\omega _{r} }\left|g\left(w\right)\right|^{\delta }  \left|dw\right|=r^{2\frac{\delta }{p} } \int _{\omega _{r} }\left|F\left(\psi _{-1} \left(w\right)\right)\right|^{\delta } \left|\psi '_{-1} \left(w\right)\right|^{\frac{\delta }{p} }  \left|dw\right|=\] 
\[=r^{2\frac{\delta }{p} } \int _{\omega _{r} }\left|F\left(\psi _{-1} \left(w\right)\right)\right|^{\delta } \left|\psi '_{-1} \left(w\right)\right|^{-\frac{1}{p_{1} } }  \left|\psi '_{-1} \left(w\right)\right|^{\frac{\delta }{p} +\frac{1}{p_{1} } } \left|dw\right|\le \] 
\[\le \left(\int _{\omega _{r} }\left|F\left(\psi _{-1} \left(w\right)\right)\right|^{\delta p_{1} } \left|\psi '_{-1} \left(w\right)\right|^{-1}  \left|dw\right|\right)^{\frac{1}{p_{1} } } \left(\int _{\omega _{r} }\left|\psi '_{-1} \left(w\right)\right|^{\left(\frac{\delta }{p} +\frac{1}{p_{1} } \right)q_{1} }  \left|dw\right|\right)^{\frac{1}{q_{1} } } =\]\[=I_{1} \left(r\right)I_{2} \left(r\right).\] 
Let  $\delta >0$ be satisfy the condition 
\begin{equation} \label{GrindEQ__5_7_} 
0<\delta <\min \left\{\frac{1}{p_{1} } ;p\left(1-\frac{2}{p_{1} } \right)\right\}.       
\end{equation} 
Thus
\[I_{1}^{p_{1} } \left(r\right)=\int _{\omega _{r} }\left|F\left(\psi _{-1} \left(w\right)\right)\right|^{\delta p_{1} } \left|\psi '_{-1} \left(w\right)\right|^{-1} \left|dw\right| =\] 
\[=/w=\psi \left(z\right)/=\int _{\Gamma _{r} }\left|F\left(z\right)\right|^{\delta p_{1} } \left|\psi '_{-1} \left(\psi \left(z\right)\right)\right|^{-1} \left|\psi '\left(z\right)\right|\left|dz\right| .\] 
Taking into account the obvious relation
\[\psi '_{-1} \left(\psi \left(z\right)\right)=\frac{1}{\psi '\left(z\right)} ,\] 
we have
\begin{equation} \label{GrindEQ__5_8_} 
I_{1}^{p_{1} } \left(r\right)=\int _{\Gamma _{r} }\left|F\left(z\right)\right|^{\delta p_{1} }  \left|dz\right|.          
\end{equation} 
Since, $F\in _{-1} E_{1} \left(D^{-} \right)$, it is clear that  $F\in _{-1} E_{\delta _{1} } \left(D^{-} \right)$,  for  $\forall \delta _{1} >0:\delta _{1} <1$. Then from \eqref{GrindEQ__5_8_} we obtain
\[{\mathop{\sup }\limits_{0<r<1}} I_{1}^{p_{1} } \left(r\right)<+\infty ,\] 
so, $\delta _{1} =\delta p_{1} <1$.

 As is known,  $\psi '_{-1} \left(\cdot \right)\in H_{1}^{+} $ (see e.g.  Goluzin  \cite[ pp. 405]{45}). Consequently,  $\psi '_{-1} \left(\cdot \right)\in H_{\delta _{2} }^{+} $, $\forall \delta _{2} \in \left(0,1\right)$. It follows immediately from the condition \eqref{GrindEQ__5_7_} that  $\delta _{2} =q_{1} \left(\frac{\delta }{p} +\frac{1}{p_{1} } \right)<1$. Then we have 
\[{\mathop{\sup }\limits_{0<r<1}} I_{2}^{q_{1} } \left(r\right)={\mathop{\sup }\limits_{0<r<1}} \int _{\omega _{r} }\left|\psi '_{-1} \left(w\right)\right|^{\delta _{2} }  \left|dw\right|<+\infty .\] 
As a result, we find that if $\delta $  satisfies the condition \eqref{GrindEQ__5_7_}, then
\[{\mathop{\sup }\limits_{0<r<1}} \int _{\omega _{r} }\left|g\left(w\right)\right|^{\delta }  \left|dw\right|<+\infty ,\] 
i.e.   $g\in H_{\delta }^{+} $.

Now show that the boundary values  $g^{+} \left(\cdot \right)$ of the function $g\left(\cdot \right)$ belong to the class $L_{1} \left(\partial \omega \right)$, i.e.  $g^{+} \left(\cdot \right)\in $ $L_{1} \left(\partial \omega \right)$. From \eqref{GrindEQ__5_6_} we immediately obtain that the nontangential boundary values $g^{+} \left(\cdot \right)$ of a function $g\left(\cdot \right)$ on the inside $\omega $  are expressed by nontangential boundary values $F^{-} \left(\cdot \right)$ on  $\partial \omega $ the outside $\omega $, by the relation
\[g^{+} \left(\xi \right)=F^{-} \left(\psi _{-1} \left(\xi \right)\right)\left(\psi '_{-1} \left(\xi \right)\right)^{\frac{1}{p} } \xi ^{\frac{2}{p} } ,\, \, \, \xi \in \partial \omega .\] 
We have
\[\int _{\left|\xi \right|=1}\left|g^{+} \left(\xi \right)\right| \left|d\xi \right|=\int _{\left|\xi \right|=1}\left|F^{-} \left(\psi _{-1} \left(\xi \right)\right)\right|\, \left|\psi '_{-1} \left(\xi \right)\right|^{\frac{1}{p} } \,  \left|d\xi \right|=\] 
\[=\int _{\left|\xi \right|=1}\left|F^{-} \left(\psi _{-1} \left(\xi \right)\right)\right|\, \rho _{-}^{\frac{1}{p} } \left(\xi \right)\left|\psi '_{-1} \left(\xi \right)\right|^{\frac{1}{p} } \, \rho _{-}^{-\frac{1}{p} } \left(\xi \right) \left|d\xi \right|\le \] 
\[\le \left(\int _{\left|\xi \right|=1}\left|F^{-} \left(\psi _{-1} \left(\xi \right)\right)\right|^{p} \, \rho _{-}^{} \left(\xi \right)\left|\psi '_{-1} \left(\xi \right)\right|\,  \left|d\xi \right|\right)^{\frac{1}{p} } \, \, \left(\int _{\left|\xi \right|=1}\, \rho _{-}^{-\frac{q}{p} } \left(\xi \right)\,  \left|d\xi \right|\right)^{\frac{1}{q} } =I_{1} I_{2},\] 
where $I_{1}$  and $I_{2}$  denotes the corresponding integrals. Paying attention to the expression
\[{\frac{d\psi _{-1} \left(\xi \right)}{d\xi }  \mathord{\left/{\vphantom{\frac{d\psi _{-1} \left(\xi \right)}{d\xi }  \begin{array}{l} {} \\ {{}_{\xi =\psi \left(\tau \right)} } \end{array}}}\right.\kern-\nulldelimiterspace} \begin{array}{l} {} \\ {{}_{\xi =\psi \left(\tau \right)} } \end{array}} =\left(\frac{d\psi }{d\tau } \right)^{-1} ,\] 
we obtain 
\[I_{1}^{p} =\int _{\left|\xi \right|=1}\left|F^{-} \left(\psi _{-1} \left(\xi \right)\right)\right|^{p} \rho _{-} \left(\xi \right)\, \left|\psi '_{-1} \left(\xi \right)\right|\,  \left|d\xi \right|=/\xi =\psi \left(\tau \right)/=\] 
\[=\int _{\Gamma }\left|F^{-} \left(\tau \right)\right|^{p} \rho _{-} \left(\psi \left(\tau \right)\right)\, \,  \left|d\tau \right|=\int _{\Gamma }\left|F^{-} \left(\tau \right)\right|^{p} \rho \left(\tau \right)\, \,  \left|d\tau \right|<+\infty ,\] 
since, $F^{-} \left(\cdot \right)\in L_{p,\rho } \left(\Gamma \right)$.
\[I_{2}^{q} =\int _{\left|\xi \right|=1}\rho ^{-\frac{q}{p} } \, \left(\psi _{-1} \left(\xi \right)\right)\,  \left|d\xi \right|<+\infty ,\] 
since, by assumption  we have $\rho _{-} \left(\cdot \right)\in A_{p} \left(\partial \omega \right)$. Combining these results, we obtain that $g^{+} \left(\cdot \right)\in L_{1} \left(\partial \omega \right)$.  Then from the Smirnov  theorem   it follows  that  $g\left(\cdot \right)\in H_{1}^{+} $. Since, $g^{+} \left(\cdot \right)\in L_{p,\rho _{-} } \left(\partial \omega \right)$, then by definition we obtain that $g\left(\cdot \right)\in H_{p,\rho _{-} }^{+} $. On the other hand, from the representation \eqref{GrindEQ__5_6_} we have
\[g\left(0\right)=F\left(\infty \right)\left({\mathop{\lim }\limits_{w\to 0}} w^{2} \psi '_{-1} \left(w\right)\right)^{\frac{1}{p} } =0\, \cdot \, \beta ^{\frac{1}{p} } =0,    \] 
as $F\left(\infty \right)=0$.  Thus, the Taylor expansion of a function $g\left(\cdot \right)$ at zero has the form
\[g\left(w\right)=a_{1} w+a_{2} w^{2} +...  .\] 
Accept
\[f\left(z\right)=-g\left(\frac{1}{z} \right)\, ,\, \, \left|z\right|>1.\] 
It is not difficult to see that nontangential boundary values  $f^{-} \left(\cdot \right)$  of the function $f\left(\cdot \right)$  from outside  $\omega $ to   $\partial \omega $  are  equal $f^{-} \left(\xi \right)=-g^{+} \left(\bar{\xi }\right)\, ,\, \, \left|\xi \right|=1$. Consequently,  $f\left(\infty \right)=-g\left(0\right)=0$. Using these relations, it is not difficult to establish the inclusion  $f\in _{-1} H_{p,\rho _{-} }^{-} $. Paying   attention to the expression
\[f^{-} \left(w^{-1} \right)=-g^{+} \left(w\right)\, ,\, \, w\in \partial \omega ,\] 
from the representation \eqref{GrindEQ__5_5_} for the operator  $T_{p}^{-} $ we obtain
\[\left[T_{p}^{-} f\right]\left(z\right)=\frac{1}{2\pi i} \int _{\left|w\right|=1}\frac{f^{-} \left(w^{-1} \right)\left(\psi '_{-1} \left(z\right)\right)^{1-\frac{1}{p} } }{w^{\frac{2}{p} } \left(\psi _{-1} \left(w\right)-z\right)}  dw=\] 
\[=-\frac{1}{2\pi i} \int _{\left|w\right|=1}\frac{F^{-} \left(\psi _{-1} \left(w\right)\right)w^{\frac{2}{p} } \left(\psi '_{-1} \left(z\right)\right)^{\frac{1}{p} } \left(\psi '_{-1} \left(z\right)\right)^{1-\frac{1}{p} } }{w^{\frac{2}{p} } \left(\psi _{-1} \left(w\right)-z\right)}  dw=\] 

\[=/w=\psi \left(\xi \right)/=-\frac{1}{2\pi i} \int _{\Gamma }\frac{F^{-} \left(\xi \right)\psi '_{-1} \left(\psi \left(\xi \right)\right)\psi '\left(\xi \right)}{\xi -z}  d\xi =\] 
\[=-\frac{1}{2\pi i} \int _{\Gamma }\frac{F^{-} \left(\xi \right)}{\xi -z}  d\xi =F\left(z\right)\, ,\, \, \forall z\in D^{-} .\] 

 So,  $\left[T_{p}^{-} f\right]\left(z\right)=F\left(z\right)$. Thus,  we obtain that  the operator  $T_{p}^{-} $  boundedly  maps ${}_{-1} H_{p,\rho _{-} }^{-} $ onto ${}_{-1} E_{p,\rho } \left(D^{-} \right)$.  Therefore, by the Banach's theorem it follows that $\left(T_{p}^{-} \right)^{-1} $ is also bounded and as a result,   $T_{p}^{-} $ is an isomorphism between ${}_{-1} H_{p,\rho _{-} }^{-} $ and ${}_{-1} E_{p,\rho } \left(D^{-} \right)$. Furthermore, $\left[T_{p}^{-} \left(\xi ^{-n} \right)\right]\left(z\right)=F_{p,n}^{-} \left(z\right)\, ,\, \, n\in \mathbb N$. Since $\rho _{-} \left(\cdot \right)\in A_{p} \left(T\right)$, by Theorem \ref{T2.1}, the system $\left\{z^{-n} \right\}_{n\in \mathbb N} $ forms a basis in ${}_{-1} H_{p,\rho _{-} }^{-} $. As e result, the system $\left\{F_{p,n}^{-} \left(z\right)\right\}_{n\in \mathbb N} $  forms a basis in ${}_{-1} E_{p,\rho } \left(D^{-} \right)$. Thus,  $\forall F\in $ ${}_{-1} E_{p,\rho } \left(D^{-} \right)$ there exists a unique expansion 
\[F\left(z\right)=\sum _{n=1}^{\infty }F_{n} F_{p,n}^{-}  \left(z\right)\, ,\, \, \, z\in D^{-} .\] 

The theorem is proved. \end{proof}

\section{Basisness of the double system of $p$-Faber polynomials  with complex coefficients }

Let $D^{+}$ be a bounded, simply connected region, whose boundary $\Gamma $ belongs to class $LR$. Consider the double system of $p$-Faber polynomials
\begin{equation} \label{GrindEQ__6_1_} 
\left\{A\left(\xi \right)F_{p,n}^{+} \left(\xi \right)\, ;\, B\left(\xi \right)F_{p,k}^{-} \left(\xi \right)\, \right\}_{n\ge 0\, ,\, k\ge 1} ,       
\end{equation} 
with complex-valued coefficients $A\left(\xi \right)$ and $B\left(\xi \right)$.  Let us prove the following 

\begin{thm}\label{T6.1} Let $\Gamma $ be a curve of the class $LR$, $0\in int \Gamma$, the functions $A\left(\xi \right)$, $B\left(\xi \right)$ satisfy i)-iii) and $\rho :\Gamma \to R_{+} $ be a weight of the form \eqref{GrindEQ__4_3_}. Let the weights $\rho _{\pm } \left(\cdot \right)$ are defined as in \eqref{GrindEQ__5_0_} on $\partial \omega $. If the weights $\rho \left(\cdot \right)$ and $\rho _{\pm } \left(\cdot \right)$  satisfy $\alpha $) and \eqref{GrindEQ__4_5_}, then the double system of generalized $p$-Faber polynomials \eqref{GrindEQ__6_1_} forms a basis in $L_{p;\rho } \left(\Gamma \right)$, $1<p<+\infty $.
\end{thm}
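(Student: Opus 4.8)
The plan is to recast the expansion problem for the double system \eqref{GrindEQ__6_1_} as the unique solvability of the non-homogeneous Riemann problem \eqref{GrindEQ__4_6_}, and then to transport the one-sided bases of Theorem \ref{T5.2} through the resulting isomorphism. First I would observe that if $f\in L_{p,\rho}\left(\Gamma\right)$ has an expansion
\[ f=\sum_{n=0}^{\infty}a_{n}A F_{p,n}^{+}+\sum_{k=1}^{\infty}b_{k}B F_{p,k}^{-}, \]
then, writing $F^{+}=\sum_{n\ge0}a_{n}F_{p,n}^{+}$ and $F^{-}=\sum_{k\ge1}b_{k}F_{p,k}^{-}$, one obtains a pair $\left(F^{+},F^{-}\right)\in E_{p,\rho}\left(D^{+}\right)\times {}_{-1}E_{p,\rho}\left(D^{-}\right)$ with $A F^{+}+B F^{-}=f$ a.e. on $\Gamma$; the lower index $-1$ appears because the $F_{p,k}^{-}$ sum runs over $k\ge1$, so $F^{-}\left(\infty\right)=0$. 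Conversely, such a pair produces such an expansion. Hence the assertion is equivalent to the statement that the linear map
\[ T\left(F^{+},F^{-}\right)\equiv A F^{+}+B F^{-} \]
is an isomorphism of $E_{p,\rho}\left(D^{+}\right)\times {}_{-1}E_{p,\rho}\left(D^{-}\right)$ onto $L_{p,\rho}\left(\Gamma\right)$.

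Next I would check the three properties making $T$ an isomorphism. Boundedness follows at once from condition i): since $\left|A\right|,\left|B\right|\in L_{\infty}\left(\Gamma\right)$, one has $\left\| T\left(F^{+},F^{-}\right)\right\|_{L_{p,\rho}}\le \left\| A\right\|_{\infty}\left\| F^{+}\right\|_{E_{p,\rho}\left(D^{+}\right)}+\left\| B\right\|_{\infty}\left\| F^{-}\right\|_{{}_{-1}E_{p,\rho}\left(D^{-}\right)}$. For injectivity, if $T\left(F^{+},F^{-}\right)=0$ then $\left(F^{+},F^{-}\right)$ solves the homogeneous problem \eqref{GrindEQ__4_1_}; since $F^{-}\left(\infty\right)=0$ is built into the class ${}_{-1}E_{p,\rho}\left(D^{-}\right)$, Corollary \ref{C4.1} yields $F^{+}\equiv F^{-}\equiv0$. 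For surjectivity, given $f\in L_{p,\rho}\left(\Gamma\right)$ I would set $g\equiv f/A$, which lies in $L_{p,\rho}\left(\Gamma\right)$ because $\left|A\right|^{-1}\in L_{\infty}\left(\Gamma\right)$, and solve \eqref{GrindEQ__4_6_} with right-hand side $g$ and coefficient $D=-B/A$; by Theorem \ref{T4.2} and Corollary \ref{C4.4}, under the normalization $F\left(\infty\right)=0$ this problem has a unique solution given by \eqref{GrindEQ__4_7_}, and that solution lies in $E_{p,\rho}\left(D^{+}\right)\times {}_{-1}E_{p,\rho}\left(D^{-}\right)$. Multiplying $F^{+}-D F^{-}=g$ by $A$ recovers $A F^{+}+B F^{-}=f$. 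Thus $T$ is a bounded bijection, and the Banach inverse-mapping theorem makes $T^{-1}$ bounded, so $T$ is an isomorphism.

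To finish, I would invoke Theorem \ref{T5.2}, whose Muckenhoupt hypotheses are secured by the weight assumptions: $\left\{F_{p,n}^{+}\right\}_{n\ge0}$ is a basis of $E_{p,\rho}\left(D^{+}\right)$ and $\left\{F_{p,k}^{-}\right\}_{k\ge1}$ is a basis of ${}_{-1}E_{p,\rho}\left(D^{-}\right)$, so the family $\left\{\left(F_{p,n}^{+},0\right)\right\}_{n\ge0}\cup\left\{\left(0,F_{p,k}^{-}\right)\right\}_{k\ge1}$ is a basis of the product space. Since multiplication by the bounded functions $A$ and $B$ is continuous on $L_{p,\rho}\left(\Gamma\right)$, it commutes with the norm-convergent expansions, so applying the isomorphism $T$ termwise carries this product basis onto $\left\{A F_{p,n}^{+}\right\}_{n\ge0}\cup\left\{B F_{p,k}^{-}\right\}_{k\ge1}$. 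In the sense of Definition \ref{D2.3} this is a basis of $L_{p,\rho}\left(\Gamma\right)$: each $f$ receives the unique expansion obtained by decomposing the uniquely determined pair $T^{-1}f=\left(F^{+},F^{-}\right)$ in the two one-sided bases.

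The hard part will be the surjectivity step, namely guaranteeing that \eqref{GrindEQ__4_6_} is solvable \emph{precisely} in $E_{p,\rho}\left(D^{+}\right)\times {}_{-1}E_{p,\rho}\left(D^{-}\right)$, with no residual solvability conditions and no surplus of solutions. This is exactly where the index of $D=-B/A$ — incorporating the jumps $h_{k}$ of $\theta=\beta-\alpha$ and the weight exponents $\beta_{k}$ — must be neutralized by the normalization $F\left(\infty\right)=0$. It is the inequalities \eqref{GrindEQ__4_5_} together with condition $\alpha)$ that ensure the canonical solution $Z$ and its reciprocal sit in the correct weighted Smirnov classes, so that Theorem \ref{T4.2} and Corollary \ref{C4.4} indeed apply; verifying that the particular solution \eqref{GrindEQ__4_7_} vanishes at infinity, and hence belongs to ${}_{-1}E_{p,\rho}\left(D^{-}\right)$, is the remaining bookkeeping. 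One should also confirm that the boundary values used to define $T$ coincide with the non-tangential limits occurring in \eqref{GrindEQ__4_1_} and \eqref{GrindEQ__4_6_}, which is immediate from the definitions of the weighted Smirnov classes.
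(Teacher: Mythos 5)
Your proposal is correct, and at its core it follows the paper's own strategy: both arguments reduce basisness of \eqref{GrindEQ__6_1_} to unique solvability of the Riemann problem \eqref{GrindEQ__6_2_} in $E_{p,\rho}\left(D^{+}\right)\times{}_{-1}E_{p,\rho}\left(D^{-}\right)$ (Theorem \ref{T4.2} and Corollary \ref{C4.4}) combined with the one-sided bases of Theorem \ref{T5.2}, and your existence half is essentially the paper's computation with the partial sums $S_{m_{1},m_{2}}$ and condition i). Where you genuinely diverge is the uniqueness half. The paper proceeds by hand: from the solution formula \eqref{GrindEQ__6_3_} and the Sokhotski--Plemelj formula it derives $\left\| F^{+}\right\|_{L_{p,\rho}\left(\Gamma\right)}\le c\left\| f\right\|_{L_{p,\rho}\left(\Gamma\right)}$ (using boundedness of the weighted singular operator under \eqref{GrindEQ__4_5_}), concludes that the coefficient maps $\{F_{n-1}^{+};F_{n}^{-}\}$ are continuous functionals on $L_{p,\rho}\left(\Gamma\right)$, and then establishes biorthogonality --- hence minimality --- by solving the problem with the special right-hand sides $A F_{p,k}^{+}$ and $B F_{p,k}^{-}$ and invoking uniqueness of the solution. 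You instead package everything as the bounded bijection $T\left(F^{+},F^{-}\right)=AF^{+}+BF^{-}$ and obtain continuity of the coefficient functionals for free from Banach's inverse-mapping theorem. Your route is shorter and sound, but it silently consumes two facts the paper's route does not: (a) $T$ must act between \emph{Banach} spaces, so you need completeness of the weighted Smirnov classes (Lemmas \ref{L3.1} and \ref{L3.2}, available here since the weight hypotheses give $\rho^{-\frac{q}{p}}\in L_{1}\left(\Gamma\right)$); and (b) your first-paragraph converse --- that an arbitrary norm-convergent double expansion of $f$ lifts back to a pair in the product space --- requires the trace subspaces to be closed in $L_{p,\rho}\left(\Gamma\right)$ and uses $\left|A\right|^{-1},\left|B\right|^{-1}\in L_{\infty}\left(\Gamma\right)$ to peel off the two sub-series; both follow from the same lemmas and condition i), but they should be stated, since this step is exactly what makes ``basis of the product space carried by $T$'' equal to ``double basis in the sense of Definition \ref{D2.3}.'' What the paper's longer argument buys in exchange is the explicit biorthogonal system $\{F_{n-1}^{+};F_{n}^{-}\}\subset\left(L_{p,\rho}\left(\Gamma\right)\right)^{*}$, which your abstract argument produces only implicitly.
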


\begin{proof} Take $f\in L_{p;\rho } \left(\Gamma\right)$  and consider the following nonhomogeneous Riemann problem 
\begin{equation} \label{GrindEQ__6_2_} 
A\left(\xi \right)F^{+} \left(\xi \right)+B\left(\xi \right)F^{-} \left(\xi \right)=f\left(\xi \right)\, ,\, \, \, \xi \in  \Gamma
\end{equation} 
in $E_{p;\rho } \left(D\right)\times _{-1} E_{p;\rho } \left(D^{-} \right)$.   We assume that the coefficients $A\left(\cdot \right)$, $B\left(\cdot \right)$ of the problem \eqref{GrindEQ__6_2_} and the weight $\rho \left(\cdot \right)$ satisfy the conditions of the Theorem \ref{T4.2}. Then, as it follows from Corollary \ref{C4.4}, nonhomogeneous problem \eqref{GrindEQ__6_2_} has a unique solution in \textit{$E_{p,\rho } \left(D^{+} \right)\times $ ${}_{-1} E_{p,\rho } \left(D^{-} \right)$  }for $\forall f\in L_{p;\rho } \left(\Gamma \right)$, which is in the form 
\begin{equation} \label{GrindEQ__6_3_} 
F\left(z\right)=\frac{Z\left(z\right)}{2\pi i} \int _{\Gamma }\frac{f\left(\xi \right)}{Z^{+} \left(\xi \right)} \frac{d\xi }{\xi -z}  ,          
\end{equation} 
where $Z\left(\cdot \right)$ is the canonical solution of the corresponding homogeneous problem.

According to the conditions of the theorem we have  
\begin{equation} \label{GrindEQ__6_5_} 
\rho _{\pm } \left(\cdot \right)\in A_{p} \left(\partial \omega \right)\, ;\, \, \rho \left(\cdot \right)\in A_{p} \left(\Gamma \right)\, ,\, \, 1<p<+\infty .         
\end{equation} 
Then by Theorem \ref{T5.2} the systems of generalized $p$-Faber polynomials  $\left\{F_{p,n}^{+} \right\}_{n\in \mathbb Z_{+} } $ and $\left\{F_{p,n}^{-} \right\}_{n\in \mathbb N} $ form  basis in weighted Smirnov spaces $E_{p,\rho } \left(D^{+} \right)$ and ${}_{-1} E_{p,\rho } \left(D^{-} \right)$, respectively.

Decompose the functions $F^{+} \left(\xi \right)$ and $F^{-} \left(\xi \right)$ in terms of the systems $\left\{F_{p,n}^{+} \right\}_{n\in \mathbb Z_{+} } $  and $\left\{F_{p,n}^{-} \right\}_{n\in \mathbb N} $, respectively
\begin{equation} \label{GrindEQ__6_6_} 
F^{+} \left(\xi \right)=\sum _{n=0}^{\infty }F_{n}^{+} F_{p,n}^{+} \left(\xi \right) , F^{-} \left(\xi \right)=\sum _{n=1}^{\infty }F_{n}^{-} F_{p,n}^{-} \left(\xi \right) ,        
\end{equation} 
where both series converge in $L_{p;\rho } \left(\Gamma\right)$. Set
\[S_{m_{1} ,m_{2} } \left(\xi \right)=A\left(\xi \right)\sum _{n=0}^{m_{1} }F_{n}^{+} F_{p,n}^{+} \left(\xi \right) +B\left(\xi \right)\sum _{n=1}^{m_{2} }F_{n}^{-} F_{p,n}^{-} \left(\xi \right) .\] 
We have
\[\left|f\left(\xi \right)-S_{m_{1} ,m_{2} } \left(\xi \right)\right|_{L_{p;\rho } \left(\Gamma\right)} =\left\| A\left(\xi \right)F^{+} \left(\xi \right)+B\left(\xi \right)F^{-} \left(\xi \right)-S_{m_{1} ,m_{2} } \left(\xi \right)\right\| _{L_{p;\rho } \left(\Gamma\right)} \le \] 
\[\le  \left\| A\left(\xi \right)\right\| _{\infty } \left\| F^{+} \left(\xi \right)-\sum _{n=0}^{m_{1} }F_{n}^{+} F_{p,n}^{+} \left(\xi \right) \right\| _{L_{p;\rho } \left(\Gamma\right)} +\] 
\[+\left\| B\left(\xi \right)\right\| _{\infty } \left\| F^{-} \left(\xi \right)-\sum _{n=1}^{m_{1} }F_{n}^{-} F_{p,n}^{-} \left(\xi \right) \right\| _{L_{p;\rho } \left(\Gamma\right)} \to 0, \] 
as $m_{1} ,\, m_{2} \to \infty $.

Hence,  $\forall f\in L_{p;\rho } \left(\Gamma\right)$ can be expanded as in \eqref{GrindEQ__6_1_}  in $L_{p;\rho } \left(\Gamma\right)$. Now, show that this expansion is unique. Consider the coefficients $\left\{F_{n}^{+} \right\}_{n\in \mathbb Z_{+} } $ of the function $F^{+} \left(\xi \right)$. They are uniquely determined by the function $F^{+} \left(\xi \right)$, i.e. they can be treated as linear functionals of $F^{+} \, :\, F_{n}^{+} =F_{n}^{+} \left(F^{+} \right)\, ,\, \forall n\in \mathbb Z_{+} $. It is clear that 
\begin{equation} \label{GrindEQ__6_7_} 
\left|F_{n}^{+} \right|\le \left\| F_{n}^{+} \right\| \, \left\| F^{+} \right\| _{L_{p;\rho } \left(\Gamma\right)},       
\end{equation} 
where $\left\| F_{n}^{+} \right\| $ are the norms of $F_{n}^{+} :\, E_{p;\rho } \left(D^{+} \right)\to C$. 

Using the Sokhotski-Plemelj formula, from \eqref{GrindEQ__6_3_} we get 
\[F^{\pm } =\pm \frac{1}{2} \frac{f\left(\tau \right)}{A\left(\tau \right)} +\left[S_{\Gamma}^{\pm } f\right]\left(\tau \right), \] 
where    
\[\left[S_{\Gamma}^{\pm } f\right]\left(\tau \right)=\frac{Z^{\pm } \left(\tau \right)}{2\pi i} \int _{\Gamma}\frac{f\left(\xi \right)}{Z^{+} \left(\xi \right)A\left(\xi \right)} \, \frac{d\xi }{\xi -\tau }  \, \, ,\, \, \tau \in \Gamma.\] 
Thus 
\[\left\| F^{+} \right\| _{L_{p;\rho } \left(\Gamma\right)} \le \frac{1}{2} \left\| A^{-1} \right\| _{\infty } \left\| f\right\| _{L_{p;\rho } \left(\Gamma\right)} +\left\| S_{}^{+} f\right\| _{L_{p;\rho } \left(\Gamma\right)} .\] 
By the results obtained in \cite{41,42,43}, if the curve $ \Gamma$ is of class $LR$, under the condition \eqref{GrindEQ__4_5_}  the operator $S_{\Gamma}^{+} $ is bounded in $L_{p;\rho } \left(\Gamma\right)$. As a result we get 

\noindent 
\[\left\| F^{+} \right\| _{L_{p;\rho } \left(\Gamma \right)} \le c\left\| f\right\| _{L_{p;\rho } \left(\Gamma\right)} \, ,\, \forall f\in L_{p;\rho } \left(\Gamma\right).\] 
Taking into account \eqref{GrindEQ__6_7_} we get 
\begin{equation} \label{GrindEQ__6_8_} 
\left|F_{n}^{+} \left(F^{+} \right)\right|\le c_{n} \left\| f\right\| _{L_{p;\rho } \left(\Gamma \right)} \, ,\, \forall f\in L_{p;\rho } \left(\Gamma\right),     
\end{equation} 
where $c_{n} $ are constants independent of $f$. It is clear that $F_{n}^{+} $ also are linear functions of $f$, since $F^{+} $ is uniquely defined by $f$. Denote by the same $F_{n}^{+} $ the generated functionals. From \eqref{GrindEQ__6_8_} we get
\[\left|F_{n}^{+} \left(f\right)\right|\le c_{n} \left\| f\right\| _{L_{p;\rho } \left(\Gamma\right)} \, ,\, \forall f\in L_{p;\rho } \left(\Gamma \right).\] 
The same reasoning is true also for the coefficients $\left\{F_{n}^{-} \right\}_{n\in \mathbb N} $, i.e. we have 
\[\left|F_{n}^{-} \left(f\right)\right|\le c_{n} \left\| f\right\| _{L_{p;\rho } \left(\Gamma \right)} \, ,\, \forall f\in L_{p;\rho } \left(\Gamma \right).\] 
Hence $\left\{F_{n-1}^{+} \, ;\, \, F_{n}^{-} \right\}_{n\in \mathbb N} $  are linear continuous functional in $L_{p;\rho } \left(\Gamma \right)$, i.e. $\left\{F_{n-1}^{+} \, ;\, \, F_{n}^{-} \right\}\subset $ $\left(L_{p;\rho } \left(\Gamma\right)\right)^{*} $.

Now we take $f\left(\xi \right)=A\left(\xi \right)F_{p,k}^{+} \left(\xi \right)$ for any fixed $k\in \mathbb Z_{+} $ and consider the boundary value problem 
\[A\left(\xi \right)F^{+} \left(\xi \right)+B\left(\xi \right)F^{-} \left(\xi \right)=A\left(\xi \right)F_{p,k}^{+} \left(\xi \right)\, ,\, \, \xi \in \Gamma ,\] 
in $E_{p;\rho } \left(D^{+} \right)\times _{-1} E_{p;\rho } \left(D^{-} \right)$. It is clear that the pair $\left(F^{+} \left(z\right)\, ;\, F^{-} \left(z\right)\right)$:
\[\left\{\begin{array}{l} {F^{+} \left(z\right)\equiv F_{p,k}^{+} \left(z\right)\, ,\, z\in D\, ,} \\ {F^{-} \left(z\right)\equiv 0\, ,\, \, z\in D^{-} \, .} \end{array}\right. \] 
Is a solution of above problem. Compare it with the solution \eqref{GrindEQ__6_6_}, by the uniqueness result we get that 
\[F_{n}^{+} =\delta _{nk} \, ,\, \, \forall n\in \mathbb Z_{+} \, ;\, \, F_{m}^{-} =0\, ,\, \, \forall m\in \mathbb N,\] 
i.e.
\[F_{n}^{+} \left(A\left(\xi \right)F_{p,k}^{+} \left(\xi \right)\right)=\delta _{nk} \, ,\, \, \forall n,k\in \mathbb Z_{+} ;\] 
\[F_{m}^{-} \left(A\left(\xi \right)F_{p,k}^{+} \left(\xi \right)\right)=0\, ,\, \, \forall m\in \mathbb N,\, \forall k\in \mathbb Z_{+} ,\] 
where $\delta _{nk} $ is the Kronecker delta.

 By the same way we can write
\[F_{n}^{+} \left(B\left(\xi \right)F_{p,k}^{-} \left(\xi \right)\right)=0\, ,\, \, \forall n,\in \mathbb Z_{+} \, ,\forall k\in \mathbb N;\] 
\[F_{m}^{-} \left(B\left(\xi \right)F_{p,k}^{-} \left(\xi \right)\right)=\delta _{nk} \, ,\, \, \forall n,k\in \mathbb N.\] 
From these equalities it follows that the system \eqref{GrindEQ__6_1_} is minimal in $L_{p;\rho } \left(\Gamma \right)$, and as a result $\forall f\in L_{p;\rho } \left(\Gamma \right)$  has a unique expansion. Theorem is proved. 
\end{proof}

Consider the following special case

\[F_{p,n} \left(\xi \right)=\left\{\begin{array}{l} {F_{p,n}^{+} \left(\xi \right)\, ,\, n\in \mathbb Z_{+} ,} \\ {\, } \\ {F_{p,n}^{-} \left(\xi \right)\, ,\, \, -n\in \mathbb N\, ,} \end{array}\right. \] 
and consider the system 
\begin{equation} \label{GrindEQ__6_9_} 
\left\{e^{i\alpha \arg \xi \, sign\, n} \, F_{p,n} \left(\xi \right)\right\}_{n\in \mathbb Z},       
\end{equation} 
where $\alpha \in \mathbb R$ is a real parameter. Theorem \ref{T6.1} immediately  implies the following 

\begin{cor}\label{C6.1} Let the curve $\Gamma $ is of class $LR$; $0\in int \Gamma$ and the weights $\rho \left(\cdot \right)$;$\rho _{\pm } \left(\cdot \right)$ satisfy the conditions of Theorem \ref{T6.1}. If $-\frac{1}{2q} <\alpha <\frac{1}{2p} $, then the system \eqref{GrindEQ__6_9_} forms a basis in $L_{p;\rho } \left(\Gamma \right), 1<p<+\infty$.
\end{cor}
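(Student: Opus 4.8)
\emph{Proof idea.} The plan is to read off the system \eqref{GrindEQ__6_9_} as the special case of the double system \eqref{GrindEQ__6_1_} obtained by taking
\[
A(\xi)=e^{i\alpha\arg\xi},\qquad B(\xi)=e^{-i\alpha\arg\xi},
\]
and then to invoke Theorem \ref{T6.1}. With this choice, for $n\ge 1$ the member $A(\xi)F_{p,n}^{+}(\xi)$ of \eqref{GrindEQ__6_1_} is exactly the index-$n$ term of \eqref{GrindEQ__6_9_}, and for $k\ge 1$ the member $B(\xi)F_{p,k}^{-}(\xi)$ is its index-$(-k)$ term; the only discrepancy is at $n=0$, where \eqref{GrindEQ__6_9_} carries the factor $e^{i\alpha\arg\xi\cdot\mathrm{sign}\,0}=1$ whereas Theorem \ref{T6.1} furnishes $A(\xi)F_{p,0}^{+}=e^{i\alpha\arg\xi}$. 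Since these two functions differ only by the bounded, unimodular factor $e^{i\alpha\arg\xi}$, I would dispose of this single term by a one-element (rank-one) adjustment of the basis, which does not disturb the basis property once the relevant coefficient functional is seen to be nonzero on the constant. Thus the whole problem reduces to verifying that the above $A,B$ fulfil the hypotheses of Theorem \ref{T6.1}.

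First I would check conditions i)--iii). As $|A(\xi)|\equiv|B(\xi)|\equiv 1$, condition i) is immediate. For ii), the phases are $\alpha(\xi)=\alpha\arg\xi$ and $\beta(\xi)=-\alpha\arg\xi$; since $0\in\mathrm{int}\,\Gamma$, a continuous branch of $\arg\xi$ grows by $2\pi$ over one positively oriented circuit of $\Gamma$, so both phases, and hence $\theta(\xi)=\beta(\xi)-\alpha(\xi)=-2\alpha\arg\xi$, are piecewise continuous with a single jump at the start/end point $a=z(0)$ and no interior jumps ($r=0$). Writing $\Omega(s)=\theta(z(s))$, the endpoint jump is
\[
h_{0}=\Omega(+0)-\Omega(S-0)=2\alpha\bigl(\arg z(S-0)-\arg z(+0)\bigr)=4\pi\alpha .
\]
Condition iii) is assumed in the statement.

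The hard part will be to show that the main condition $\alpha)$ holds for this configuration precisely when $-\frac{1}{2q}<\alpha<\frac{1}{2p}$. The membership conditions \eqref{GrindEQ__4_5_} for the weight (together with the $A_{p}$ memberships of $\rho_{\pm}$) are inherited from the hypotheses, because with $r=0$ the coefficients \eqref{GrindEQ__4_4_} receive no contribution from $\theta$ and reduce to the weight exponents themselves. It remains to handle $\alpha)$, where the endpoint jump now enters through $\sigma(s)=|z(0)-z(s)|^{-h_{0}/(2\pi)}=|z(0)-z(s)|^{-2\alpha}$, which behaves like $|s|^{-2\alpha}$ near $s=0$ and like $|S-s|^{-2\alpha}$ near $s=S$. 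Because the weight points $\{t_{k}\}\subset(0,S)$ are disjoint from $a$, I would split each integral in $\alpha)$ into a part near $a$ and parts near the $t_{k}$, the latter being controlled exactly as in Corollary \ref{C4.3} by the assumed \eqref{GrindEQ__4_5_}. Near $a$ the two integrands behave like $|s|^{-2\alpha pp_{1}}$ and $|s|^{2\alpha qp_{2}}$, which, for $p_{1},p_{2}>1$ chosen close enough to $1$, are integrable if and only if $\alpha<\frac{1}{2p}$ and $\alpha>-\frac{1}{2q}$, respectively. This is the crux: matching the computed jump $h_{0}=4\pi\alpha$ against the exponents of $\alpha)$ shows that the admissible interval is exactly $\left(-\frac{1}{2q},\frac{1}{2p}\right)$. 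With $\alpha)$ established, Theorem \ref{T6.1} gives that \eqref{GrindEQ__6_1_} with the chosen $A,B$ is a basis of $L_{p;\rho}(\Gamma)$, and the $n=0$ adjustment then finishes the proof that \eqref{GrindEQ__6_9_} is a basis.
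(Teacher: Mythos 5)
Your route is the paper's route made explicit: the paper proves this corollary only by the words ``Theorem \ref{T6.1} immediately implies,'' and what that means is precisely your specialization $A(\xi)=e^{i\alpha\arg\xi}$, $B(\xi)=e^{-i\alpha\arg\xi}$, for which $\theta(\xi)=-2\alpha\arg\xi$ has no interior discontinuities and, since $0\in \mathrm{int}\,\Gamma$, a single endpoint jump $h_{0}=4\pi\alpha$ --- your computation of $h_0$ is correct. Your verification of condition $\alpha$) directly (endpoint factor $\sigma(s)\sim|s|^{-2\alpha}$, exponents $-2\alpha pp_{1}$ and $2\alpha qp_{2}$, with $p_{1},p_{2}$ near $1$) yields exactly $-\frac{1}{2q}<\alpha<\frac{1}{2p}$, and it is equivalent to the bookkeeping the paper intends: the endpoint jump contributes $\beta=-\frac{p}{2\pi}h_{0}=-2p\alpha$ in \eqref{GrindEQ__4_4_}, and \eqref{GrindEQ__4_5_}, i.e.\ $-1<-2p\alpha<\frac{p}{q}$, is literally the stated interval, with Example 4.1 then supplying $\alpha$). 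One small misstatement: your claim that with $r=0$ the $\beta_{k}$ ``receive no contribution from $\theta$'' follows the letter of \eqref{GrindEQ__4_4_} (whose sum starts at $i=1$) but not its intent --- the endpoint jump $h_{0}$ is meant to enter at $\tau=s_{0}$; this is harmless here, since your $\alpha$)-analysis captures exactly that contribution.

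The genuine gap is the $n=0$ repair, which is also the one place you go beyond the paper. You rightly notice (the paper silently elides this) that with $\mathrm{sign}\,0=0$ the $n=0$ member of \eqref{GrindEQ__6_9_} is $F_{p,0}^{+}\equiv 1$, whereas Theorem \ref{T6.1} furnishes the basis element $e^{i\alpha\arg\xi}$. But your ``one-element (rank-one) adjustment'' is only named, never executed. The relevant lemma is: if $\{x_{n}\}$ is a basis with coefficient functionals $\{x_{n}^{*}\}$, then replacing $x_{0}$ by $y$ again gives a basis if and only if $x_{0}^{*}(y)\neq 0$. Here $y\equiv 1$, and $x_{0}^{*}(1)$ is the coefficient of $e^{i\alpha\arg\xi}F_{p,0}^{+}$ in the expansion of the constant function, i.e.\ the $0$-th Faber coefficient of $F^{+}$, where $\left(F^{+},F^{-}\right)$ is the solution \eqref{GrindEQ__6_3_} of $e^{i\alpha\arg\xi}F^{+}+e^{-i\alpha\arg\xi}F^{-}=1$; equivalently, the value at $w=0$ of $\left(T_{p}^{+}\right)^{-1}F^{+}$. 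Its nonvanishing is a concrete assertion about a Cauchy-type integral built from the canonical solution $Z$, and nothing in your sketch establishes it; ``once the relevant coefficient functional is seen to be nonzero on the constant'' is exactly the unproven step. So as written your argument proves basisness of the specialized system \eqref{GrindEQ__6_1_} but not of \eqref{GrindEQ__6_9_}; to match the corollary you must either verify this nondegeneracy or read \eqref{GrindEQ__6_9_} with the convention that the $n=0$ term also carries the factor $e^{i\alpha\arg\xi}$, which is evidently what the paper's ``immediately'' presumes.
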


 ------------------------------------------------------------------------



\begin{thebibliography}{1}

\bibitem{1} V.I. Smirnov and N.A. Lebedev, The constructive theory of functions of a complex variable, "Nauka", Moscow, 1964; English transl., Functions of a complex variable. Constructive theory, M.I.T. Press Cambridge, Mass., 1968


\bibitem{2} D. Gayer, Lectures on Approximation Theory in the Complex Domain (Mir, Moscow, 1986).


\bibitem{3} V. M. Kokilashvili, “Approximation in the mean of analytic functions of class Ep”, Dokl. Akad. Nauk SSSR, 177:2 (1967), 261–264


\bibitem{4} Andersson J.E. On the degree of polynomial approximation in $E^{p} \left(D\right)$, J. Approximation Theory, 19(1977), No 1, 61-68

\bibitem{5} Bilalov B.T., Najafov T.I. On basicity of systems of generalized Faber polynomials.  Jaen J. Approx.~   Volume 5, Number 1 (2013), pp. 19-34

\bibitem{6} Israfiliv D.M., Tozman P.N.\textbf{ }Approximation in Morrey-Smirnov classes, Az. J. of Mathematics, vol. 1, No1, 2011, pp. 99-113

\bibitem{7} A.V. Bitsadze, “On a system of functions”, Uspekhi Mat. Nauk, 5:4(38) (1950), 154–155 

\bibitem{8} S.M. Ponomarev, On one eigenvalue problem, DAN SSSR,  (1979),  vol. 249, No 5, pp. 1068-1070

\bibitem{9} S.M. Ponomarev,  S.M. Ponomarev, On the theory of boundary value problems for equations of mixed type in three-dimensional domains, Dokl. Akad. Nauk SSSR, 246(6), 1979, pp. 1303-1304

\bibitem{10} E.I. Moiseev, Some boundary value problems for mixed-type equations, Diff. Equations, 28(1), 1992, 105-115.

\bibitem{11} E.I. Moiseev, Solution of the Frankl problem in a special domain, Diff. Uravn., 28(4), 1992, 721-723. 

\bibitem{12} E.I. Moiseev, On existence and uniqueness of solution a classical problem, Dokl. RAN, 336(4), 1994, 448-450. 

\bibitem{13} E.I. Moiseev, On basicity of systems of sines and cosines, DAN SSSR, 275(4), 1984, 794-798. 

\bibitem{14} E.I. Moiseev, On basicity of a system of sines, Diff. Uravn., 23(1), 1987, 177-179.

\bibitem{15} E.I. Moiseev, On basicity of systems of sines and cosines in weighted space, Diff. Uravn., 34(1), 1998, 40-44.

\bibitem{16} E.I. Moiseev, Basicity of a system of eigenfunctions of a differential operator in a weighted space, Diff. Uravn., 35(2), 1999, 200-205.

\bibitem{17} Bilalov B.T. Basicity of some exponents, cosines and sines systems, \textit{Differ. Uravneniya, }~26 (1990): 10-16.

\bibitem{18} Bilalov B.T. On the basicity of the system $ei^{nx} sin nx$ and exponents with shear, \textit{Dokl. RAN}. Vol. 345, No 2, 1995, pp. 644-647

\bibitem{19}  B. T. Bilalov, The basis properties of some systems of exponential functions, cosines, and sines, Siberian Math. J., 45 (2004), no. 2, 214–221 (translated from Sibirsk. Mat. Zh., 45 (2004), no. 2, 264–273).


\bibitem{20} B. T. Bilalov, The basis properties of power systems in Lp, Siberian Math. J., 47 (2006), no. 1, 18–27 (translated from Sibirsk. Mat. Zh., 47 (2006), no. 1, 25–36).


\bibitem{21} Bilalov B.T. A system of exponential functions with shift and the Kostyuchenko problem, Siberian Mathematical Journal,  2009,~Volume 50,~Issue~2,~pp 223--230

\bibitem{22} Bilalov B.T. On solution of the Kostyuchenko problem, Siberian Mathematical Journal.,~\textbf{53}:3 (2012),~~pp. 509--526

\bibitem{23} Bilalov B.T., Guseynov Z.G. Basicity of a system of exponents with a piece-wise linear phase in variable spaces, Mediterr. J. Math.,  vol. 9, no3 (2012), 487--498

\bibitem{24} Bilalov B.T., Guseynov Z.G.  $K$-Bessel and   $K$-Hilbert system. $K$-Bases, Dokl. RAN, 2009, vol.429, No. 3, pp.1-3

\bibitem{25} Sadigova S.R. On one method for establishing a basis from double systems of exponentials,  Applied Mathematics Letters, Elsevier, v.24, Issue 12, December 2011, pp. 1969-1972

\bibitem{26} Najafov T.I., Nasibova N.P.  On the Noetherness  of the Riemann problem in a generalized weighted Hardy classes,  Azerbaijan Journal of Mathematics, vol. 5, NO 2 (2015): JULY, pp.109-139.

\bibitem{27} Bilalov B.T., Sadigova S.R. Frame properties of the part of a system of exponents with degenerate coefficient in Hardy classes, \textit{Georgian Mathematical Journal}, 2017.

\bibitem{28} Huseynli A.A., Mirzoyev V.S., Quliyeva A.A. On Basicity of the Perturbed System of Exponents in Morrey-Lebesgue Space,  Azerbaijan Journal of Mathematics V. 7, No 2, 2017, pp. 197-216

\bibitem{29} Shukurov A.Sh., N.A. Ismailov N.A. On the Minimality of Double Exponential System in Weighted Lebesgue Space,  Azerbaijan Journal of Mathematics V. 7, No 1, 2017, pp. 120-129

\bibitem{30} Guliyeva F.A., Sadigova S.R. Bases of the perturbed system of exponents in generalized weighted Lebesgue space with a general weight,  Afrika Matematika, Volume 28,~Issue~5--6, 2017, pp 781--791: doi:10.1007/s13370-017-0488-6.

\bibitem{31} Sadigova S.R. The general solution of the homogeneous Riemann problem in the weighted Smirnov classes, Proceedings of the Institute of Mathematics and Mechanics, National Academy of Sciences of Azerbaijan, Volume 40, Number 2, 2014, pp. 115--124

\bibitem{32} V. Kokilashvili, V. Paatashvili, S. Samko, Boundary value problems for analytic functions in the class of Caushy type integrals with density in  $L^{p\left(\cdot \right)} \left(\Gamma \right)$, Bound. Value Probl., (1-2), 2005, 43-71.

\bibitem{33} Kokilashvili V., Paatashvili V. The Riemann-Hilbert problem in a domain with piece-wise smooth boundaries in weight of Cauchy type integrals with a density from variable exponent Lebesgue  spaces, Geor. Math.J., 16, 2009, No4, pp. 737-755.

\bibitem{34} Meshveliani Z.  The Riemann-Hilbert problem in weighted Smirnov classes of analytic functions,  Proc. A. Razmadze Math. Inst., 137(2005), 65-86 

\bibitem{35} G. Manjavidze and N. Manjavidze, Boundary-value problems for analytic and generalized analytic functions, Journal of Mathematical Sciences, 160 (2009), no. 6, 745--821. 

\bibitem{36} S.B. Klimentov, Riemann and Hilbert boundary value problems in BMO classes for holomorphic functions. Vladikavkaz Math. Journal,  2010, v.12, issue. 4, pp. 28-38.

\bibitem{37} Muckenhoupt B. Weighted norm inequalities for the hardy maximal function. Transactions of the American Mathematical Society,  Volume 165, March 1972, pp. 207-226

\bibitem{38} Hunt R.A., Young Wo-Sang, A weighted norm inequality for Fourier series, Bulletin of the American Mathematical Society,  Volume 80, Number 2, March 1974, pp. 274-277

\bibitem{39} I.I. Danilyuk, Nonregular boundary value problems in the plane, Nauka, Moscow, 1975 (in Russian).

\bibitem{40}  P.K. Suetin, Series in Faber Polynomials, Nauka, Moscow, 1984 (in Russian).


\bibitem{41} E.M. Dynkin, Uniform approximation of function in Jordan domains, Sibirsk. Math. Zh, 1977, vol. 18, pp. 775-786.

\bibitem{42} E.M. Dynkin, Methods of theory of singular integrals, Itogi nauki i tekhniki., Ser. mat. anal., 15 (1987), 197–292.

\bibitem{43} David G. \textit{Operateurs integraux singulars sur certains courbes du plan complexe}, Ann. Sci. Ecole Norm. Sup., 1984, 17, pp. 157-189.

\bibitem{44} B.T. Bilalov, S.R. Sadigova, \textit{Frame properties of a part of exponential system with degenerate coecients in Hardy classes}, Georgian Mathematical Journal,  vol. 23 (2017)

\bibitem{45} G. M. Goluzin, Geometric Theory of Functions of a Complex Variable, Nauka, Moscow,  1966, 626 p.

\bibitem{46} Sadigova S.R., Guliyeva A.E. \textit{On the solvability of Riemann-Hilbert problem in the weighted Smirnov classes}, Analysis Math. (2018) 44: 587-603. https://doi.org/10.1007/s10476-018-0309-7
\end{thebibliography}
\end{document}